\newtheorem{theorem}{Theorem}
\newtheorem{lemma}[theorem]{Lemma}
\newtheorem{corollary}[theorem]{Corollary}
\newtheorem{proposition}[theorem]{Proposition}
\newtheorem{question}[theorem]{Question}
\newtheorem{lettertheorem}{Theorem}
\newtheorem{letterlemma}[lettertheorem]{Lemma}
\theoremstyle{definition}
\theoremstyle{remark}
\numberwithin{equation}{section}
\newcommand{\abs}[1]{\lvert#1\rvert}
\newcommand{\nm}[1]{\lVert#1\rVert}
\newcommand{\B}{\mathcal{B}}
\newcommand{\D}{\mathbb{D}}
\newcommand{\DD}{\widehat{\mathcal{D}}}
\newcommand{\Dd}{\widecheck{\mathcal{D}}}
\newcommand{\DDD}{\mathcal{D}}
\newcommand{\N}{\mathbb{N}}
\newcommand{\RR}{\mathbb{R}}
\newcommand{\ZZ}{\mathcal{Z}}
\newcommand{\C}{\mathbb{C}}
\newcommand{\e}{\varepsilon}
\renewcommand{\phi}{\varphi}
\newcommand{\A}{\mathcal{A}}
\newcommand{\T}{\mathbb{T}}
\newcommand{\whw}{\widehat{\omega}}
\def\BMOA{\mathord{\rm BMOA}}
\def\HL{\mathord{\rm HL}}
           \def\e{\varepsilon}
     \def\om{\omega}      
       \def\t{\theta}       
                  \def\z{\zeta}
                  \def\vp{\varphi}
\def\G{\Gamma}
\DeclareMathOperator{\supp}{supp}
\renewcommand{\H}{\mathcal{H}}
\begin{document}
\title{Volterra-type operators mapping weighted Dirichlet space into $H^\infty$}

\subjclass[2010]{Primary 30H20, 47B35}

\keywords{Bergman space, Bloch space, $\BMOA$, Dirichlet space, dual space, Hardy space, integral operator, Volterra-operator, Zygmund space. }


\author{Jos\'e \'Angel Pel\'aez}
\address{Departamento de An\'alisis Matem\'atico, Universidad de M\'alaga, Campus de
Teatinos, 29071 M\'alaga, Spain} \email{japelaez@uma.es}

\author{Jouni R\"atty\"a}
\address{University of Eastern Finland, P.O.Box 111, 80101 Joensuu, Finland}
\email{jouni.rattya@uef.fi}

\author{Fanglei Wu}
\address{University of Eastern Finland, P.O.Box 111, 80101 Joensuu, Finland}
\email{fanglei.wu@uef.fi; fangleiwu1992@gmail.com}

\thanks{The research was supported in part by Ministerio de Econom\'{\i}a y Competitividad, Spain, projects PGC2018-096166-B-100; La Junta de Andaluc{\'i}a, projects FQM210 S and UMA18-FEDERJA-002, and Vilho, Yrj\"o ja Kalle V\"ais\"al\"a foundation of Finnish Academy of Science and Letters.}

\begin{abstract}
The problem  of describing the analytic functions  $g$ on the unit disc such that the integral operator $T_g(f)(z)=\int_0^zf(\z)g'(\z)\,d\z$
is bounded (or compact) from a Banach space (or complete metric space) $X$ of analytic functions to the Hardy space $H^\infty$ is a tough problem, and remains unsettled in many cases. For analytic functions $g$ with non-negative Maclaurin coefficients, we describe the boundedness and compactness
of $T_g$ acting from a weighted Dirichlet space $D^p_\omega$, induced by an upper doubling weight $\omega$, to $H^\infty$.
We also characterize, in terms of neat conditions on $\omega$, the upper doubling weights for which $T_g: D^p_\omega\to H^\infty$ is bounded (or compact) only if $g$ is constant.
\end{abstract}

\maketitle

\section{Introduction and main results}

Let $\H(\D)$ denote the space of analytic functions in the unit disc $\D=\{z\in\C:|z|<1\}$. Each $g\in\H(\D)$ induces the integral operator, also called Volterra-type operator, defined by
	$$
	T_g(f)(z)=\int_0^zf(\zeta)g'(\zeta)\,d\zeta,\quad z\in\D.
	$$
The study of this operator has attracted a substantial amount of attention during the last decades since the publications of the seminal works~\cite{AC,AS,Pom}.
Characterizing the operators $T_g$, mapping a Banach space (or complete metric space) $X\subset\H(\D)$ boundedly or compactly to the Hardy space $H^\infty$, in terms of a reasonable condition depending on the symbol $g$ only is known to be difficult, and remains unsettled in many cases~\cite{AJS,CPPR,SSVAdv2017}. Consequently, restricting $g$ to some subclass of $\H(\D)$, sets a natural approach to the problem. In \cite{SSVAdv2017} the authors described the univalent symbols $g$ such that $T_g: H^\infty \to H^\infty$ is bounded. Further, the functions $g\in \H(\D)$ with non-negative Taylor coefficients such that $T_g(H^p)\subset H^\infty$ were described in \cite{PRW}. In this paper, we are mainly interested in the situation where $X$ is a weighted Dirichlet space and, in some of the results, the symbol $g$ has non-negative Maclaurin coefficients.

For a non-negative function $\om\in L^1([0,1))$, its extension to $\D$, defined by $\om(z)=\om(|z|)$ for all $z\in\D$, is called a radial weight. For $0<p<\infty$ and such an $\omega$, the Berman space $A^p_\om$ consists of $f\in\H(\D)$
such that
    $$
    \|f\|_{A^p_\omega}^p=\int_\D|f(z)|^p\omega(z)\,dA(z)<\infty,
    $$
where $dA(z)=\frac{dx\,dy}{\pi}$ is the normalized Lebesgue area measure on $\D$.
The corresponding weighted Dirichlet space is
	$$
	D^p_\omega=\left\{f\in\H(\D):\left\|f\right\|_{D^p_\omega}^p=\|f'\|_{A^p_\omega}^p+|f(0)|^p<\infty\right\}.
	$$
The classical weighted Dirichlet space $D^p_{\alpha}$ is, by definition, equal to $D^p_\omega$, induced by the standard radial weight $\om(z)=(\alpha+1)(1-|z|^2)^{\alpha}$, where $-1<\alpha<\infty$. Throughout this paper we assume $\widehat{\om}(z)=\int_{|z|}^1\om(s)\,ds>0$ for all $z\in\D$, for otherwise $D^p_\omega=\H(\D)=A^p_\om$.

A radial weight $\om$ belongs to the class~$\DD$ if there exists a constant $C=C(\om)\ge1$ such that the tail integral $\widehat{\om}$ satisfies the doubling condition $\widehat{\om}(r)\le C\widehat{\om}(\frac{1+r}{2})$ for all $0\le r<1$. Moreover, if there exist constants $K=K(\om)>1$ and $C=C(\om)>1$ such that $\widehat{\om}(r)\ge C\widehat{\om}\left(1-\frac{1-r}{K}\right)$ for all $0\le r<1$, then we write $\om\in\Dd$. Finally, the intersection $\DD\cap\Dd$ is denoted by $\DDD$.

It is worth noticing that, if there exists $\nu\in\DDD$ such that
	\begin{equation}\label{eq:intro1}
	\omega(z)\asymp\nu(z)(1-|z|)^{p},\quad z\in\D,
	\end{equation}
then $D^p_{\om}$ coincides with the weighted Bergman space $A^p_\nu$ by \cite[Theorem~5]{PelRat2020}. However,
$\widehat{\omega}(z)$ may decrease to zero arbitrarily slowly, as $z$ approaches to the boundary, and therefore \eqref{eq:intro1} may very well fail. Typical examples of weights violating \eqref{eq:intro1} are $\om(z)=(\alpha+1)(1-|z|^2)^{\alpha}$ with $-1<\alpha<p-1$. Moreover, weights in $\DD$ may have a wild oscillatory behavior and they may even vanish on sets that are not hyperbolically uniformly bounded. In these cases \eqref{eq:intro1} also certainly fails. Illuminating examples of weights in the deceivingly simply looking class~$\DD$ are given in~\cite[Proposition~10]{PR2020} and~\cite[Proposition~12]{PRBMO22}.

We begin with considering the question of when $T(D^p_{\om}, H^{\infty})$ consists of constant functions only, provided $\om\in\DD$. From now on, if $X\subset\H(\D)$ is a Banach space (or complete metric space), $T(X,H^\infty)$ (resp. $T_c(X,H^\infty)$) denotes the set of $g\in\H(\D)$ such that $T_g:X\to H^\infty$ is bounded (resp. compact). It is known that $T(H^p,H^\infty)$ consists of constant functions only, if only if, $0<p<1$, by \cite[Theorem~2.5(vi)]{CPPR}. Therefore each $T(A^p_\om,H^\infty)$ contains constant functions only, provided $0<p<1$ and $\omega$ is any radial weight. As expected, the situation is different in the case of the weighted Dirichlet space $D^p_\omega$. The first result of this paper characterizes the triviality of $T(D^p_{\om}, H^{\infty})$ and $T_c(D^p_{\om}, H^{\infty})$ in terms of a neat condition on $p$ and $\om$.

\begin{theorem}\label{intro:0<p<1}
Let $0<p\le1$, $\om\in\DD$ and $g\in H^{\infty}$. Then the following statements are equivalent:
\begin{itemize}
\item[(i)] $T(D^p_{\om}, H^{\infty})$ consists of constant functions only;
\item[(ii)] $\displaystyle \sup_{0\leq r<1}\frac{(1-r)^{2-\frac1p}}{\whw(r)^{\frac1p}}=\infty$;
\item[(iii)] $I:D^p_{\om}\rightarrow D^1_0$ is unbounded.
\end{itemize}
Moreover,
\begin{itemize}
\item[(i)] $T_c(D^p_{\om}, H^{\infty})$ consists of constant functions only;
\item[(ii)] $\displaystyle \sup_{0\leq r<1}\frac{(1-r)^{2-\frac1p}}{\whw(r)^{\frac1p}}>0$;
\item[(iii)] $I:D^p_{\om}\rightarrow D^1_0$ is not compact.
\end{itemize}
\end{theorem}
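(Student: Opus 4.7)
The plan is to prove the two equivalence chains in parallel, using a common family of test functions in $D^p_\om$ to realise the ratio appearing in (ii). For $a\in\D$ and $M=M(p,\om)$ taken sufficiently large, define $f_a(z)=\int_0^z (1-\overline{a}w)^{-M}\,dw$. The standard integral estimate for $\om\in\DD$,
\[
\int_\D \frac{\om(w)}{|1-\overline{a}w|^{Mp}}\,dA(w)\asymp \frac{\whw(a)}{(1-|a|)^{Mp-1}},
\]
gives $\|f_a\|_{D^p_\om}^p\asymp \whw(a)(1-|a|)^{1-Mp}$ and $\|f_a\|_{D^1_0}\asymp (1-|a|)^{2-M}$, so after normalisation $\widetilde f_a=f_a/\|f_a\|_{D^p_\om}$ one has
\[
\|\widetilde f_a\|_{D^1_0}\asymp \frac{(1-|a|)^{2-1/p}}{\whw(a)^{1/p}},
\]
and $\widetilde f_a\to 0$ uniformly on compacta as $|a|\to 1$, so $\{\widetilde f_a\}$ is weakly null in $D^p_\om$. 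This matches the supremum in (ii) in both lists, so $(ii)\Leftrightarrow(iii)$ follows in each case: in the first list by identifying the operator norm of $I$, and in the Moreover part because the weakly null family with target norms bounded below prevents compactness, while vanishing of the ratio at the boundary yields compactness via a standard tightness argument that leverages $\om\in\DD$.

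For $(iii)\Rightarrow(i)$ in both lists, I would prove the lower bound
\[
\|T_g\widetilde f_a\|_{H^\infty}\ge \left|\int_0^a \widetilde f_a(\zeta)g'(\zeta)\,d\zeta\right|\gtrsim_g \|\widetilde f_a\|_{D^1_0}
\]
for every non-constant $g\in H^\infty$ and $a$ in a suitable region approaching $\partial\D$. The prototype is $g(z)=z$, where a direct computation of $|T_z\widetilde f_a(a)|$ recovers exactly $(1-|a|)^{2-1/p}/\whw(a)^{1/p}\asymp \|\widetilde f_a\|_{D^1_0}$, so that if $I$ is unbounded (resp.\ not compact) every non-constant $g$ gives an unbounded (resp.\ non-compact) $T_g$. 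For the reverse $(i)\Rightarrow(iii)$, assuming $I$ is bounded (resp.\ compact), I would exhibit an explicit non-constant $g\in H^\infty$ with $T_g:D^p_\om\to H^\infty$ bounded (resp.\ compact); the growth estimate $|f'(z)|\lesssim\|f\|_{D^p_\om}\,\whw(z)^{-1/p}(1-|z|)^{-1/p}$ combined with $\whw(r)\gtrsim (1-r)^{2p-1}$ (the quantitative form of (ii) in this case) should allow control of $\int_0^z fg'$ for a suitably chosen symbol with $g'$ exhibiting controlled boundary decay.

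The main obstacle is the lower bound above for an arbitrary non-constant $g\in H^\infty$: $|g'(a)|$ may decay or oscillate as $a\to\partial\D$ along the radial direction of $\widetilde f_a$ (as for Blaschke products with densely distributed zeros). I would address this by restricting the approach sequence $a_n\to\partial\D$ to a region where $|g'|$ is bounded below — such a region exists for any non-constant $g\in H^\infty$ — and using the regularity of $(1-r)^{2-1/p}/\whw(r)^{1/p}$ on it granted by $\om\in\DD$. A secondary subtlety is to read the same test-function computation both as a uniform bound (for the first list) and as a boundary-asymptotic bound (for the Moreover list).
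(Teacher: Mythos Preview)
Your treatment of $(ii)\Leftrightarrow(iii)$ via the test functions $f_a$ and of the easy direction (if $I$ is bounded or compact then some non-constant $g$ works) is reasonable; the paper obtains $(ii)\Leftrightarrow(iii)$ by quoting Carleson embedding theorems for weighted Bergman spaces, and handles the easy direction in the same spirit, factoring $T_g$ through $D^1_0$ via the elementary estimate $\|T_gf\|_{H^\infty}\lesssim\|g'\|_{H^\infty}\|f\|_{D^1_0}$.

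The genuine gap is in the hard direction $(iii)\Rightarrow(i)$, where you must show that \emph{every} non-constant $g\in H^\infty$ yields an unbounded (resp.\ non-compact) $T_g$. Your lower bound $|T_g\widetilde f_a(a)|\gtrsim_g\|\widetilde f_a\|_{D^1_0}$ is checked for $g(z)=z$ but is unjustified in general, and the proposed fix does not close it. First, there is no reason a non-constant $g\in H^\infty$ must have $|g'|$ bounded below on some region approaching $\partial\D$. Second, even granting $|g'(a)|\ge c>0$, the line integral $\int_0^a f_a\, g'$ can suffer cancellation from the behaviour of $g'$ along the whole segment $[0,a]$: splitting $g'=g'(a)+(g'-g'(a))$ produces an error $\int_0^a f_a\,(g'-g'(a))$ that you can control only under an extra hypothesis such as $g''\in H^\infty$, which is unavailable for arbitrary $g\in H^\infty$. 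Nothing in your outline supplies a mechanism valid for general symbols.

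The paper's route for this direction is structurally different and avoids test functions altogether. It invokes an abstract criterion: $T_g:D^p_\om\to H^\infty$ is bounded if and only if $\sup_{z\in\D}\|G_{g,z}\|_{(D^p_\om)^\star}<\infty$, where $G_{g,z}$ is built from the reproducing kernel of a suitable Hilbert space $D^2_W$. The dual $(D^p_\om)^\star$ is identified with the Zygmund space $\ZZ$ for $0<p<1$ and with a weighted $\BMOA$-type space for $p=1$, via carefully chosen pairings. The decisive step handling an arbitrary non-constant $g$ is an application of Hardy's inequality on the Fourier side: it converts the dual norm of $G_{g,z}$ into a lower bound containing the factor $|\widehat g(N+1)|$ for any fixed $N$, so that a single non-zero Maclaurin coefficient of $g$ already forces divergence when condition~(ii) holds. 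This coefficient-extraction device is precisely what your test-function scheme lacks.
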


Probably the most important part of the theorem is the surprising equivalence between the behavior of the embedding $D^1_0\subset D^p_{\om}$ and that of the integration operator $T_g:D^p_{\om}\to H^{\infty}$. The true difficulty in the proof lies in showing that (i) implies (ii), because it is not that complicated to show that (iii) implies (i), and the equivalence between (ii) and (iii) is just the Carleson embedding theorem~\cite[Theorem~1]{PR2015}. Before further comments on the proof, we state the result concerning the range $1<p<\infty$. We underline here that when $1<p<\infty$ there is no such neat connection to embeddings as in the case $0<p\le1$, as is shown in Section~\ref{Sec:p>1}, see Theorem~\ref{Dpconstant} and the discussion presiding it.

\begin{theorem}\label{intro:1p<infinity}
Let $1<p<\infty$, $\om\in\DD$ and $g\in H^{\infty}$. Then $T(D^p_{\om}, H^{\infty})$ (equivalently $T_c(D^p_{\om}, H^{\infty})$) consists of constant functions only, if and only if,
	$$
	\int_0^1\frac{(1-r)^{p'}}{\widehat{\om}(r)^{p'-1}}\,dr=\infty.
	$$
\end{theorem}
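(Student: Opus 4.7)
Plan: I aim to deduce the theorem from a characterization of the boundedness of $T_g:D^p_\omega\to H^\infty$ valid for arbitrary $g\in H^\infty$, which I expect to read
\begin{equation*}
T_g:D^p_\omega\to H^\infty\text{ is bounded}\iff\int_0^1 M_\infty(r,g')^{p'}\,\frac{(1-r)^{p'}}{\widehat{\omega}(r)^{p'-1}}\,dr<\infty,
\end{equation*}
where $M_\infty(r,h):=\sup_{|z|=r}|h(z)|$. The theorem would then be an immediate consequence: the maximum principle makes $r\mapsto M_\infty(r,g')$ non-decreasing, so for a non-constant $g$ there exist $r_0\in(0,1)$ and $c>0$ with $M_\infty(r,g')\geq c$ on $[r_0,1)$; the characterizing integral is then finite if and only if $\int_{r_0}^1(1-r)^{p'}/\widehat{\omega}(r)^{p'-1}\,dr$ is, which is equivalent to the theorem's condition. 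Conversely, $g(z)=z$ gives $M_\infty(r,g')\equiv 1$ and satisfies the characterization whenever $\int_0^1(1-r)^{p'}/\widehat{\omega}(r)^{p'-1}\,dr<\infty$, producing a non-constant member of $T(D^p_\omega,H^\infty)$.

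For the \emph{sufficient direction of the characterization}, I would integrate by parts and apply Fubini to write
\[|T_g f(z)|\leq|f(0)|\,|g(z)-g(0)|+\int_0^1|f'(se^{i\arg z})|\left(\int_s^1 M_\infty(t,g')\,dt\right)ds,\]
then apply H\"older's inequality in $s$ with a weight tailored to reproduce the integral on the right-hand side of the characterization, and finally invoke the sharp pointwise estimate $|f'(z)|\lesssim\|f\|_{D^p_\omega}((1-|z|)\widehat{\omega}(z))^{-1/p}$, valid for $\omega\in\DD$, to recover $\|f\|_{D^p_\omega}$.

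For the \emph{necessary direction}, I would test $T_g$ against the family $f_a(\zeta)=(1-\overline{a}\zeta)^{-s}$ with $s$ sufficiently large, using the standard identity $\int_\D|1-\overline{a}z|^{-c}\omega\,dA\asymp\widehat{\omega}(a)(1-|a|)^{1-c}$ (valid for $\omega\in\DD$ and $c$ large) to get $\|f_a\|_{D^p_\omega}\asymp\widehat{\omega}(a)^{1/p}(1-|a|)^{1/p-s-1}$, and extract from $|T_g f_a(a)|\le C\|f_a\|_{D^p_\omega}$ the desired bound on the growth of $g'$. The principal obstacle is that a single-parameter test yields only a pointwise estimate on $g'$ along a ray, of sup-type rather than integral-type (for the standard weight $\omega(r)=(1-r)^\alpha$ the two happen to coincide, but for general $\omega\in\DD$ they do not). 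To recover the integral condition in the characterization, I anticipate testing against sums $\sum_k c_k f_{a_k}$ over a separated sequence $\{a_k\}\subset\D$ tending non-tangentially to $\partial\D$ and invoking an $\ell^{p'}$-duality argument on the coefficient sequence, or, equivalently, using a Riesz-type representation of the dual of $D^p_\omega$ to express the uniformly bounded family of functionals $f\mapsto T_g f(z)$ in the required integral form.
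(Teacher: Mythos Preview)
Your plan rests on establishing the characterization
\[
T_g:D^p_\omega\to H^\infty\ \text{bounded}\ \Longleftrightarrow\ \int_0^1 M_\infty(r,g')^{p'}\,\frac{(1-r)^{p'}}{\widehat{\omega}(r)^{p'-1}}\,dr<\infty,
\]
and this is where the argument breaks down. Such a closed-form description of $T(D^p_\omega,H^\infty)$ in terms of $M_\infty(\cdot,g')$ is not known for general $\omega\in\DD$; indeed, as the introduction stresses, describing $T(X,H^\infty)$ by a neat condition on $g$ alone is an open problem in most cases, and the paper deliberately restricts to symbols with non-negative coefficients precisely because no such formula is available in general. Your own remarks already locate the difficulty: a single test function $f_a$ only produces a sup-type bound on $g'$, and for oscillatory $\omega\in\DD$ this cannot be upgraded to the integral condition by the $\ell^{p'}$-duality device you sketch (comparing your integral with the paper's condition in Theorem~\ref{nonnegativepless1intro}(iii) for $g(z)=z^{N+1}$ already shows a discrepancy of order $N$ for irregular moments $\omega_k$). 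The sufficient direction is also not secured by your outline: combining the pointwise estimate $|f'(z)|\lesssim\|f\|_{D^p_\omega}((1-|z|)\widehat{\omega}(z))^{-1/p}$ with H\"older along a ray leads, after Fubini, to the condition $\int_0^1(1-s)^{1/p'}\widehat{\omega}(s)^{-1/p}M_\infty(s,g')\,ds<\infty$, which is neither equivalent to nor implied by your $L^{p'}$-integral for arbitrary $\omega\in\DD$.

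The paper's route is genuinely different. It never attempts an $M_\infty$-characterization. Instead it invokes the abstract criterion from \cite{CPPR}, which says $T_g:D^p_\omega\to H^\infty$ is bounded if and only if $\sup_{z\in\D}\|G^{D^2_\omega}_{g,z}\|_{D^{p'}_\omega}<\infty$, where $G^{D^2_\omega}_{g,z}$ is built from the reproducing kernel and the duality $(D^p_\omega)^\star\simeq D^{p'}_\omega$ of Lemma~\ref{le:dualdp}. The two directions of the theorem are then obtained as follows: if the integral is finite, Theorem~\ref{TgDpom} (valid for $g$ with non-negative coefficients) shows directly that $g(z)=z$ lies in $T_c(D^p_\omega,H^\infty)$; if the integral diverges and some non-constant $g$ were admissible, one estimates $\|G^{D^2_\omega}_{g,z}\|_{D^{p'}_\omega}$ from below by passing, via the embeddings of Proposition~\ref{inclusion} and the decomposition norm \cite{MatPav,PeR}, to block-diagonal $H^{p'}$-norms of dyadic pieces, then isolates a single non-zero Taylor coefficient $\widehat{g}(N+1)$ with the Riesz projection to force the divergent series $\sum_k(k+1)^{-2-p'}\omega_k^{1-p'}$. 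This coefficient-level extraction is exactly what replaces the missing $M_\infty$-characterization.
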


Theorem~\ref{intro:1p<infinity} and its proof allow us to extend \cite[Theorem~1.3]{CPPR} from the setting of so-called regular weights to the whole doubling class $\DD$. Being precise, we deduce that $T(A^p_{\om}, H^{\infty})$ consists of constant functions only, if and only if,
	$$
	\int_0^1\frac{dr}{\widehat{\om}(r)^{p'-1}}=\infty,
	$$
provided $1<p<\infty$, $\om\in\DD$ and $g\in H^{\infty}$. Details yielding to this conclusion are given at the very end of the paper, in Section~\ref{Sec:p>1}.

Apart from the geometric characterizations of Carleson embeddings for the weighted Bergman spaces~\cite{PR2015}, the proofs of Theorems~\ref{intro:0<p<1} and \ref{intro:1p<infinity} are based on two main ingredients: an abstract criterion of the continuity of integral operators~\cite[Theorem~2.2]{CPPR}, and appropriate duality relations for the space $D^p_\omega$, given in Section~\ref{section:dualities}. In particular, we will show that, for $0<p<1$, the dual of $D^p_{\om}$ is isomorphic to the Zygmund space $\ZZ$ via the pairing
	\begin{equation*}\label{D^1-pairing}
	\langle f,g\rangle_{D^2_{W}}=\langle f',g'\rangle_{A^2_{W}}+f(0)\overline{g(0)},
	\end{equation*}
with equivalence of norms, see Lemma~\ref{lemma:dual-Dirichlet} in Section~\ref{section:dualities}. Here $W$ is an appropriate radial weight which depends on $p$ and $\omega$. Recall that the Zygmund space $\ZZ$ consists of $f\in\H(\D)$ such that
	$$
	\|f\|_{\ZZ}=\sup_{z\in\D}|f''(z)|(1-|z|^2)+|f'(0)|+|f(0)|<\infty.
	$$
It is well-known that $\ZZ$ is a subset of the disc algebra $\A$, and the containment of $f$ in $\ZZ$ is characterized by the boundary value condition
	$$
	\sup_{\theta}\sup_{h>0}\frac{|f(e^{i(\theta+h)}+f(e^{i(\theta-h)})-2f(e^{i\theta})|}{h}<\infty.
	$$
It is worth observing that $(D^p_{p-1})^\star$ is isomorphic to the weighted Bloch space $\B^2$
via the $A^2_{\frac1p-1}$-pairing, provided $0<p\le 1$~\cite[Lemma~6]{PRW}. The interplay between these different identifications of the dual of $D^p_{\om}$ is of course the change of the pairing with respect to which there are taken. The advantage of our duality $(D^p_{\om})^\star\simeq\ZZ$ compared to $(D^p_{\om})^\star\simeq\B^2$ is that, apart from being a much more general result, its proof is easier than that of \cite[Lemma~6]{PRW}. Namely, the crucial step in the proof of the last-mentioned result relies on technical tools related to coefficient multipliers of the Bloch spaces. Because of the pairing we work with, we can avoid many tedious calculations all together, and the proof itself becomes more straightforward and transparent via an appropriate use of a Carleson embedding theorem for the weighted Bergman spaces.

The dual of $D^1_\om$ is described via a suitable weighted $A^2$-pairing in terms of a weighted $\BMOA$-type space in Lemma~\ref{le:duald1omhat} in Section~\ref{section:dualities}. This result has its roots in the description of $(A^1_\om)^\star$ given in \cite[Theorem~4]{PelRat2020}. While at first glance the last-mentioned duality, as well as our description of $(D^1_\om)^\star$, might look intuitively unclear, this involved result serves us to resolve the case $p=1$ in Theorem~\ref{intro:0<p<1} and other forthcoming results.

The techniques developed on the way to the aforementioned results also allow us to characterize the weights $\omega\in\DD$ such that $T_c(D^p_{\om}, H^{\infty})$ contains constant functions only. The statements are given in Sections~\ref{Sec:p<1}--\ref{Sec:p>1} when each of the three cases $0<p<1$, $p=1$ and $1<p<\infty$ are considered separately in the said order.

The other set of results we obtain concern symbols $g$ with non-negative Maclaurin coefficients such that $T_g: D^p_\omega\to H^\infty$ is bounded or compact. These characterizations are provided in terms of the moments of the weights. Therefore we write $\omega_x=\int_0^1 r^x\omega(r)\,dr$ for all $x\ge 0$. From now on, set $g(z)=\sum_{n=0}^\infty\widehat{g}(n)z^n, z\in\D.$

\begin{theorem}\label{nonnegativepless1intro}
Let $\omega\in\DD$ and $g\in H^\infty$ such that $\widehat{g}(n)\geq0$ for all $n\in\mathbb{N}\cup\{0\}$. Then the following statements are valid:
\begin{enumerate}
\item[\rm(i)] If $0<p<1$, then $T_g:D^p_{\om}\to H^\infty$ is bounded if and only if
	$$
	A_{p,\omega}=\sup_{0\leq r<1}\left((1-r)\sum_{k=0}^{\infty}\frac{(k+1)^{\frac1p-1}r^k}{(\om_k)^{\frac1p}}\left(\sum_{n=0}^{\infty}
	\frac{\widehat{g}(n+1)(n+1)}{n+k+1}\right)\right)<\infty,
	$$
and
	$
	\|T_g\|_{D^p_{\om}\rightarrow H^\infty}
	\asymp A_{p,\omega}.$
Moreover, $T_g:D^p_{\om}\to H^\infty$ is compact if and only if
	$$
	\limsup_{r\rightarrow1^-}\left((1-r)\sum_{k=0}^{\infty}\frac{(k+1)^{\frac1p-1}r^k}{(\om_k)^{\frac1p}}\left(\sum_{n=0}^{\infty}
	\frac{\widehat{g}(n+1)(n+1)}{n+k+1}\right)\right)=0.
	$$
\item[\rm(ii)] If $p=1$, then $T_g:D^p_{\om}\to H^\infty$ is bounded if and only if
  $$
	A_{\omega}=\sup_{0<r,s<1}\left(\whw(r)^2\sum_{m=0}^\infty\frac{1-s}{(m+1)^2}\left(\sum_{k=0}^m\frac{r^{k+2}}
	{(\om\whw)_{k+1}}\sum_{n=0}^\infty\frac{\widehat{g}(n+1)(n+1)}{n+k+1}s^{n+m+3}\right)^2\right)<\infty,
	$$
and
	$
	\|T_g\|_{D^1_{\om}\rightarrow H^\infty}^2
	\asymp A_{\omega}.
	$
Moreover, $T_g:D^1_{\om}\to H^\infty$ is compact if and only if
$$
\limsup_{r\rightarrow1^-}\limsup_{s\rightarrow1^-}\left(\whw(r)^2\sum_{m=0}^\infty\frac{1-s}{(m+1)^2}\left(\sum_{k=0}^m\frac{r^{k+2}s^{m-k}}
{(\om\whw)_{k+1}}\sum_{n=0}^\infty\frac{\widehat{g}(n+1)(n+1)}{n+k+1}\right)^2\right)=0.
$$
\item[\rm(iii)] If $1<p<\infty$, then $T_g: D^p_{\omega}\rightarrow H^\infty$ is bounded (equivalently compact) if and only if
	\begin{equation*}
	\sum_{k=0}^{\infty}\frac{(k+1)^{-2}}{(\omega_k)^{p'-1}}
	\left(\sum_{n=0}^{\infty}\frac{(n+1)\widehat{g}(n+1)}{n+k+1}\right)^{p'}<\infty.
	\end{equation*}
Moreover,
	\begin{equation*}
	\|T_g\|_{D^p_{\om}\rightarrow H^{\infty}}^{p'}\asymp\sum_{k=0}^{\infty}\frac{(k+1)^{-2}}{(\omega_k)^{p'-1}}
	\left(\sum_{n=0}^{\infty}\frac{(n+1)\widehat{g}(n+1)}{n+k+1}\right)^{p'}.
	\end{equation*}
	\end{enumerate}
\end{theorem}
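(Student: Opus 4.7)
The plan is to combine the abstract criterion \cite[Theorem~2.2]{CPPR} with the duality descriptions of $(D^p_\om)^*$ developed in Section~\ref{section:dualities}. The starting observation, valid for every $p$, is that $T_g:D^p_\om\to H^\infty$ is bounded (respectively compact) if and only if the family of functionals $\Lambda_z(f)=T_g(f)(z)$, $z\in\D$, is uniformly bounded on $D^p_\om$ (respectively, has norm tending to $0$ as $|z|\to 1^-$). Expanding in Taylor series one gets
	\[
	T_g(f)(z)=\sum_{k=0}^\infty \widehat{f}(k)\,a_k(z,g),\qquad a_k(z,g)=z^{k+1}\sum_{n=0}^\infty \frac{(n+1)\widehat{g}(n+1)}{n+k+1}z^n,
	\]
and the hypothesis $\widehat{g}(n)\ge 0$ forces the suprema $\sup_z\|\Lambda_z\|_{(D^p_\om)^*}$ to be attained along the positive real axis, reducing matters to the analysis of $\|\Lambda_r\|_{(D^p_\om)^*}$ for $r\in[0,1)$.

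For $1<p<\infty$, the dual of $D^p_\om$ is realized as an $A^{p'}$-type space via an appropriate weighted $A^2$-pairing, so the representing function $h_r$ of $\Lambda_r$ has Taylor coefficients proportional to $a_k(r,g)$ and hence non-negative. Standard Hardy--Littlewood type coefficient estimates for weights in $\DD$ then give
	\[
	\|h_r\|_{(D^p_\om)^*}^{p'}\asymp\sum_{k=0}^\infty\frac{(k+1)^{-2}}{(\om_k)^{p'-1}}|\widehat{h_r}(k+1)|^{p'},
	\]
and letting $r\to 1^-$ by monotone convergence produces the series in (iii). The automatic equivalence with compactness follows because a convergent series of this kind permits $T_g$ to be approximated in operator norm by its truncations $f\mapsto \sum_{k\le N}\widehat{f}(k)\,a_k(\cdot,g)$ on the monomials.

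For $0<p<1$, Lemma~\ref{lemma:dual-Dirichlet} identifies $(D^p_\om)^*$ with $\ZZ$ through a $D^2_W$-pairing; the positivity of the coefficients of $h_r$ reduces the Zygmund seminorm $\sup_\z(1-|\z|)|h_r''(\z)|$ to its value on the real axis, so that after interchanging the resulting suprema and exploiting monotonicity $a_k(r,g)\nearrow a_k(1,g)$ one arrives at the single-parameter expression $A_{p,\om}$, once the identification $(kW_{2k-1})^{-1}\asymp(k+1)^{1/p-1}(\om_k)^{-1/p}$ is checked for the specific weight $W$. For $p=1$, Lemma~\ref{le:duald1omhat} realizes $(D^1_\om)^*$ as a weighted $\BMOA$-type space whose norm is itself an integral supremum in a dilation parameter, and this yields the double-parameter expression $A_\om$: one supremum encodes the evaluation parameter $r$ of $\Lambda_r$, the other the $\BMOA$-seminorm of $h_r$ in $s$, while the weights $(\om\whw)_{k+1}$ record the particular $A^2$-pairing employed. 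The compactness statements in (i) and (ii) follow verbatim upon replacing $\sup_r$ with $\limsup_{r\to 1^-}$.

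The principal obstacle is case (ii): the Zygmund and $A^{p'}$ realizations yield transparent coefficient-level norms, but the $\BMOA$-type description contains two nested suprema that do not collapse into a single one, so extracting the stated expression while retaining the precise weights $(\om\whw)_{k+1}$ requires careful bookkeeping. A secondary technicality present in all three cases is the reduction from $\sup_z$ over $\D$ to $\sup_r$ along $[0,1)$, which depends on the positivity of the Taylor coefficients of $g$ together with the doubling estimates afforded by $\om\in\DD$.
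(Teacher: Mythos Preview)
Your outline follows the same overall strategy as the paper: invoke the abstract criterion \cite[Theorem~2.2]{CPPR} and compute $\sup_{z\in\D}\|G_{g,z}\|$ in the dual space identified in Section~\ref{section:dualities}, using positivity of $\widehat g$ to reduce to real parameters. That much is correct.

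The genuine gap is in case (iii). Your assertion that ``standard Hardy--Littlewood type coefficient estimates for weights in $\DD$'' yield
\[
\|h_r\|_{(D^p_\om)^*}^{p'}\asymp\sum_{k=0}^\infty\frac{(k+1)^{-2}}{(\om_k)^{p'-1}}\bigl|\widehat{h_r}(k+1)\bigr|^{p'}
\]
is not a standard fact: for $\om\in\DD$ the space $D^{p'}_\om$ does not in general coincide with a weighted $\ell^{p'}$ space of Taylor coefficients, even for functions with non-negative coefficients. The paper obtains this comparison only after substantial work. For the upper bound it passes through the decomposition-norm theorem \cite[Theorem~3.4]{PeR} to write $\|(G_{g,z}^{D^2_\om})'\|_{A^{p'}_\om}^{p'}\asymp\sum_j 2^{-j}\|\Delta_j^\om(\cdot)\|_{H^{p'}}^{p'}$, and the block $H^{p'}$-norms are then handled differently in two ranges: for $1<p\le2$ the inclusion \eqref{Hardy2} suffices, but for $2<p<\infty$ one needs the dyadic Littlewood--Paley decomposition \cite[Theorem~2.1]{MatPav}, the smooth-multiplier estimate \cite[Proposition~9]{PRW}, and \eqref{Deltanorm}. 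For the lower bound one must check that the coefficients of $G_{g,x}^{H^2}$ are not merely non-negative but \emph{decreasing} in $k$, so that \eqref{Eq:old-case} applies. Your sketch suppresses all of this, and without it the claimed two-sided estimate is unjustified. The compactness argument for (iii) is also not the ``truncation'' you describe: the paper instead shows directly that $\int_{\D\setminus D(0,R)}|(G_{g,z}^{D^2_\om})'|^{p'}\om\,dA\to0$ uniformly in $z$, via dominated convergence, and then applies the sequential criterion.

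Cases (i) and (ii) are closer to what the paper does, but the compactness statements are not obtained by mechanically ``replacing $\sup_r$ with $\limsup_{r\to1^-}$''. Necessity requires the relative compactness of $\{G_{g,z}:z\in\D\}$ in $\ZZ$ (resp.\ $\BMOA'(\infty,\om)$), obtained from compactness of $T_g^*$, together with the observation that each fixed $G_{g,z_j}$ lies in the corresponding little-oh space. Sufficiency is a direct estimate of $|\langle f_n,G_{g,z}\rangle|$ split over $D(0,R)$ and $\D\setminus D(0,R)$, using \eqref{vitui} in case (i) and the $H^1$--$\BMOA$ duality in case (ii); this does not follow from the bare limsup condition without reproducing those inequalities.
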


The conditions appearing in the three cases in Theorem~\ref{nonnegativepless1intro} are different due to the different identifications of the dual of $D^p_\omega$ used. This fact forces us to employ different techniques in each case of the theorem. In particular, we will use a decomposition norm theorem for $D^p_\omega$ \cite[Theorem~3.4]{PeR}, see also \cite[Theorem~4]{PR2017}, valid for $1<p<\infty$ and~$\omega\in\DD$, and results on universal C\'esaro basis of polynomials.

It is worth mentioning that the hypothesis $g\in H^\infty$ in Theorems~\ref{intro:0<p<1}-\ref{nonnegativepless1intro} is not a real restriction, because it is obviously necessary so that $T_g: D^p_\om\to H^\infty$ to be bounded.

There is one more thing worth mentioning before proceeding to the proofs. Namely, some of our arguments take us very naturally to consider spaces defined in terms of the Maclaurin coefficients of the function. To this end, for $0<p<\infty$ and a radial weight $\om$, define the weighted Hardy-Littlewood space $\HL^{\om}_{p}$ as the set of those $f\in\H(\D)$ whose Maclaurin coefficients $\{\widehat{f}(n)\}_{n=0}^\infty$ satisfy
	$$
	\nm{f}^p_{\HL_p^{\om}}=\sum_{n=0}^{\infty}\abs{\widehat{f}(n)}^p (n+1)^{p-2}\om_{np+1}<\infty.
	$$
In the next section we explain how this space come to the picture and what it serves us for.

We finish the introduction by couple of words about the notation used in this paper. Throughout the paper, $\frac1p+\frac{1}{p'}=1$ for $1<p<\infty.$ Further, the letter $C=C(\cdot)$ will denote an absolute constant whose value depends on the parameters indicated
in the parenthesis, and may change from one occurrence to another. If there exists a constant
$C=C(\cdot)>0$ such that $a\le Cb$, then we write either $a\lesssim b$ or $b\gtrsim a$. In particular, if $a\lesssim b$ and
$a\gtrsim b$, then we denote $a\asymp b$ and say that $a$ and $b$ are comparable.

\section{New spaces and basic results}

Recall that, for $0<p<\infty$ and $-1<\alpha<\infty$, the classical weighted Dirichlet space $D^p_\alpha$ is induced by the standard radial weight $\om(z)=(\alpha+1)(1-|z|^2)^{\alpha}$. The closely related Hardy-Littlewood space $\HL_p$ contains those $f\in\H(\D)$ whose Maclaurin coefficients $\{\widehat{f}(n)\}_{n=0}^\infty$ satisfy
	$$
	\nm{f}^p_{\HL_p}=\sum_{n=0}^{\infty}\abs{\widehat{f}(n)}^p (n+1)^{p-2}<\infty.
	$$
These and the Hardy spaces satisfy the well-known continuous embeddings
	\begin{equation}\label{Hardy1}
	D^p_{p-1}\subset H^p\subset\HL_p,\quad 0<p\le 2,
	\end{equation}
and
	\begin{equation}\label{Hardy2}
	\HL_p\subset H^p\subset D^p_{p-1} ,\quad 2\le p<\infty,
	\end{equation}
by \cite{D,Flett,LP}. Each of these inclusions is strict unless $p=2$ in which case all the spaces are the same by direct calculations or straightforward applications of Parseval's formula and Green's theorem.

Recall that, for $0<p<\infty$ and a radial weight $\om$, the weighted Hardy-Littlewood space $\HL^{\om}_{p}$ was defined by the condition
	$$
	\nm{f}^p_{\HL_p^{\om}}=\sum_{n=0}^{\infty}\abs{\widehat{f}(n)}^p (n+1)^{p-2}\om_{np+1}<\infty,
	$$
where $\om_x=\int_0^1r^x\om(r)\,dr$ for all $0\le x<\infty$. Since obviously $\om_x\to0$, as $x\to\infty$, we have $\HL_p\subset\HL_p^\om$ for each radial weight $\omega$. The spaces $\HL^{\om}_{p}$ arise naturally when \eqref{Hardy1} and \eqref{Hardy2} are applied to dilatations and integrated over $(0,1)$ with respect to $r\om(r)$. To see this in detail, we will need the following lemma which will be used repeatedly also throughout the rest paper, see \cite[Lemma 2.1]{PelSum14} for a proof.

\begin{letterlemma}\label{Eqdhat}
Let $\om$ be a radial weight. Then the following conditions are equivalent:
\begin{itemize}
\item[(i)] $\om\in\DD$;
\item[(ii)] There exist $C=C(\om)>0$ and $\beta_0=\beta_0(\om)>0$ such that
	$$
	\widehat{\om}(r)\leq C\left(\frac{1-r}{1-t}\right)^{\beta}\widehat{\om}(t),\quad 0\leq r\leq t<1,
	$$
for all $\beta\geq\beta_0$;
\item[(iii)] The asymptotic equality
	$$
	\om_x=\int_0^1s^x\om(s)\,ds\asymp\whw\left(1-\frac1x\right),
	$$
is valid for any $x\geq1$;
\item[(iv)] There exist constants $C=C(\om)\ge1$ and $\eta=\eta(\om)>0$ such that	
	$$
	\om_x\le C\left(\frac{y}{x}\right)^\eta\om_{y},\quad 0<x\le y<\infty.
	$$
\end{itemize}
\end{letterlemma}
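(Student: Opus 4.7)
The plan is to establish the four-way equivalence cyclically, via (i) $\Leftrightarrow$ (ii), (ii) $\Rightarrow$ (iii) $\Rightarrow$ (iv) $\Rightarrow$ (i), with essentially one workhorse identity: the Fubini/integration-by-parts formula
$$
\om_x = \int_0^1 x t^{x-1}\widehat{\om}(t)\,dt,
$$
obtained by writing $t^x = 1-\int_t^1 xs^{x-1}ds$ and swapping the order of integration.

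For (i) $\Leftrightarrow$ (ii), I would iterate the defining inequality. Applying $\widehat{\om}(r)\le C\widehat{\om}((1+r)/2)$ repeatedly gives $\widehat{\om}(r)\le C^n\widehat{\om}(1-(1-r)/2^n)$ for every $n\ge1$. Given $0\le r\le t<1$, pick $n$ to be the integer with $2^n\le (1-r)/(1-t)<2^{n+1}$. Then $1-(1-r)/2^n\le t$, so monotonicity of $\widehat{\om}$ yields $\widehat{\om}(r)\le C^n\widehat{\om}(t)\le C\bigl(\tfrac{1-r}{1-t}\bigr)^{\log_2 C}\widehat{\om}(t)$, which is (ii) with $\beta_0=\log_2 C$. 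The converse is immediate by setting $t=(1+r)/2$ in (ii).

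For (ii) $\Rightarrow$ (iii), the lower bound $\om_x\gtrsim\widehat{\om}(1-1/x)$ is unconditional: $\om_x\ge\int_{1-1/x}^1 s^x\om(s)\,ds\ge (1-1/x)^x\widehat{\om}(1-1/x)$, and $(1-1/x)^x$ is bounded below. For the upper bound, split the identity at $t=1-1/x$. On $[1-1/x,1]$, use $\widehat{\om}(t)\le\widehat{\om}(1-1/x)$, so the contribution is $\le\widehat{\om}(1-1/x)(1-(1-1/x)^x)\lesssim\widehat{\om}(1-1/x)$. On $[0,1-1/x]$, use (ii) applied to the pair $(t,1-1/x)$ to get $\widehat{\om}(t)\lesssim (x(1-t))^\beta\widehat{\om}(1-1/x)$, and then the remaining integral is a Beta-function quantity $x^{\beta+1}\int_0^1 t^{x-1}(1-t)^\beta\,dt\asymp 1$.

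For (iii) $\Rightarrow$ (iv), apply (iii) at both endpoints: $\om_x/\om_y\asymp\widehat{\om}(1-1/x)/\widehat{\om}(1-1/y)$. With $r=1-1/x$, $t=1-1/y$, the ratio $(1-r)/(1-t)$ equals $y/x$, so the polynomial decay from (i) $\Leftrightarrow$ (ii) (which we can now feed back through (iii) by the route sketched below) gives $\widehat{\om}(r)\le C(y/x)^\beta\widehat{\om}(t)$, hence (iv) with $\eta=\beta$. Finally, for (iv) $\Rightarrow$ (i), I would specialize to $y=2x$, obtaining $\om_x\le C\om_{2x}$, and analyze
$$
\om_x-\om_{2x}=\int_0^1 t^x(1-t^x)\om(t)\,dt.
$$
The integrand is pointwise bounded by $1/4$, vanishes at the endpoints, and is bounded below by an absolute positive constant on a dyadic annulus $[1-c_1/x,1-c_2/x]$ (where $t^x\in[e^{-c_1},e^{-c_2}]$). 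Thus $\om_x-\om_{2x}\asymp\widehat{\om}(1-c_2/x)-\widehat{\om}(1-c_1/x)$ up to dominating terms, and the inequality $\om_x\le C\om_{2x}$ together with the unconditional lower bound $\om_x\gtrsim\widehat{\om}(1-1/x)$ upgrades, via a dyadic telescoping in $x\mapsto 2x$, to $\widehat{\om}(1-1/x)\lesssim\widehat{\om}(1-1/(2x))$, which is (i).

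The main obstacle I anticipate is closing the last leg (iv) $\Rightarrow$ (i) without circularity: the natural-looking argument that reads off doubling of $\widehat{\om}$ from doubling of the moments must not quietly invoke the upper bound in (iii), since that in turn rests on (ii). The delicate point is extracting the tail-doubling inequality directly from the difference $\om_x-\om_{2x}$ and its localization near $t=1-1/x$, which bypasses the need for (ii) at that stage; once this is done, the cycle closes and all four conditions become equivalent.
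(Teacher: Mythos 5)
First, note that the paper does not prove this lemma at all: it is stated as a known result with an explicit pointer to \cite[Lemma~2.1]{PelSum14}, so there is no in-paper argument to compare yours against. Judged on its own, your cycle has two genuine gaps, both in the second half. The first is a circularity: in your step (iii)~$\Rightarrow$~(iv) you invoke the polynomial bound of (ii) at the points $r=1-1/x$, $t=1-1/y$, and justify this by saying (ii) ``can now be fed back through (iii) by the route sketched below'' --- but that route is precisely (iii)~$\Rightarrow$~(iv)~$\Rightarrow$~(i)~$\Rightarrow$~(ii), i.e.\ it presupposes the implication you are in the middle of proving. The second, which you yourself flag as delicate, is (iv)~$\Rightarrow$~(i). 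Your proposal is to telescope the lower bound $\om_x-\om_{2x}\gtrsim\widehat{\om}(1-c_1/x)-\widehat{\om}(1-c_2/x)$ dyadically; but the pointwise comparison for the integrand only yields this one-sided bound (the integrand $t^x(1-t^x)\om(t)$ is positive on all of $(0,1)$, so $\om_x-\om_{2x}$ is \emph{not} comparable to a single annular increment), and summing the resulting inequalities over the scales $x,2x,4x,\dots$ produces $\widehat{\om}(1-c_1/x)\lesssim\sum_j\om_{2^jx}$, a series that need not converge (take $\om_y\sim 1/\log y$). So the telescoping does not close, and the crucial implication is not established.

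Both gaps are repairable, and the repairs are simpler than what you attempt. For (iii)~$\Rightarrow$~(i) there is a one-line argument that avoids (iv) entirely: the trivial bound $\om_x\ge\int_{1-2/x}^1 s^x\om(s)\,ds\ge(1-2/x)^x\,\widehat{\om}(1-2/x)\gtrsim\widehat{\om}(1-2/x)$ for $x\ge 4$, combined with the \emph{upper} bound $\om_x\lesssim\widehat{\om}(1-1/x)$ from (iii), gives $\widehat{\om}(1-2/x)\lesssim\widehat{\om}(1-1/x)$, which is exactly the doubling condition at $r=1-2/x$; after that, (i)~$\Rightarrow$~(ii) together with (iii) yields (iv) non-circularly. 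For (iv)~$\Rightarrow$~(i) the standard device is absorption with a large multiplicative gap rather than telescoping: writing $\om_{Kx}=\int_0^{1-1/(2x)}+\int_{1-1/(2x)}^1$ and using $s^{Kx}\le(1-\tfrac1{2x})^{(K-1)x}s^x\le e^{-(K-1)/2}s^x$ on the first range gives $\om_{Kx}\le e^{-(K-1)/2}\om_x+\widehat{\om}(1-\tfrac1{2x})$; feeding this into $\om_x\le CK^\eta\om_{Kx}$ and choosing $K$ with $CK^\eta e^{-(K-1)/2}\le\tfrac12$ absorbs the $\om_x$ term and leaves $\om_x\lesssim\widehat{\om}(1-\tfrac1{2x})$, whence $\widehat{\om}(1-\tfrac1x)\le 4\om_x\lesssim\widehat{\om}(1-\tfrac1{2x})$. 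Two smaller points: your workhorse identity $\om_x=\int_0^1xt^{x-1}\widehat{\om}(t)\,dt$ and the Beta-function proof of (ii)~$\Rightarrow$~(iii) are correct; but in (i)~$\Rightarrow$~(ii) your choice of $n$ with $2^n\le(1-r)/(1-t)<2^{n+1}$ gives $1-(1-r)/2^n\le t$, and since $\widehat{\om}$ is decreasing this makes $\widehat{\om}(1-(1-r)/2^n)\ge\widehat{\om}(t)$, the wrong direction --- you need $n+1$ iterations so that $1-(1-r)/2^{n+1}\ge t$.
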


Set $\widetilde{\om}(z)=\frac{\widehat{\om}(z)}{(1-|z|)}$ and $\om_{[\beta]}(z)=\om(z)(1-|z|)^\beta$ for all $\beta\in\RR$ and $z\in\D$. If $\om\in\DD$, then the spaces $\HL^{\om}_p$, $A^p_{\om}$ and $D^p_{\widetilde{\om}_{[p]}}$ are closely related and obey the inclusions corresponding to the norm inequalities appearing in the following result.

\begin{proposition}\label{inclusion}
Let $\om\in\DD$. Then the following statements hold:
\begin{itemize}
\item[(i)] If $0<p\le2$, then $\|f\|_{\HL_p^{\om}}\lesssim\|f\|_{A^p_{\om}}\lesssim\|f\|_{D^p_{\widetilde{\om}_{[p]}}}$ for all $f\in\H(\D)$;
\item[(ii)] If $2\le p<\infty$, then $\|f\|_{D^p_{\widetilde{\om}_{[p]}}}\lesssim\|f\|_{A^p_{\om}}\lesssim\|f\|_{\HL_p^{\om}}$ for all $f\in\H(\D)$.
\end{itemize}
\end{proposition}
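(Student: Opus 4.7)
My plan follows the roadmap laid out in the paragraph preceding the proposition: apply the classical embeddings \eqref{Hardy1} and \eqref{Hardy2} to the dilations $f_r(z)=f(rz)$, multiply by $r\om(r)$, and integrate over $r\in(0,1)$. Two basic tools drive this: the polar identity $\|f\|_{A^p_\om}^p\asymp\int_0^1 M_p(r,f)^p r\om(r)\,dr$ together with $\|f_r\|_{H^p}^p=M_p(r,f)^p$, and the defining identity $\om_{np+1}=\int_0^1 r^{np+1}\om(r)\,dr$ for the moments appearing in the $\HL_p^\om$-norm. Lemma~\ref{Eqdhat} supplies the properties of $\om\in\DD$ needed along the way.

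For the inequalities involving $\HL_p^\om$, namely the first in (i) and the second in (ii), the argument is essentially automatic: apply $\HL_p\subset H^p$ or its reverse to $f_r$ at each fixed $r$, then integrate against $r\om(r)$; the factor $r^{np}$ from $\|f_r\|_{\HL_p}^p=\sum_n|\widehat f(n)|^p r^{np}(n+1)^{p-2}$ combines with $r\om(r)\,dr$ to produce exactly the moments $\om_{np+1}$. For the inequalities involving $D^p_{\widetilde\om_{[p]}}$, namely the second in (i) and the first in (ii), I apply $D^p_{p-1}\subset H^p$ (resp.\ its reverse) to $f_r$ and perform the polar substitution $u=r\rho$ in the radial integral to reach
$$M_p(r,f)^p\lesssim|f(0)|^p+r^{-1}\int_0^r M_p(u,f')^p(r-u)^{p-1}u\,du,$$
with reverse inequality when $p\ge 2$. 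Multiplying by $r\om(r)$, integrating and applying Fubini then reduces the entire matter to the one-variable estimate
$$\int_u^1(r-u)^{p-1}\om(r)\,dr\lesssim(1-u)^{p-1}\widehat\om(u)=\widetilde\om_{[p]}(u)$$
(with the reverse $\gtrsim$ for (ii)). The upper direction is immediate for $1\le p\le 2$ since $(r-u)^{p-1}\le(1-u)^{p-1}$ on $[u,1]$; the lower direction for $p\ge 2$ follows by restricting the integration to $[(1+u)/2,1]$, where $r-u\ge(1-u)/2$, and invoking the doubling bound $\widehat\om((1+u)/2)\asymp\widehat\om(u)$.

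The main obstacle is the upper-direction inner estimate in the range $0<p<1$, where $(r-u)^{p-1}$ has an integrable but non-trivial singularity at $r=u$. My plan there is to split $[u,1]$ into the dyadic pieces $I_k=[u+2^{-k-1}(1-u),u+2^{-k}(1-u)]$, $k\ge 0$, dominate $(r-u)^{p-1}$ on $I_k$ by $\asymp 2^{k(1-p)}(1-u)^{p-1}$, and control the $\om$-mass of $I_k$ using Lemma~\ref{Eqdhat}(ii) with $\beta$ chosen sufficiently large so that the resulting series is geometric in $k$. Summing then yields the bound $(1-u)^{p-1}\widehat\om(u)$, up to a constant depending only on $p$ and $\om$. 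This dyadic step is the only genuinely technical piece of the proof; everything else is routine bookkeeping built out of the Hardy-Littlewood embeddings and Fubini's theorem.
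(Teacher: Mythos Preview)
Your overall plan matches the paper's approach, and most of the argument is correct: the inequalities involving $\HL_p^\om$, the case $1\le p\le 2$ of the upper pointwise weight comparison, and your lower bound for $p\ge 2$ all go through essentially as you describe. However, your treatment of the range $0<p<1$ contains a genuine error.

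You claim that the pointwise estimate
\[
h_\om(u):=\int_u^1(r-u)^{p-1}\om(r)\,dr\;\lesssim\;(1-u)^{p-1}\widehat\om(u)=\widetilde\om_{[p]}(u)
\]
follows from a dyadic decomposition of $[u,1]$ combined with Lemma~\ref{Eqdhat}(ii). This pointwise inequality is \emph{false} for general $\om\in\DD$. The hypothesis $\om\in\DD$ controls only how fast $\widehat\om$ may decay; it says nothing about how concentrated $\om$ itself can be on short subintervals. If most of the $\om$-mass of $[u,1]$ sits on $[u,u+\delta(1-u)]$ with $\delta$ small, then $h_\om(u)\gtrsim\delta^{p-1}\widetilde\om_{[p]}(u)$, and $\delta^{p-1}\to\infty$ as $\delta\to0$ since $p<1$. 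Weights in $\DD$ exhibiting exactly this behaviour along a sequence $u_n\to1^-$ exist (the paper itself stresses that $\DD$ permits wild oscillation and vanishing on hyperbolically unbounded sets). Concretely, your dyadic plan needs $\int_{I_k}\om\lesssim 2^{-k\gamma}\widehat\om(u)$ for some $\gamma>1-p$, but Lemma~\ref{Eqdhat}(ii) yields no such decay---it bounds $\widehat\om$ from \emph{above} as one moves toward~$0$, not from below as one moves toward~$1$; the only bound available is the trivial $\int_{I_k}\om\le\widehat\om(u)$.

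The paper avoids this by abandoning the pointwise comparison. It shows that $\widetilde\om_{[p]}\in\DD$ and invokes the Carleson embedding theorem \cite[Theorem~1(b)]{PR2015}: the norm inequality $\|\cdot\|_{D^p_{h_\om}}\lesssim\|\cdot\|_{D^p_{\widetilde\om_{[p]}}}$ follows from the \emph{integrated} condition $\int_{|a|}^1 h_\om(s)s\,ds\lesssim\int_{|a|}^1\widetilde\om_{[p]}(t)\,dt$. This one does hold: Fubini gives $\int_{|a|}^1 h_\om(s)\,ds\le\frac1p\int_{|a|}^1(r-|a|)^p\om(r)\,dr\le\frac1p(1-|a|)^p\widehat\om(a)$, and doubling then yields $(1-|a|)^p\widehat\om(a)\lesssim\int_{|a|}^1\widetilde\om_{[p]}(t)\,dt$. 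The passage from a pointwise to an averaged estimate is exactly what rescues the argument, and it requires the Carleson embedding as an external tool.
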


\begin{proof}
We begin with (i). Denote $f_r(z)=f(rz)$ for all $0\le r<1$ and $z\in\D$. An application of \eqref{Hardy1} to $f_r$ and a change of variable yield
	\begin{align*}
	\sum_{n=0}^{\infty}|\widehat{f}(n)|^pr^{np}(n+1)^{2-p}
	&\lesssim M_p^p(r,f)
	\lesssim\int_0^1\left(\int_0^{2\pi}\abs{f'(rte^{i\theta})}^p\,d\theta\right)r^p(1-t)^{p-1}t\,dt+|f(0)|^p\\
	&=\int_{0}^{r}M_p^p(s,f')\left(r-s\right)^{p-1}\frac{s}{r}\,ds+|f(0)|^p,\quad f\in\H(\D).
	\end{align*}
By multiplying this inequality by $\om(r)r$, integrating from 0 to 1 with respect to $r$, and using Fubini's theorem we deduce
	\begin{equation}\label{Eq:1}
	\begin{split}
	\|f\|_{\HL^{\om}_p}^p
	&=\sum_{n=0}^{\infty}(n+1)^{2-p}\abs{\widehat{f}(n)}^p\om_{np+1}
	\lesssim \|f\|^p_{A^p_{\om}}\\
	&\lesssim\int_0^1\left(\int_0^rM_p^p(s,f')
	\left(r-s\right)^{p-1}s\,ds\right)\om(r)\,dr+|f(0)|^p\\
	&=\int_0^1M_p^p(s,f')\left(\int_s^1\left(r-s\right)^{p-1}\om(r)\,dr\right)s\,ds+|f(0)|^p
	=\|f\|^p_{D^p_{h_{\om}}},
	\end{split}
	\end{equation}
where
	$$
	h_{\om}(z)=\int_{|z|}^1\left(r-|z|\right)^{p-1}\om(r)\,dr,\quad z\in\D.
	$$
	
If $1\le p<\infty$, then the pointwise inequality $h_{\om}(z)\le\widehat{\om}(z)(1-|z|)^{p-1}=\widetilde{\om}_{[p]}(z)$ is valid for all $z\in\D$. This observation together with \eqref{Eq:1} proves (i) with $1\le p\le2$ without any hypothesis on the radial weight $\om$.

If $0<p<1$, then $h_{\om}\ge\widetilde{\om}_{[p]}$ on $\D$, and hence we must argue in a different manner. By the hypothesis $\om\in\DD$ we deduce
	\begin{equation*}
	\int_r^1\widetilde{\om}_{[p]}(t)\,dt
	\lesssim\int_r^1\widehat{\om}\left(\frac{1+t}{2}\right)(1-t)^{p-1}\,dt
	=2^p\int_{\frac{1+r}{2}}^1\widetilde{\om}_{[p]}(t)\,dt,\quad 0\le r<1,
	\end{equation*}
and thus $\widetilde{\om}_{[p]}\in\DD$ for each $0<p<\infty$. Therefore the estimate $\|f\|_{D^p_{h_{\om}}}\lesssim\|f\|_{D^p_{\widetilde{\om}_{[p]}}}$ follows from \cite[Theorem~1(b)]{PR2015} at once if we show that $h_{\om}(S(a))\lesssim\widetilde{\om}_{[p]}(S(a))$ for all $a\in\D$. Since $h_{\om}$ and $\widetilde{\om}_{[p]}$ both are radial, we may ignore the angular integral and prove this as follows. By Fubini's theorem and the hypothesis $\om\in\DD$ we deduce
	\begin{equation}\label{Eq:CM-condition}
	\begin{split}
	\int_{|a|}^1h_{\om}(s)\,sds
	&\le\int_{|a|}^1\om(r)\left(\int_{|a|}^r\left(r-s\right)^{p-1}\,ds\right)\,dr
	=\frac1p\int_{|a|}^1\left(r-|a|\right)^p\om(r)\,dr\\
	&\le\frac1p\widehat{\om}(a)(1-|a|)^p
	\lesssim\widehat{\om}\left(\frac{1+|a|}{2}\right)\int_{|a|}^{\frac{1+|a|}{2}}(1-t)^{p-1}\,dt\\
	&\le\int_{|a|}^1\widetilde{\om}_{[p]}(t)\,dt,\quad a\in\D.
	\end{split}
	\end{equation}
It follows that, for each $0<p<1$ and $\om\in\DD$, we have $\|f\|_{D^p_{h_{\om}}}\lesssim\|f\|_{D^p_{\widetilde{\om}_{[p]}}}$ for all $f\in\H(\D)$. This finishes the proof of (i).


To prove (ii) we first observe that, by arguing as in the first part of the proof, an application of \eqref{Hardy2} yields $\|f\|^p_{D^p_{h_{\om}}}\lesssim\|f\|^p_{A^p_{\om}}\lesssim\|f\|_{\HL^{\om}_p}^p$ for all $f\in\H(\D)$. Further, if $1<p<\infty$, then an integration by parts and the hypothesis $\om\in\DD$ yield
	\begin{align*}
	h_{\om}(z)
	&=(p-1)\int_{|z|}^1\left(r-|z|\right)^{p-2}\widehat{\om}(r)\,dr
	\ge(p-1)\widehat{\om}\left(\frac{3+|z|}{4}\right)\int_{\frac{1+|z|}{2}}^{\frac{3+|z|}{4}}\left(r-|z|\right)^{p-2}\,dr\\
	&=\widehat{\om}\left(\frac{3+|z|}{4}\right)(1-|z|)^{p-1}\left(\left(\frac{3}{4}\right)^{p-1}-\left(\frac{1}{2}\right)^{p-1}\right)
	\gtrsim\widetilde{\om}_{[p]}(z),\quad z\in\D.
	\end{align*}
This pointwise estimate yields $\|f\|_{D^p_{\widetilde{\om}_{[p]}}}\lesssim\|f\|_{D^p_{h_{\om}}}$ for all $f\in\H(\D)$, provided $1<p<\infty$ and $\om\in\DD$. Therefore (ii) is proved.

To this end we observe that the proof above actually shows that, for each $0<p<\infty$ and $\om\in\DD$, we have $\|f\|_{D^p_{h_{\om}}}\asymp\|f\|_{D^p_{\widetilde{\om}_{[p]}}}$ for all $f\in\H(\D)$. Therefore there is no loss of information when passing from $D^p_{h_{\om}}$, which arises naturally when \eqref{Hardy1} and \eqref{Hardy2} are applied to dilatations and integrated, to the space $D^p_{\widetilde{\om}_{[p]}}$ if $\om\in\DD$.
\end{proof}

Our next goal is to show that the inclusions derived from Proposition~\ref{inclusion} might be strict unless $p=2$.

\begin{proposition}\label{H-A-different}
Let $\om$ be a radial weight and $0<p<\infty$. If $p\neq2$, then $\HL^{\om}_{p}$ and $A^p_{\om}$ are two different spaces.
\end{proposition}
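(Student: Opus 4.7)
The strategy is to use the monomials $z^N$ as test functions and combine the resulting asymptotics with the closed graph theorem. A direct calculation gives
\[
\|z^N\|_{A^p_\omega}^p=2\omega_{pN+1}
\quad\text{and}\quad
\|z^N\|_{\HL_p^\omega}^p=(N+1)^{p-2}\omega_{pN+1}.
\]
The standing hypothesis $\widehat{\omega}>0$ on $\mathbb{D}$ guarantees that every moment $\omega_{pN+1}$ is strictly positive, so both norms are well defined and nonzero. Their ratio equals $\tfrac12(N+1)^{p-2}$, which tends to $\infty$ as $N\to\infty$ when $p>2$ and to $0$ when $p<2$. Whenever $p\neq2$, no constant $C$ can make the two quasi-norms comparable on the polynomial subspace, and \emph{a fortiori} not on the full spaces.

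I would then convert this quantitative failure of norm equivalence into the qualitative statement that the two sets of analytic functions differ. Both $A^p_\omega$ and $\HL^\omega_p$ are complete under their (quasi-)norms---so are F-spaces---and both embed continuously into $\mathcal{H}(\mathbb{D})$ equipped with the topology of uniform convergence on compact subsets, because convergence in either norm forces convergence of each Maclaurin coefficient. For $\HL^\omega_p$ this is immediate from the definition, while for $A^p_\omega$ it follows from the orthogonality identity $2\omega_{2N+1}\widehat{f}(N)=\int_\mathbb{D} f(z)\bar z^N\omega(z)\,dA(z)$ together with H\"older's inequality when $p\geq1$, and from a standard subharmonicity/sub-mean-value argument when $0<p<1$. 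Suppose for contradiction that $A^p_\omega=\HL^\omega_p$ as sets of analytic functions. Then the identity map between them has closed graph in $A^p_\omega\times\HL^\omega_p$ (any joint limit must agree because both convergences identify a unique element of $\mathcal{H}(\mathbb{D})$), and the closed graph theorem for F-spaces forces the two quasi-norms to be equivalent on the common set, contradicting the monomial asymptotics above.

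The argument is therefore essentially complete once the two norms on $z^N$ are written down; no explicit witness function separating the two spaces needs to be constructed, as the closed graph theorem supplies the missing function abstractly. The one step that requires a line or two of verification is the continuity of the embeddings into $\mathcal{H}(\mathbb{D})$, which is routine and, notably, does not use any doubling or regularity condition on $\omega$ beyond the standing assumption $\widehat{\omega}>0$.
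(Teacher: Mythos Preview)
Your proof is correct and follows essentially the same approach as the paper: both test on the monomials $z^N$, compute $\|z^N\|_{A^p_\omega}^p=2\omega_{pN+1}$ and $\|z^N\|_{\HL^\omega_p}^p=(N+1)^{p-2}\omega_{pN+1}$, and observe that the ratio is unbounded when $p\ne2$. The only difference is that you spell out the closed-graph-theorem justification for why set equality would force norm equivalence, whereas the paper simply asserts this step without comment.
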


\begin{proof}
If the spaces were the same, then there would exist a constant $C=C(p,\omega)>1$ such that
$C^{-1}\|f\|_{\HL^{\om}_{p}}\le\|f\|_{A^p_\omega }\le C\|f\|_{\HL^{\om}_{p}}$ for all $f\in\H(\D)$. But this is impossible because the monomials $m_n(z)=z^n$ satisfy
	$$
	\frac{\|m_n\|_{A^p_\om}^p}{2}=\om_{np+1}=\frac{\|m_n\|_{\HL^p_\om}^p}{(n+1)^{2-p}},\quad n\in\N,
	$$
and this yields a contradiction as $n\to\infty$.
\end{proof}

The next proposition concerns the spaces $D^p_{\widetilde{\om}_{[p]}}$ and $A^p_{\om}$. It can be established by following the argument used in the proof of \cite[Proposition~4.3]{PR}.

\begin{proposition}
There exists $\om\in\DD\setminus\DDD$ such that $D^p_{\widetilde{\om}_{[p]}}$ and $A^p_{\om}$ are two different spaces, provided $p\neq2$.
\end{proposition}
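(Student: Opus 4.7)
The plan is to follow the argument of \cite[Proposition~4.3]{PR}: construct an explicit weight $\om\in\DD\setminus\DDD$ whose mass is concentrated on a rapidly separated sequence of thin annuli, and exhibit a single analytic function that distinguishes $A^p_\om$ from $D^p_{\widetilde{\om}_{[p]}}$. Concretely, I would fix a very rapidly increasing sequence $\{r_k\}\subset[0,1)$, say $1-r_{k+1}=(1-r_k)^2$, together with narrow sub-intervals $[r_k,s_k]$ for which $s_k-r_k\ll1-r_k$, and set $\om=\sum_k c_k\chi_{[r_k,s_k]}$ with amplitudes $c_k$ tuned so that $\whw$ is doubling in the sense of Lemma~\ref{Eqdhat}(ii) (hence $\om\in\DD$) but is essentially constant on each long plateau $(s_k,r_{k+1})$, which destroys the reverse-doubling condition and yields $\om\notin\DDD$.

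Next, I would look for a test function separating the two spaces. A preliminary check shows that on monomials $z^n$ both $\|z^n\|_{A^p_\om}^p$ and $\|z^n\|_{D^p_{\widetilde{\om}_{[p]}}}^p$ are comparable to $\whw(1-1/n)$ by Lemma~\ref{Eqdhat}(iii) together with a moment computation analogous to \eqref{Eq:CM-condition}, so individual monomials cannot witness the failure of equivalence; one needs a lacunary superposition $f=\sum_k a_k P_k$, where each $P_k$ is a polynomial kernel of degree $n_k\asymp 1/(1-r_k)$. For $0<p<2$ (so that $D^p_{\widetilde{\om}_{[p]}}\subset A^p_\om$ by Proposition~\ref{inclusion}(i)) the kernels $P_k$ are chosen to be pointwise small on the narrow support $[r_k,s_k]$ of $\om$ but to carry a substantial derivative mass on the plateau $(s_k,r_{k+1})$, where $\widetilde{\om}_{[p]}=\whw\cdot(1-|\cdot|)^{p-1}$ is large and roughly constant by doubling. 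Balancing the amplitudes $a_k$ appropriately makes $\sum_k|a_k|^p\|P_k\|_{A^p_\om}^p$ converge while $\sum_k|a_k|^p\|P_k'\|_{A^p_{\widetilde{\om}_{[p]}}}^p$ diverges. The case $2<p<\infty$ is handled dually by arranging $P_k$ so that $|f|$ is large on the support of $\om$ and $|f'|$ averages out across the plateaus, yielding $f\in D^p_{\widetilde{\om}_{[p]}}\setminus A^p_\om$.

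The main obstacle is the simultaneous calibration of the five sequences $\{r_k\},\{s_k\},\{c_k\},\{n_k\},\{a_k\}$: the weight geometry must place $\om$ inside $\DD$ yet outside $\DDD$; the polynomial kernels $P_k$ must realise the pointwise separation between the thin support of $\om$ and the wide plateau on which $\whw$ is substantial; and the amplitudes $a_k$ must produce convergence of one norm while the other diverges. The lacunarity of the degrees $n_k$, combined with the rapid decay $1-r_{k+1}\ll1-r_k$, is what guarantees that the blocks $a_kP_k$ are essentially mutually disjoint in both norms, so that the overall estimates reduce to block-wise contributions that can be summed explicitly and compared.
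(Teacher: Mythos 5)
Your construction of the weight is sound: a step weight $\om=\sum_k c_k\chi_{[r_k,s_k]}$ on sparsely separated thin annuli, with each annulus carrying a small fraction of the remaining mass, does land in $\DD\setminus\DDD$, and your observation that monomials cannot separate the two spaces (both $\|z^n\|_{A^p_\om}^p$ and $\|z^n\|_{D^p_{\widetilde{\om}_{[p]}}}^p$ are comparable to $\whw(1-\tfrac1n)$ by Lemma~\ref{Eqdhat}(iii)) is correct and is exactly the right warning sign. The gap is in the test-function mechanism. A polynomial block $P_k$ concentrated at the scale $1-|z|\asymp 1/n_k$ with $n_k\asymp 1/(1-r_k)$ satisfies $M_p(r,P_k)\asymp\|P_k\|_{H^p}$ for $r\ge r_k$ and $M_p(r,P_k')\lesssim n_k M_p(r,P_k)$, so its $A^p_\om$-norm (which picks up the full tail mass $\whw(r_{k+1})\asymp\whw(r_k)$ from the later annuli) and its $D^p_{\widetilde{\om}_{[p]}}$-norm (which picks up $n_k^p\cdot\whw(r_k)(1-r_k)^p\asymp\whw(r_k)$ from the plateau) are again comparable, just as for monomials; you cannot make a single block small where $\om$ lives but large in derivative on the plateau, because $M_p(r,\cdot)$ is nondecreasing and the growth accumulated across the plateau is then paid for at the next annulus. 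Consequently your final claim --- that the blocks $a_kP_k$ are ``essentially mutually disjoint in both norms, so that the overall estimates reduce to block-wise contributions'' --- is self-defeating: if it held, the two norms of $f=\sum_k a_kP_k$ would both reduce to $\sum_k|a_k|^p\cdot(\text{comparable block norms})$ and would therefore be comparable, which is the opposite of what you need.

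The separation has to come from how different scales \emph{interact}, and this is what the argument of \cite[Proposition~4.3]{PR}, which the paper invokes, actually exploits. Take a genuinely lacunary series $f(z)=\sum_k a_k z^{n_k}$ with $n_k\asymp 1/(1-r_k)$, so that $M_p(r,f)\asymp M_2(r,f)$ and $M_p(r,f')\asymp M_2(r,f')$ for every $p$. Then $\|f\|_{A^p_\om}^p\asymp\sum_j\bigl(\whw(r_j)-\whw(r_{j+1})\bigr)\bigl(\sum_{k\le j}|a_k|^2\bigr)^{p/2}$, because at the $j$-th annulus \emph{all} blocks with $k\le j$ contribute comparably, whereas $\|f\|_{D^p_{\widetilde{\om}_{[p]}}}^p\asymp\sum_j\whw(r_j)|a_j|^p$, because on the $j$-th plateau the top frequency dominates $M_2(r,f')$ and the factor $n_j^p(1-r_j)^p\asymp1$ converts the plateau integral into the full tail $\whw(r_j)$ rather than the increment. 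The two quantities differ precisely because $\om\notin\Dd$ makes the increments much smaller than the tails, and because $p\ne2$ makes $\bigl(\sum_{k\le j}|a_k|^2\bigr)^{p/2}$ differ from $\sum_{k\le j}|a_k|^p$. Calibrating $\whw(r_j)\asymp j^{-\a}$ (so the $j$-th annulus carries mass $\asymp j^{-\a-1}$, which keeps $\om\in\DD$) and taking $a_k\equiv1$ gives $\|f\|_{A^p_\om}^p\asymp\sum_j j^{p/2-\a-1}$ and $\|f\|_{D^p_{\widetilde{\om}_{[p]}}}^p\asymp\sum_j j^{-\a}$; choosing $\a$ strictly between $\min\{1,p/2\}$ and $\max\{1,p/2\}$ makes exactly one of the two sums converge, and together with the one-sided inclusions of Proposition~\ref{inclusion} this finishes the proof. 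I recommend you replace the pointwise-localization heuristic by this lacunary computation; without it the ``calibration of the five sequences'' you defer cannot be carried out.
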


The last auxiliary result stated in this section concerns analytic functions with non-negative Maclaurin coefficients tending to zero.

\begin{lemma}
Let $1\le p<\infty$ and $\om\in\DD$. Then
	$$
	\|f\|_{\HL_p^{\om}}\asymp\|f\|_{A^p_{\om}}\asymp\|f\|_{D^p_{\widetilde{\om}_{[p]}}}
	$$
for all $f\in\H(\D)$ such that its Maclaurin coefficients $\{\widehat{f}(n)\}_{n=0}^\infty$ form a sequence of non-negative numbers decreasing to zero.
\end{lemma}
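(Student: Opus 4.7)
The plan is to show that all three norms are equivalent to the common coefficient sum
$$
\Sigma:=\sum_{n\ge 0}\widehat{f}(n)^p(n+1)^{p-2}\om_{np+1}=\|f\|_{\HL_p^{\om}}^p.
$$
The equivalence $\|f\|_{A^p_{\om}}^p\asymp\Sigma$ is the easy step: each dilation $f_r(z)=f(rz)$ has non-negative decreasing Maclaurin coefficients $\widehat{f}(n)r^n$, so the classical Hardy--Littlewood theorem for monotone non-negative coefficients applies uniformly in $r\in[0,1)$ and yields $M_p^p(r,f)\asymp\sum_n\widehat{f}(n)^p r^{np}(n+1)^{p-2}$. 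Integrating against $2r\om(r)\,dr$ and exchanging sum with integral gives $\|f\|_{A^p_{\om}}^p\asymp\Sigma$.

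For $\|f\|_{D^p_{\widetilde{\om}_{[p]}}}^p\asymp\Sigma$, one direction of the inequality in each of the ranges $1\le p\le 2$ and $2\le p<\infty$ follows from Proposition~\ref{inclusion}. For the reverse directions I would write $F(z):=f(z)-f(0)$, so that $F$ itself has non-negative decreasing Maclaurin coefficients (starting at $n=1$) and $F'=f'$; the classical Hardy--Littlewood theorem then provides $M_p^p(s,F)\asymp\sum_{n\ge 1}\widehat{f}(n)^p s^{np}(n+1)^{p-2}$ uniformly in $s\in[0,1)$. Combining this with the classical embeddings $D^p_{p-1}\subset H^p$ for $p\le 2$ and $H^p\subset D^p_{p-1}$ for $p\ge 2$ applied to each dilation $F_s$ allows one, after the change of variables $z=sw$, to relate $\int_{|z|<s}|f'(z)|^p(s-|z|)^{p-1}\,dA(z)$ to $s\cdot M_p^p(s,F)$. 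The Fubini decomposition
$$
\int_{\D}|f'(z)|^p\widetilde{\om}_{[p]}(z)\,dA(z)=\int_0^1\om(s)\int_{|z|<s}|f'(z)|^p(1-|z|)^{p-1}\,dA(z)\,ds,
$$
combined with the asymptotic $(\widetilde{\om}_{[p]})_{np+1}\asymp(n+1)^{-p}\om_{np+1}$ obtained by applying Lemma~\ref{Eqdhat}(iii) to the weight $\widetilde{\om}_{[p]}\in\DD$ (itself established in the proof of Proposition~\ref{inclusion}), then produces $\|f'\|_{A^p_{\widetilde{\om}_{[p]}}}^p\asymp\sum_{n\ge 0}(n+1)^{p-2}\widehat{f}(n+1)^p\om_{np+1}$. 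Re-indexing and restoring the $|f(0)|^p=\widehat{f}(0)^p$ term recovers $\Sigma$.

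The principal obstacles are the case $p=1$, where the Calder\'on-type inclusion degenerates, and the reverse direction for $1<p<2$, where one needs an \emph{upper} bound on $M_p^p(r,f')$ that the one-sided classical Hardy--Littlewood inequality does not supply. For $1<p<2$ I would exploit the subadditivity $(1-|z|)^{p-1}\le(1-s)^{p-1}+(s-|z|)^{p-1}$ valid for $|z|<s$, splitting the inner integral in the Fubini decomposition into a part controlled by $M_p^p(s,F)$ via Calder\'on and a manageable remainder. The pointwise bound $|f'(z)|\le f'(|z|)\le f(|z|)/(1-|z|)$, which holds because the monotonicity of $\widehat{f}(n)$ gives $(n+1)\widehat{f}(n+1)\le\sum_{k=0}^n\widehat{f}(k)$, would be invoked in the endpoint case $p=1$, together with a finer estimate of $M_1(r,f')$ exploiting the classical containment $H^1\subset\HL_1$ applied to each dilation of $f'$ to produce the missing matching factor.
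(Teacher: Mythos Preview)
Your treatment of $\|f\|_{A^p_\om}^p\asymp\Sigma$ is correct and is exactly what the paper does: apply the classical Hardy--Littlewood theorem $M_p^p(r,f)\asymp\sum_n\widehat{f}(n)^pr^{np}(n+1)^{p-2}$ to each dilation and integrate against $r\om(r)\,dr$.

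The gap is in your handling of $\|f\|_{D^p_{\widetilde{\om}_{[p]}}}$. You invoke only the \emph{one-sided} embeddings $D^p_{p-1}\subset H^p$ ($p\le 2$) and $H^p\subset D^p_{p-1}$ ($p\ge 2$), and then find yourself needing precisely the reverse inequalities, which you try to patch with ad-hoc arguments (subadditivity of $(1-|z|)^{p-1}$, the pointwise bound $|f'(z)|\le f(|z|)/(1-|z|)$, etc.). These sketches are not complete: for instance, in the range $1<p<2$ you want an upper bound on $\|f'\|_{A^p_{\widetilde{\om}_{[p]}}}^p$, but the inequality $(1-|z|)^{p-1}\le(1-s)^{p-1}+(s-|z|)^{p-1}$ only helps for upper bounds, while the embedding $D^p_{p-1}\subset H^p$ yields a \emph{lower} bound on the Dirichlet integral over $\{|z|<s\}$ with weight $(s-|z|)^{p-1}$; the two pieces do not combine to close the loop. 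The $p=1$ endpoint is left as a vague outline.

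What you are missing is that the classical theorem for non-negative decreasing coefficients already gives the \emph{two-sided} estimate
$$
\|h\|_{H^p}^p\asymp\|h\|_{D^p_{p-1}}^p\asymp\|h\|_{\HL_p}^p,\qquad 1\le p<\infty,
$$
for every such $h$ (see \cite{HD}, \cite{P2} and \cite[Chapter~XII, Lemma~6.6]{Z}). Apply this to each dilation $f_r$ and integrate against $r\om(r)\,dr$ exactly as in the proof of Proposition~\ref{inclusion}. The integrated $\HL_p$-norm becomes $\|f\|_{\HL^\om_p}^p$, the integrated $H^p$-norm becomes $\|f\|_{A^p_\om}^p$, and the integrated $D^p_{p-1}$-norm becomes $\|f\|_{D^p_{h_\om}}^p$ with $h_\om(z)=\int_{|z|}^1(r-|z|)^{p-1}\om(r)\,dr$; the last paragraph of the proof of Proposition~\ref{inclusion} shows $\|\cdot\|_{D^p_{h_\om}}\asymp\|\cdot\|_{D^p_{\widetilde{\om}_{[p]}}}$ for all $0<p<\infty$ when $\om\in\DD$. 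This is the paper's argument, and it avoids all case distinctions and the endpoint difficulties you encountered.
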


\begin{proof}
It is well known that, for each $1\le p<\infty$, we have
	\begin{equation}\label{Eq:old-case}
	\|f\|^p_{H^p}\asymp\|f\|^p_{D^p_{p-1}}\asymp\|f\|^p_{\HL_p}
	\end{equation}
for all functions $f$ as in the statement on the lemma, see
\cite{HD}, \cite{P2} and \cite[Chapter~XII, Lemma~6.6]{Z} for details. Since, for each $0<r<1$, the Maclaurin coefficients of $f_r$ have the same property, we may integrate \eqref{Eq:old-case} as in the proof of Proposition~\ref{inclusion} to deduce the assertion.
\end{proof}

\section{Dualities}\label{section:dualities}

The following lemma describes the dual of the Dirichlet-type space $D^p_{\om}$ when $1<p<\infty$ and $\om\in\DD$.

\begin{lemma}\label{le:dualdp}
Let $1<p<\infty$ and $\om\in\DD$. Then $(D^p_{\om})^\star\simeq D^{p'}_{\om}$ via the pairing
$$
\langle f,g\rangle_{D^2_{\om}}=\langle f',g'\rangle_{A^2_{\om}}+f(0)\overline{g(0)}
$$
with equivalence of norms.
\end{lemma}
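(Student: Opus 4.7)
The plan is to reduce the assertion to the duality of the weighted Bergman space $A^p_\omega$. Consider the map $L: D^p_\omega \to A^p_\omega \oplus \C$ sending $f\mapsto (f', f(0))$. When $A^p_\omega \oplus \C$ is equipped with the $\ell^p$-direct sum norm $\|(h,c)\|=(\|h\|^p_{A^p_\omega}+|c|^p)^{1/p}$, this $L$ is a bijective isometry whose inverse sends $(h,c)$ to the primitive $c+\int_0^z h(\z)\,d\z$. Consequently $L^\star$ identifies $(D^p_\omega)^\star$ isometrically with $(A^p_\omega)^\star \oplus \C$ carrying the $\ell^{p'}$-norm.

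Given $\Lambda\in (D^p_\omega)^\star$, define $T\in (A^p_\omega)^\star$ by $T(h)=\Lambda(L^{-1}(h,0))$ and set $c=\overline{\Lambda(1)}$. Appealing to the Bergman space duality $(A^p_\omega)^\star\simeq A^{p'}_\omega$ realized via the $A^2_\omega$-pairing, valid for $1<p<\infty$ and $\om\in\DD$, we obtain a unique $k\in A^{p'}_\omega$ with $T(h)=\int_\D h\overline{k}\,\omega\,dA$ and $\|k\|_{A^{p'}_\omega}\asymp \|T\|$. Letting $g=L^{-1}(k,c)$, so that $g'=k$ and $g(0)=c$, the decomposition $f=f(0)+\int_0^z f'(\z)\,d\z$ yields, for every $f\in D^p_\omega$,
$$
\Lambda(f)=f(0)\Lambda(1)+T(f')=f(0)\overline{g(0)}+\langle f',g'\rangle_{A^2_\omega}=\langle f,g\rangle_{D^2_\omega}.
$$

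For the converse, given $g\in D^{p'}_\omega$, H\"older's inequality applied to the measure $\omega\,dA$ followed by H\"older's inequality on the two-term sequences $(\|f'\|_{A^p_\omega},|f(0)|)$ and $(\|g'\|_{A^{p'}_\omega},|g(0)|)$ yields $|\langle f,g\rangle_{D^2_\omega}|\le \|f\|_{D^p_\omega}\|g\|_{D^{p'}_\omega}$, so that every $g\in D^{p'}_\omega$ indeed induces a bounded functional with operator norm controlled by $\|g\|_{D^{p'}_\omega}$. The main obstacle is the appeal to the Bergman space duality $(A^p_\omega)^\star\simeq A^{p'}_\omega$ for $\om\in\DD$, $1<p<\infty$; once this is in hand, everything else is bookkeeping organized by the isometric splitting induced by $L$, and the norm equivalence in the final identification follows automatically because $L$ is isometric and the Bergman duality is norm-equivalent.
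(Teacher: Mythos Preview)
Your proof is correct and follows essentially the same approach as the paper: both arguments reduce the problem to the known duality $(A^p_\omega)^\star\simeq A^{p'}_\omega$ via the $A^2_\omega$-pairing by exploiting the isometry $f\mapsto (f',f(0))$ between $D^p_\omega$ and $A^p_\omega\oplus\C$. The paper phrases this via the integration operator $\mathcal{I}(F)(z)=\int_0^z F(\zeta)\,d\zeta$ rather than the direct-sum formalism, but the content is the same.
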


\begin{proof}
Let us first show that each $g\in D^{p'}_{\om}$ induces a bounded linear functional on $D^p_{\om}$.
H\"{o}lder's inequality yields
	\begin{equation*}
	\begin{split}
	|\langle f,g\rangle_{D^2_{\om}}|
	&\le\left(\int_{\D}|f'(z)|^p\om(z)\,dA(z)\right)^{\frac{1}{p}}
	\left(\int_{\D}|g'(z)|^{p'}\om(z)\,dA(z)\right)^{\frac{1}{p'}}+|f(0)||g(0)|\\
	&\lesssim\|f\|_{D^p_{\om}}\|g\|_{D^{p'}_{\om}},\quad f,g\in\H(\D).
	\end{split}
	\end{equation*}
Thus each $g\in D^{p'}_{\om}$ induces a bounded linear functional on $D^p_{\om}$ via the $D^2_{\om}$-pairing.

Let now $L$ be a bounded linear functional on $D^p_{\om}$. By the proof of \cite[Theorem~7]{PelRat2020}, we have $(A^p_{\om})^{\star}\simeq A^{p'}_{\om}$ via the $A^2_{\om}$-pairing with equivalence of norms. For each $f\in D^p_{\om}$, there exists $F=F_f\in A^p_{\om}$ such that $\mathcal{I}(F)=f-f(0)$, where $\mathcal{I}(F)(z)=\int_0^zF(\zeta)\,d\zeta$. Further, $\mathcal{I}$ is an isometric mapping from $A^p_{\om}$ to $D^p_{\om}$, in particular, it is bounded. Therefore the composition $L\circ\mathcal{I}$ is a bounded linear functional on $A^p_{\om}$, and hence there exists a unique $G\in A^{p'}_{\om}$ such that $\|G\|_{A^{p'}_{\om}}\lesssim\|L\circ\mathcal{I}\|\lesssim\|L\|$ and
	\begin{equation*}
	\begin{split}
	L(f)=L(f-f(0)+f(0))&=(L\circ\mathcal{I})(F)+f(0)L(1)\\
	&=2\int_{\D}F(z)\overline{G(z)}\om(z)dA(z)+f(0)L(1)\\
	&=2\int_{\D}f'(z)\overline{G(z)}\om(z)dA(z)+f(0)L(1).
	\end{split}
	\end{equation*}
Further, since $G\in A^{p'}_{\om}$, there exists a unique $g\in D^{p'}_{\om}$ such that $g'=G$ and $\overline{g(0)}=L(1)$. Consequently, there exists a unique $g\in D^{p'}_{\om}$ such that
	$$
	L(f)=\int_{\D}f'(z)\overline{g'(z)}\om(z)dA(z)+f(0)\overline{g(0)}=\langle f,g\rangle_{D^2_{\om}}.
	$$
Moreover, $\|g\|^{p'}_{D^{p'}_{\om}}=\|G\|^{p'}_{A^{p'}_{\om}}+|L(1)|^{p'}\lesssim\|L\|^{p'}$, and the assertion is proved.
\end{proof}

The next lemma shows that, for $0<p<1$, the dual of $D^p_{\om}$ can be identified with $\ZZ$ via the $D^2_W$-pairing where $W$ depends on $\om$ appropriately. The definition of $W$ is given in \eqref{Eq:W} below, and the identity $\widehat{W}(z)=\widehat{\om}(z)^{\frac1p}(1-|z|)^{\frac1p-1}$ explains why this choice appears to be convenient in concrete calculations. Observe that recently it was shown in \cite[Lemma~6]{PRW} that $(D^p_{p-1})^\star\simeq\B^2$ via the $A^2_{\frac1p-1}$-pairing, provided $0<p\le1$. The crucial step in the proof of this last-mentioned duality relies on technical tools related to coefficient multipliers of the Bloch spaces. Since the pairing is different in our setting, we can avoid many tedious calculations all together, and the proof itself becomes more straightforward and transparent via a suitable use of a Carleson embedding theorem for Bergman spaces.

\begin{lemma}\label{lemma:dual-Dirichlet}
Let $0<p<1$, $\om\in\DD$ and
	\begin{equation}\label{Eq:W}
	W(z)=W_{p,\om}(z)
	=\left(\frac1p-1\right)\widehat{\om}(z)^{\frac1p}(1-|z|)^{\frac1p-2}
	+\frac{\om(z)}{p}\widehat{\om}(z)^{\frac1p-1}(1-|z|)^{\frac1p-1},\quad z\in\D.
	\end{equation}
Then $(D^p_{\om})^\star\simeq\ZZ$ via the pairing
	\begin{equation}\label{D^1-pairing}
	\langle f,g\rangle_{D^2_{W}}=\langle f',g'\rangle_{A^2_{W}}+f(0)\overline{g(0)}
	\end{equation}
with equivalence of norms.
\end{lemma}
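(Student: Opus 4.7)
The proof has two directions: showing every $g \in \ZZ$ defines a bounded functional $L_g(f) := \langle f, g\rangle_{D^2_W}$ on $D^p_\om$ with $\|L_g\| \lesssim \|g\|_\ZZ$, and conversely that every bounded functional on $D^p_\om$ arises from a unique $g\in\ZZ$ with $\|g\|_\ZZ\lesssim\|L\|$. The structural tool underlying both is the identity
$$
\widehat W(z) = \int_{|z|}^1 W(s)\,ds = \widehat\om(z)^{\frac{1}{p}}(1-|z|)^{\frac{1}{p}-1},
$$
verified by differentiating the right-hand side in $|z|$ and noting that the two terms in the definition of $W$ arise precisely from the product rule. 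This identity is exactly what makes the $D^2_W$-pairing match the Carleson geometry of $A^p_\om$: the tail $\widehat W$ has the correct shape to convert between derivatives and weighted Bergman norms.

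For the sufficiency direction, split $\langle f,g\rangle_{D^2_W}=f(0)\overline{g(0)}+\int_\D f'(z)\overline{g'(z)}W(z)\,dA(z)$; the scalar piece is controlled trivially by $\|f\|_{D^p_\om}\|g\|_\ZZ$. For the main integral, integrate by parts in the radial direction, producing terms of the form $\int_\D\widehat W\cdot\{f'\overline{g''},\,f''\overline{g'},\,f'\overline{g'}/|z|\}\,dA$. Using $(1-|z|)|g''(z)|\le\|g\|_\ZZ$ on the $f'g''$ term, the equivalence $\|(1-|z|)f''\|_{A^p_\om}\asymp\|f'\|_{A^p_\om}$ (valid for $\om\in\DD$) on the $f''g'$ term, and the Bloch growth $|g'(z)|\lesssim\|g\|_\ZZ\log(2/(1-|z|))$ on the lower-order piece, each estimate reduces to the Carleson embedding
$$
\int_\D|h(z)|\widehat\om(z)^{\frac{1}{p}}(1-|z|)^{\frac{1}{p}-2}\,dA(z)\lesssim\|h\|_{A^p_\om},\qquad h\in A^p_\om.
$$
This is the case $q=1>p$ of \cite[Theorem~1(b)]{PR2015}: verifying $\mu(S(a))\asymp(\om(S(a)))^{1/p}$ for $d\mu=\widehat\om^{1/p}(1-|z|)^{1/p-2}\,dA$ reduces via Lemma~\ref{Eqdhat}(ii) to an elementary power integral once $\widehat\om(r)^{1/p}$ is bounded by $\widehat\om(|a|)^{1/p}((1-r)/(1-|a|))^{\beta/p}$ for $\beta$ sufficiently large.

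For the converse, given $L\in(D^p_\om)^\star$, the isometric embedding $\mathcal{I}(F)(z)=\int_0^z F(\zeta)\,d\zeta$ from $A^p_\om$ onto $\{f\in D^p_\om:f(0)=0\}$ reduces the problem to identifying $(A^p_\om)^\star$ under the $A^2_W$-pairing; the constant piece is absorbed by setting $\overline{g(0)}:=L(1)$. Producing $g$ amounts to showing that the restriction $L\circ\mathcal I$ is of the form $F\mapsto\int_\D F\overline{g'}W\,dA$ for some $g'\in\B$, after which $g\in\ZZ$. Testing $L$ against normalized test functions concentrated at a target point $a\in\D$ — for instance, suitably renormalized reproducing kernels of $D^2_W$ — yields a candidate $g$ whose second derivative at $a$ is comparable to the value of $L$ on the test function, so that $\sup_a(1-|a|)|g''(a)|\lesssim\|L\|$ follows from controlling the $D^p_\om$-norm of the test functions.

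The main obstacle is the converse direction: constructing the symbol $g$ from $L$ and establishing the sharp $\ZZ$-bound requires a reverse Carleson-type argument tailored to the weight $W$, which depends on the fine structure of $\om\in\DD$. By contrast, the forward direction becomes transparent once the identity for $\widehat W$ and the matching Carleson embedding are in place, which is precisely the advantage — stressed by the authors — of working with the $D^2_W$-pairing rather than the $A^2_{1/p-1}$-pairing used in \cite[Lemma~6]{PRW}, the latter forcing one through Bloch-space coefficient-multiplier arguments.
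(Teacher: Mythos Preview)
Your proposal has a genuine gap in the converse direction and a problematic step in the forward one.

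For the converse, you call it ``the main obstacle'' and sketch a test-function/reverse-Carleson approach without carrying it out. In fact this direction is the short one in the paper: once $L\circ\mathcal{I}\in(A^p_\om)^\star$, the known duality $(A^p_\om)^\star\simeq\B$ via the $A^2_W$-pairing---this is precisely \cite[Theorem~1]{PP2017}, and the weight $W$ in \eqref{Eq:W} was chosen so that $\widehat W=\widehat\om^{1/p}(1-|\cdot|)^{1/p-1}$ matches that result---hands you $G\in\B$ with $(L\circ\mathcal{I})(F)=\langle F,G\rangle_{A^2_W}$ and $\|G\|_\B\lesssim\|L\|$. Taking $g$ with $g'=G$ and $g(0)=\overline{L(1)}$ finishes. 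No reverse Carleson argument is needed; you have misplaced the difficulty.

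In the forward direction, the paper does not integrate by parts radially. It applies Green's formula inside the radial integral, producing a \emph{single} term $\int_\D|f''||g''|\cdot(\text{kernel})\,dA$, bounds the kernel by $\widehat\om(\zeta)^{1/p}(1-|\zeta|)^{1/p}/|\zeta|$, uses the Zygmund control on $g''$, and then passes from $f''$ back to $f'$ via the $L^1$-mean Cauchy estimate $M_1(r,f'')\lesssim M_1\big(\tfrac{1+r}{2},f'\big)/(1-r)$ combined with $\om\in\DD$. Your radial integration by parts instead produces separate $f'\overline{g''}$ and $f''\overline{g'}$ terms; for the second you invoke the Littlewood--Paley equivalence $\|(1-|z|)f''\|_{A^p_\om}\asymp\|f'\|_{A^p_\om}$, asserting it holds for $\om\in\DD$. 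This is not correct in general: that identity is essentially the statement $A^p_\om=D^p_{\om_{[p]}}$, which requires $\om$-regularity of $\DDD$-type (cf.\ \cite[Theorem~5]{PelRat2020} and the discussion around \eqref{eq:intro1}); for oscillating $\om\in\DD\setminus\DDD$ it can fail. Even if it held, an $L^p$ equivalence does not directly bound the $L^1$-integral $\int|f''||g'|\widehat W\,dA$ you actually face. The Green's-formula route avoids this entirely by never splitting into a $f''\overline{g'}$ term.
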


\begin{proof}
Let $f,g\in\H(\D)$. Then Green's formula and Fubini's theorem yield
	\begin{equation*}
	\begin{split}
	|\langle f',g'\rangle_{A^2_{W}}|
	&=\left|\int_\D f'(z)\overline{g'(z)}W(z)\,dA(z)\right|\\
	&=\left|\int_0^1\left(2\int_\D f''(rz)r\overline{g''(rz)r}\log\frac1{|z|}\,dA(z)+f'(0)\overline{g'(0)}\right)
	W(r)r\,dr\right|\\
	&\le2\int_\D|f''(\z)||g''(\z)|\left(\int_{|\z|}^1\log\frac{r}{|\z|}W(r)r\,dr\right)\,dA(\z)+|f'(0)||g'(0)|W(\D).
	\end{split}
	\end{equation*}
where	
	$$
	\int_{|\z|}^1\log\frac{r}{|\z|}W(r)r\,dr
	\le\log\frac{1}{|\z|}\int_{|\z|}^1W(r)r\,dr
	\le\frac{\whw(\z)^{\frac1p}(1-|\z|)^{\frac1p}}{|\z|}
	$$
by the inequality $-\log t\le\frac1t(1-t)$, valid for all $0<t\le1$, and the identity $\widehat{W}(z)=\widehat{\om}(z)^{\frac1p}(1-|z|)^{\frac1p-1}$. It follows that
	\begin{equation*}
	\begin{split}
	|\langle f',g'\rangle_{A^2_{W}}|
	&\lesssim\|g\|_\ZZ\int_\D|f''(\z)|\frac{\whw(\z)^\frac1p(1-|\z|)^{\frac1p-1}}{|\z|}\,dA(\z)+|f'(0)||g'(0)|\\
	&\lesssim\|g\|_\ZZ\int_\D|f''(\z)|\whw(\z)^\frac1p(1-|\z|)^{\frac1p-1}\,dA(\z)+|f'(0)||g'(0)|
	\end{split}
	\end{equation*}
because $M_1(r,f'')$ is non-decreasing. An easy calculation based on the Cauchy formula gives $M_1(r,h')\le4M_1\left(\frac{1+r}{2},h\right)(1-r)^{-1}$ for all $0<r<1$ and $h\in\H(\D)$. An application of this to function $h=f'$ together with the hypothesis $\om\in\DD$  gives
	\begin{equation}\label{vitui}
	|\langle f',g'\rangle_{A^2_{W}}|
	\lesssim\|g\|_\ZZ\int_\D|f'(\z)|\whw(\z)^\frac1p(1-|\z|)^{\frac1p-2}\,dA(\z)+|f'(0)||g'(0)|.
	\end{equation}
If $0<p\le q<\infty$, then, by \cite[Theorem~1(c)]{PR2015}, the Bergman space $A^p_\om$ is continuously embedded into the measure space $L^q_\mu$ if and only if $\mu(S)\lesssim\om(S)^\frac{q}{p}$ for all Carleson squares $S$. Now that $0<p<1$, we deduce
	\begin{equation*}
	\begin{split}
	\int_{S(a)}\whw(z)^{\frac1p}(1-|z|)^{\frac1p-2}\,dA(z)
	\le\whw(a)^{\frac1p}(1-|a|)\int_{|a|}^1(1-r)^{\frac1p-2}\,dr
	\asymp\left(\om(S(a))\right)^{\frac1p},\quad|a|\to1^-.
	\end{split}
	\end{equation*}
This together with \eqref{vitui} yields
\begin{equation}\label{eq:j1}
|\langle f',g'\rangle_{A^2_{W}}|\lesssim\|g\|_\ZZ\|f\|_{D^p_\om},\quad\text{ for all $f,g\in\H(\D)$}.
\end{equation} Hence each $g\in\ZZ$ induces a bounded linear functional on $D^p_{\om}$ via the pairing \eqref{D^1-pairing}.

Let $L\in(D^p_{\om})^\star$, and recall that $\mathcal{I}(F)(z)=\int_0^zF(\zeta)\,d\zeta$. Then $|(L\circ\mathcal{I})(F)|\lesssim\|\mathcal{I}(F)\|_{D^p_{\om}}=\|F\|_{A^p_{\om}}$ for all $F\in A^p_{\om}$.
 Therefore $L\circ\mathcal{I}\in(A^p_{\om})^\star$. It is known by \cite[Theorem 1]{PP2017} that $(A^p_{\om})^\star$ is isomorphic to the Bloch space via the $A^2_{W}$-pairing.
Hence there exists a unique $G\in\B$ such that $\|G\|_\B\lesssim\|L\circ\mathcal{I}\|\lesssim\|L\|$ and $(L\circ\mathcal{I})(F)=\langle F,G\rangle_{A^2_{W}}$ for all $F\in A^p_{\om}$.
Moreover, for each $f\in D^p_{\om}$, there exists $F=F_f\in A^p_{\om}$ such that $\mathcal{I}(F)=f-f(0)$. Therefore
	\begin{equation*}
	\begin{split}
	L(f)&=L(f-f(0)+f(0))=L(f-f(0))+f(0)L(1)\\
	&=L(\mathcal{I}(F))+f(0)L(1)
	=\langle F,G\rangle_{A^2_{W}}+f(0)L(1)\\
	&=\langle f',G\rangle_{A^2_{W}}+f(0)L(1),\quad f\in D^p_{\om}.
	\end{split}
	\end{equation*}
By picking up $g\in\H(\D)$ such that $g'=G$ and $g(0)=\overline{L(1)}$
we deduce $L(f)=\langle f,g\rangle_{D^2_{W}}$ for all $f\in D^p_{\om}$, and $\|g\|_{\ZZ}=\|G\|_\B+|L(1)|\lesssim\|L\|$.
\end{proof}





We next give an identification, useful for our purposes, for the dual space of $D^1_\om$ in the case when $\om\in\DD$. To this end, for a radial weight $\om$, consider the space
	$$
	\BMOA(\infty,\om)=\{f\in\H(\D): \|f\|_{\BMOA(\infty,\om)}=\sup_{0\le r<1}\big(\|f_r\|_{\BMOA}\whw(r)\big)<\infty\}.
	$$
Let now $\BMOA'(\infty,\om)$ denote space of primitives of functions in $\BMOA(\infty,\om)$ endowed with the norm
$$
\|f\|_{\BMOA'(\infty,\om)}=\|f'\|_{\BMOA(\infty,\om)}+|f(0)|.
$$

\begin{lemma}\label{le:duald1omhat}
Let $\om\in\DD$. Then $(D^1_{\om})^\star\simeq \BMOA'(\infty,\om)$ via the pairing
	$$
	\langle f,g\rangle_{D^2_{\om\whw}}=\langle f',g'\rangle_{A^2_{\om\whw}}+f(0)\overline{g(0)}
	$$
with equivalence of norms.
\end{lemma}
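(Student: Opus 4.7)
The plan is to mirror the two-step pattern used in Lemmas~\ref{le:dualdp} and~\ref{lemma:dual-Dirichlet}, reducing everything to the known identification of $(A^1_\om)^\star$ with $\BMOA(\infty,\om)$ via the $A^2_{\om\whw}$-pairing established in \cite[Theorem~4]{PelRat2020}. The algebraic observation that makes the reduction clean is that $\mathcal{I}(F)(z)=\int_0^z F(\z)\,d\z$ is an isometric isomorphism from $A^1_\om$ onto the codimension-one subspace $\{f\in D^1_\om:f(0)=0\}$, and the pairing splits naturally as $\langle f,g\rangle_{D^2_{\om\whw}}=\langle f',g'\rangle_{A^2_{\om\whw}}+f(0)\overline{g(0)}$.

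For the embedding direction, given $g\in\BMOA'(\infty,\om)$, I would bound $|f(0)||g(0)|$ trivially by $\|f\|_{D^1_\om}\|g\|_{\BMOA'(\infty,\om)}$, and apply \cite[Theorem~4]{PelRat2020} to the main term $\langle f',g'\rangle_{A^2_{\om\whw}}$ using $f'\in A^1_\om$ with $\|f'\|_{A^1_\om}\le\|f\|_{D^1_\om}$ and $g'\in\BMOA(\infty,\om)$. Adding the two estimates yields $\BMOA'(\infty,\om)\hookrightarrow(D^1_\om)^\star$ continuously.

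For surjectivity and uniqueness, given $L\in(D^1_\om)^\star$, I would consider $L\circ\mathcal{I}\in(A^1_\om)^\star$ with $\|L\circ\mathcal{I}\|\lesssim\|L\|$. The dual identification from \cite[Theorem~4]{PelRat2020} provides a unique $G\in\BMOA(\infty,\om)$ such that $(L\circ\mathcal{I})(F)=\langle F,G\rangle_{A^2_{\om\whw}}$ for all $F\in A^1_\om$ and $\|G\|_{\BMOA(\infty,\om)}\lesssim\|L\|$. Letting $g$ be the primitive of $G$ with $g(0)=\overline{L(1)}$, so that $g\in\BMOA'(\infty,\om)$ with $\|g\|_{\BMOA'(\infty,\om)}\lesssim\|L\|$, the decomposition $f=\mathcal{I}(f')+f(0)$ gives
\begin{equation*}
L(f)=(L\circ\mathcal{I})(f')+f(0)L(1)=\langle f',g'\rangle_{A^2_{\om\whw}}+f(0)\overline{g(0)}=\langle f,g\rangle_{D^2_{\om\whw}}
\end{equation*}
for every $f\in D^1_\om$, as desired.

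The main obstacle is not internal to this argument but external: one must verify that \cite[Theorem~4]{PelRat2020} delivers the identification of $(A^1_\om)^\star$ with $\BMOA(\infty,\om)$ in precisely the $A^2_{\om\whw}$-pairing appearing in the statement, since any mismatch in the weight would break the clean splitting above. Assuming that result as a black box, the present lemma becomes essentially an antiderivative and constant-term bookkeeping exercise on top of the $A^1_\om$-duality, entirely parallel to how Lemma~\ref{lemma:dual-Dirichlet} leveraged the Bloch-type duality from \cite{PP2017}.
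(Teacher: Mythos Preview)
Your proposal is correct and follows essentially the same route as the paper: both reduce to the duality $(A^1_\om)^\star\simeq\BMOA(\infty,\om)$ via the $A^2_{\om\whw}$-pairing from \cite[Theorem~4]{PelRat2020}, then lift it to $D^1_\om$ through the isometry $\mathcal{I}$ exactly as in Lemma~\ref{lemma:dual-Dirichlet}. The one point the paper makes explicit, which you correctly flag as the only real issue, is the verification that the pairing matches: it checks $\om\whw\in\DD$ and uses Lemma~\ref{Eqdhat}(iii) to show $(\om_x)^2\asymp(\om\whw)_x$, which is what allows \cite[Theorem~4]{PelRat2020} to be invoked with exactly the weight $\om\whw$.
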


\begin{proof}
First observe that $\om\whw\in\DD$, and
	$$
	(\om_x)^2
	\asymp\left(\widehat{\omega}\left(1-\frac{1}{x}\right)\right)^2
	\asymp\widehat{\omega\whw}\left(1-\frac{1}{x}\right)
	\asymp(\omega\whw)_x,\quad x\ge1,
	$$
by Lemma~\ref{Eqdhat}(iii). This together with \cite[Theorem~4]{PelRat2020}
gives $(A^1_\om)^\star\simeq\BMOA(\infty,\om)$ via the $A^2_{\om\whw}$-pairing. Therefore
	\begin{align*}
	|\langle f, g\rangle_{A^2_{\om\whw}}|&=\left|\int_{\D}f(z)\overline{g(z)}\om(z)\whw(z)\,dA(z)\right|\\
	&\lesssim\|g\|_{\BMOA(\infty,\om)}\|f\|_{A^1_\om}, \quad g\in\BMOA(\infty,\om),\quad f\in A^1_{\om},
	\end{align*}
and hence
\begin{align*}
|\langle f,g\rangle_{D^2_{\om\whw}}|
&\le|\langle f',g'\rangle_{A^2_{\om\whw}}|+|f(0)||g(0)|\\
&\lesssim\|g'\|_{\BMOA(\infty,\om)}\|f'\|_{A^1_\om}+|f(0)||g(0)|\\
&\lesssim\|g\|_{\BMOA'(\infty,\om)}\|f\|_{D^1_\om},\quad g\in\BMOA'(\infty,\om),\quad f\in D^1_{\om}.
\end{align*}
Thus each $g\in\BMOA'(\infty,\om)$ induces a bounded linear functional on $D^1_\om$ via the $D^2_{\om\whw}$-pairing.

The proof can be completed by analogous arguments  to those used in the second part of the proof of Lemma~\ref{lemma:dual-Dirichlet}. The only extra ingredient needed is \cite[Theorem 4]{PelRat2020} which states that $(A_{\om}^1)^\star\simeq \BMOA(\infty,\om)$ via $A^2_{\om\whw}$-pairing. Since the details do not give us anything new, we omit them.
\end{proof}

The dual space of the Banach space $\HL_p^{\om}$ with $1<p<\infty$ can be described as follows. The proof is straightforward and hence omitted.

\begin{lemma}\label{le:dualHLp}
Let $1<p<\infty$ and $\om\in\DD$. Then $(\HL^{\om}_p)^\star\simeq \HL^{\om}_{p'}$ via the pairing
	$$
	\langle f,g \rangle_{A^2_\om}
	=\lim_{r\rightarrow1^-}\sum_{n=0}^{\infty}\widehat{f}(n)\overline{\widehat{g}(n)}\om_{2n+1}r^n
	$$
with equivalence of norms.
\end{lemma}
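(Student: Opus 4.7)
The plan is to reduce the duality to the classical $\ell^p$-$\ell^{p'}$ duality via the weighted coefficient map. First, set $w_n=(n+1)^{p-2}\om_{np+1}$, so that $\|f\|^p_{\HL^\om_p}=\sum_n|\widehat f(n)|^p w_n$. The map $\Phi_p(f)=\{\widehat f(n)\,w_n^{1/p}\}_n$ is an isometric isomorphism from $\HL^\om_p$ onto $\ell^p$. The only non-trivial point for surjectivity is that every $\ell^p$-sequence has to correspond to Maclaurin coefficients of a function analytic in $\D$; this follows because Lemma~\ref{Eqdhat}(iv), applied with $x=1$ and $y$ varying, yields the polynomial lower bound $\om_x\gtrsim x^{-\eta}$, so any such sequence has polynomial growth and defines a function in $\H(\D)$.

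Once the identification $\HL^\om_p\simeq\ell^p$ is in hand, the classical $\ell^p$-$\ell^{p'}$ duality $(\ell^p)^\star\simeq\ell^{p'}$ via $\langle x,y\rangle=\sum_n x_n\overline{y_n}$ transports to $(\HL^\om_p)^\star\simeq\HL^\om_{p'}$ with equivalence of norms. It remains to check that the induced pairing is comparable to $\sum_n\widehat f(n)\overline{\widehat g(n)}\om_{2n+1}$. With $x_n=\Phi_p(f)(n)$ and $y_n=\Phi_{p'}(g)(n)$ one computes
\begin{equation*}
\sum_n x_n\overline{y_n}=\sum_n\widehat f(n)\overline{\widehat g(n)}\,(n+1)^{(p-2)/p+(p'-2)/p'}\,\om_{np+1}^{1/p}\,\om_{np'+1}^{1/p'}.
\end{equation*}
The exponent of $(n+1)$ is $2-(2/p+2/p')=0$. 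Moreover, since $np+1$, $np'+1$, and $2n+1$ are all comparable to $n$, Lemma~\ref{Eqdhat}(iv) combined with the trivial monotonicity $x\mapsto\om_x$ gives $\om_{np+1}\asymp\om_{np'+1}\asymp\om_{2n+1}$, whence $\om_{np+1}^{1/p}\,\om_{np'+1}^{1/p'}\asymp\om_{2n+1}^{1/p+1/p'}=\om_{2n+1}$. The two pairings therefore agree up to multiplicative constants.

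The $\lim_{r\to 1^-}$ in the statement is only a cosmetic safeguard: H\"older's inequality applied in the weighted form above shows that
\begin{equation*}
\sum_n|\widehat f(n)|\,|\widehat g(n)|\,\om_{2n+1}\lesssim\|f\|_{\HL^\om_p}\|g\|_{\HL^\om_{p'}},
\end{equation*}
so the series converges absolutely, the Abel sum coincides with its ordinary sum, and the pairing is well-defined on the whole product $\HL^\om_p\times\HL^\om_{p'}$.

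The only place where the hypothesis $\om\in\DD$ plays a genuine role is in the comparability of the moments $\om_{np+1}$, $\om_{np'+1}$, and $\om_{2n+1}$; the rest of the argument is purely linear-algebraic at the level of Maclaurin coefficients. Consequently no serious obstacle is anticipated, which is consistent with the authors' remark that the proof is straightforward and hence omitted.
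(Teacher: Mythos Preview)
Your argument is correct and is precisely the ``straightforward'' proof the paper alludes to but omits: identify $\HL^\om_p$ isometrically with $\ell^p$ via the weighted Maclaurin coefficients, invoke the classical $(\ell^p)^\star\simeq\ell^{p'}$ duality, and use Lemma~\ref{Eqdhat}(iv) (and the monotonicity of the moments) to see that $\om_{np+1}\asymp\om_{np'+1}\asymp\om_{2n+1}$, which matches the induced pairing with $\langle\cdot,\cdot\rangle_{A^2_\om}$. One minor point of phrasing: ``the two pairings agree up to multiplicative constants'' should be read as ``the coefficient multipliers are comparable,'' which is enough to conclude that each $L\in(\HL^\om_p)^\star$ is represented by a unique $g\in\HL^\om_{p'}$ with $\|g\|_{\HL^\om_{p'}}\asymp\|L\|$.
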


\section{Case $0<p<1$}\label{Sec:p<1}

We begin with the following lemma which concerns the range $0<p\le1$. It proves the equivalence between (ii) and (iii) in Theorem~\ref{intro:0<p<1}, and since $\om_x\asymp\widehat{\om}\left(1-\frac1x\right)$ due Lemma~\ref{Eqdhat}(iii), it also gives an equivalent condition in terms of moments.

\begin{lemma}\label{AuxLemma}
Let $0<p\le1$ and $\om\in\DD$. Then the following statements are equivalent:
\begin{itemize}
\item[(i)] $\displaystyle \sup_{0\leq r<1}\frac{(1-r)^{2-\frac1p}}{\whw(r)^{\frac1p}}<\infty$;
\item[(ii)]$\displaystyle\sup_{0\le r<1}\left((1-r)\sum_{k=1}^{\infty}\frac{r^{2(k-1)}}{\whw(1-\frac1k)^{\frac1p}k^{2-\frac1p}}\right)<\infty$;
\item[(iii)] $I:D^p_{\om}\rightarrow D^1_0$ is bounded.
\end{itemize}
Similarly, the following statements are equivalent:
\begin{itemize}
\item[(i)] $\displaystyle\limsup_{r\to1^-}\frac{(1-r)^{2-\frac1p}}{\whw(r)^{\frac1p}}=0$;
\item[(ii)]$\displaystyle\limsup_{r\to1^-}\left((1-r)\sum_{k=1}^{\infty}\frac{r^{2(k-1)}}{\whw(1-\frac1k)^{\frac1p}k^{2-\frac1p}}\right)=0$;
\item[(iii)] $I:D^p_{\om}\rightarrow D^1_0$ is compact.
\end{itemize}
\end{lemma}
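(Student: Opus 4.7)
The plan is to establish (i) $\iff$ (iii) by reducing to the Carleson embedding theorem for weighted Bergman spaces, and to prove (i) $\iff$ (ii) by a direct two-sided estimate of the series. For (i) $\iff$ (iii), the first step is to observe that $f\in D^p_{\om}$ iff $f'\in A^p_{\om}$, and $f\in D^1_0$ iff $f'\in A^1(dA)$, so that boundedness of $I:D^p_{\om}\to D^1_0$ is equivalent to the continuous embedding $A^p_{\om}\hookrightarrow L^1(dA)$. This is precisely the Carleson embedding already invoked in the proof of Proposition~\ref{inclusion}: by \cite[Theorem~1(c)]{PR2015}, for $0<p\le 1$ the embedding $A^p_{\om}\hookrightarrow L^1(dA)$ holds iff $|S(a)|\lesssim\om(S(a))^{1/p}$ on all Carleson squares. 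Plugging in $|S(a)|\asymp(1-|a|)^2$ and $\om(S(a))\asymp\whw(a)(1-|a|)$ (valid under $\om\in\DD$) reduces this to condition (i). The compactness variant follows the same route via the little-$o$ version of the Carleson condition.

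For (i) $\iff$ (ii), let $S_p(r)$ denote the expression inside the parentheses in (ii). The direction (i) $\Rightarrow$ (ii) is short: specializing (i) at $r=1-1/k$ gives $\whw(1-1/k)^{-1/p}\le M k^{2-1/p}$, where $M$ is the supremum in (i), so $S_p(r)\le M(1-r)\sum_{k\ge 1}r^{2(k-1)}\lesssim M$. For the reverse implication, I would restrict the sum to indices $k\in[1/(2(1-r)),\,1/(1-r)]$. On this window, $r^{2(k-1)}\asymp 1$, $k^{2-1/p}\asymp(1-r)^{1/p-2}$, and $\whw(1-1/k)\asymp\whw(r)$ uniformly. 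The last comparability comes from $1-1/k\in[2r-1,r]$ together with the doubling property of $\whw\in\DD$ (Lemma~\ref{Eqdhat}(ii)), which gives $\whw(2r-1)\asymp\whw(r)$. Since the window contains $\asymp 1/(1-r)$ indices, summing yields $S_p(r)\gtrsim(1-r)^{2-1/p}/\whw(r)^{1/p}$, and taking the sup over $r$ gives (ii) $\Rightarrow$ (i).

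For the compactness analog of (i) $\iff$ (ii), the same two-sided estimate applies in the limit $r\to 1^-$, supplemented by splitting $S_p(r)$ at a fixed threshold $k_0$: the finite head is of order $O_{\varepsilon}(1)$ and is absorbed by the prefactor $(1-r)$, while the tail inherits the $\varepsilon$-smallness from the limsup version of (i). Feeding this back through the compact Carleson embedding closes the circle with compactness of $I:D^p_{\om}\to D^1_0$.

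The principal technical point I anticipate is the uniform comparability $\whw(1-1/k)\asymp\whw(r)$ across the chosen window, where the doubling property of $\whw$ at the scale $1-r$ is essential; once this is in hand, everything else is either direct bookkeeping or a clean appeal to the cited Carleson embedding theorem.
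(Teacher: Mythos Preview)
Your argument is correct. For (i)$\iff$(iii) you do essentially what the paper does: reduce to a Carleson embedding $A^p_\om\hookrightarrow L^1(dA)$ and read off condition (i) from the geometric characterization; the paper cites \cite[Theorem~2.1]{PR} and \cite[Theorem~3]{PRS2} (the latter for compactness), whereas you invoke \cite[Theorem~1(c)]{PR2015} for boundedness and its little-$o$ counterpart for compactness, but the substance is identical.

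For (i)$\iff$(ii) your route genuinely differs from the paper's. The paper replaces the series by an integral, performs the change of variable $t=1-\frac1x$, splits the integral at $t=r$, and uses Lemma~\ref{Eqdhat}(ii) together with a further substitution to obtain the \emph{pointwise asymptotic}
\[
\sum_{k=1}^{\infty}\frac{r^{2(k-1)}}{\whw(1-\frac1k)^{\frac1p}k^{2-\frac1p}}\asymp\frac{(1-r)^{1-\frac1p}}{\whw(r)^{\frac1p}},\quad r\to1^-,
\]
from which both the sup- and limsup-equivalences follow at once. Your approach is more elementary: for (i)$\Rightarrow$(ii) you simply plug the bound $\whw(1-1/k)^{-1/p}\le Mk^{2-1/p}$ termwise and sum a geometric series, while for (ii)$\Rightarrow$(i) you restrict to the window $k\asymp 1/(1-r)$ and use the doubling of $\whw$ to get uniform comparability on that block. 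This is clean and avoids the integral machinery entirely. The trade-off is that the paper's computation yields the full two-sided asymptotic above as a reusable byproduct; indeed, the estimates labelled \eqref{theq}--\eqref{theq2} are invoked again verbatim in the proof of Theorem~\ref{compactDpless1}. Your argument proves the lemma but does not produce this stronger pointwise relation.
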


\begin{proof}
We first observe that in both sets of statements (i) and (iii) are equivalent for all $0<p\le1$ by the Carleson embedding theorems \cite[Theorem~2.1]{PR} and \cite[Theorem~3]{PRS2}. We next show that (i) and (ii) are equivalent. Direct calculations show that
	\begin{equation}\label{theq}
	\begin{split}
	\sum_{k=1}^{\infty}\frac{r^{2(k-1)}}{\whw(1-\frac1k)^{\frac1p}k^{2-\frac1p}}&\asymp
	\int^{\infty}_1\frac{r^{2x}}{\whw(1-\frac1x)^{\frac1p}x^{2-\frac1p}}dx\\
	&=\int_0^1\frac{r^{\frac{2}{1-t}}(1-t)^{-\frac1p}}{\whw(t)^{\frac1p}}dt
	=I_1(r)+I_2(r),
	\end{split}
	\end{equation}
where
	$$
	I_1(r)=\int_0^r\frac{r^{\frac{2}{1-t}}(1-t)^{-\frac1p}}{\whw(t)^{\frac1p}}dt \quad\textrm{and}\quad
	I_2(r)=\int_r^1\frac{r^{\frac{2}{1-t}}(1-t)^{-\frac1p}}{\whw(t)^{\frac1p}}dt.
	$$
Moreover,
	\begin{equation}\label{theq1}
	I_1(r)
	\lesssim\frac{(1-r)^{1-\frac1p}}{\whw(r)^{\frac1p}}
	\lesssim\int_r^{\frac{1+r}{2}}\frac{r^{\frac{2}{1-t}}(1-t)^{-\frac1p}}{\whw(t)^{\frac1p}}dt
	\le I_2(r),\quad r\to1^-.
	\end{equation}
Further, by Lemma~\ref{Eqdhat}(ii) and the change of variable $r^\frac2{1-t}=s$, we deduce
\begin{equation}\label{theq2}
\begin{split}
I_2(r)
&\lesssim\left(\frac{(1-r)^{\beta}}{\whw(r)}\right)^{\frac1p}\int_{r}^1r^{\frac{2}{1-t}}(1-t)^{-\frac{1+\beta}p}dt\\
&=\left(\frac{(1-r)^{\beta}}{\whw(r)}\right)^{\frac1p}2^{1-\frac{1+\beta}{p}}\left(\log\frac1r\right)^{1-\frac{1+\beta}{p}}\int_0^{r^\frac{2}{1-r}}\left(\log\frac1s\right)^{\frac{1+\beta}{p}-2}\,ds
\\ & \le
\left(\frac{(1-r)^{\beta}}{\whw(r)}\right)^{\frac1p}2^{1-\frac{1+\beta}{p}}\left(\log\frac1r\right)^{1-\frac{1+\beta}{p}}\int_0^{1}\left(\log\frac1s\right)^{\frac{1+\beta}{p}-2}\,ds
\\
&\asymp\frac{(1-r)^{1-\frac1p}}{\whw(r)^{\frac1p}},\quad r\to1^-.
\end{split}
\end{equation}
By combining \eqref{theq}, \eqref{theq1} and \eqref{theq2}, we obtain
	\begin{equation}\label{pililo}
	\sum_{k=1}^{\infty}\frac{r^{2(k-1)}}{\whw(1-\frac1k)^{\frac1p}k^{2-\frac1p}}\asymp
	\frac{(1-r)^{1-\frac1p}}{\whw(r)^{\frac1p}},\quad r\to1^-.
	\end{equation}
Hence (ii) is equivalent to (iii) for all $0<p\le 1$.
\end{proof}

We next prove the boundedness part for the case $0<p<1$ of Theorem~\ref{intro:0<p<1}. As an immediate consequence we deduce that if $\om\in\DD$ and $p\le\frac12$ then $T(D^p_{\om}, H^{\infty})$ consists of constant functions only.

\begin{theorem}\label{D^pless1}
Let $0<p<1$, $\om\in\DD$ and $g\in H^{\infty}$. Then the following statements are equivalent:
\begin{itemize}
\item[(i)] $T(D^p_{\om}, H^{\infty})$ consists of constant functions only;
\item[(ii)] $\displaystyle \sup_{0\leq r<1}\frac{(1-r)^{2-\frac1p}}{\whw(r)^{\frac1p}}=\infty$;
\item[(iii)]$\displaystyle\sup_{0\le r<1}\left((1-r)\sum_{k=1}^{\infty}\frac{r^{2(k-1)}}{\whw(1-\frac1k)^{\frac1p}k^{2-\frac1p}}\right)=\infty$;
\item[(iv)] $I: D^p_{\om}\rightarrow D^1_0$ is unbounded.
\end{itemize}
\end{theorem}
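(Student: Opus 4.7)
Lemma~\ref{AuxLemma} already supplies the equivalences (ii)$\Leftrightarrow$(iii)$\Leftrightarrow$(iv), so the task reduces to establishing (i)$\Leftrightarrow$(iv). One direction is comparatively direct, the other is the real difficulty, and I would treat them separately.

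\textbf{Direction (iv)$\Rightarrow$(i).} Assume $I:D^p_\om\to D^1_0$ is unbounded and fix any non-constant $g\in H^\infty$; the aim is to show $T_g:D^p_\om\to H^\infty$ is unbounded. Using (iv) and Lemma~\ref{AuxLemma} pick $r_n\to 1^-$ realizing $(1-r_n)^{2-\frac1p}/\whw(r_n)^{\frac1p}\to\infty$, and choose arguments $\theta_n$ such that $|g'(r_n e^{i\theta_n})|$ is bounded below. This last step is possible for non-constant $g\in H^\infty$ since, taking the first index $k\ge 1$ with $\widehat{g}(k)\neq 0$, the mean $M_2(r,g')$ is bounded below by $k|\widehat{g}(k)|r^{k-1}$, so on each circle $|z|=r_n$ some angular direction realizes this lower bound. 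Set $a_n=r_ne^{i\theta_n}$ and use normalized test functions $f_{a_n}(\zeta)=c_n\bigl((1-\overline{a_n}\zeta)^{-(c-1)}-1\bigr)$ (so that $f_{a_n}'$ is a Cauchy kernel at $a_n$) with $c$ large and $c_n$ chosen, via the standard integral estimate for $\om\in\DD$ and Lemma~\ref{Eqdhat}, so that $\|f_{a_n}\|_{D^p_\om}\asymp 1$. Parametrising $\zeta=ta_n$ in $T_g(f_{a_n})(a_n)=\int_0^{a_n}f_{a_n}g'\,d\zeta$ concentrates the mass near $t=1$, and a direct estimate gives $|T_g(f_{a_n})(a_n)|\gtrsim|g'(a_n)|(1-r_n)^{2-\frac1p}/\whw(r_n)^{\frac1p}$, which diverges.

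\textbf{Direction (i)$\Rightarrow$(iv) (the main obstacle).} Argue contrapositively: assuming $(1-r)^{2-\frac1p}/\whw(r)^{\frac1p}\le C$, construct a non-constant $g\in H^\infty$ with $T_g:D^p_\om\to H^\infty$ bounded. The abstract criterion \cite[Theorem~2.2]{CPPR} reduces the boundedness of $T_g$ to the uniform boundedness of the point-evaluation functionals $\phi_z(f):=T_g(f)(z)=\int_0^z fg'$ on $D^p_\om$; via the duality $(D^p_\om)^\star\simeq\ZZ$ from Lemma~\ref{lemma:dual-Dirichlet}, this translates into $\sup_z\|h_z\|_\ZZ<\infty$, where $h_z$ is the Zygmund representative of $\phi_z$ under the $D^2_W$-pairing. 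I would identify $h_z$ explicitly through the integration by parts $\phi_z(f)=f(z)g(z)-f(0)g(0)-\int_0^z f'g$ and the reproducing structure of $D^p_\om$ expressed via the $D^2_W$-pairing, and then choose $g$ so that, combined with the identity $\widehat{W}(z)=\whw(z)^{\frac1p}(1-|z|)^{\frac1p-1}$ and the hypothesis, a uniform Zygmund bound on $h_z$ falls out. Here the Zygmund duality (rather than the Bloch duality used in \cite{PRW}) is essential: it furnishes the right pairing to accommodate the full $\DD$ class while keeping the resulting estimates transparent, and converting the scalar bound $(1-r)^{2-\frac1p}/\whw(r)^{\frac1p}\le C$ into $\sup_z\|h_z\|_\ZZ<\infty$ via an explicit choice of $g$ is precisely where the technical heart of the proof lies.
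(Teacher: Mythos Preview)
Your division of labor is right (Lemma~\ref{AuxLemma} handles (ii)$\Leftrightarrow$(iii)$\Leftrightarrow$(iv)), and your plan for (i)$\Rightarrow$(iv) is close to the paper's: assume the supremum in (iii) is finite and exhibit a non-constant $g$ with $T_g$ bounded via the duality $(D^p_\om)^\star\simeq\ZZ$ and the criterion of \cite[Theorem~2.2]{CPPR}. The paper does exactly this, taking $g$ to be a monomial $m_n$ and checking directly that $\sup_{z\in\D}\|G^{D^2_W}_{m_n,z}\|_\ZZ\lesssim\sup_{0\le r<1}(1-r)\sum_k r^{2(k-1)}\whw(1-\tfrac1k)^{-1/p}k^{1/p-2}<\infty$. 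Your ``integration by parts'' route to $h_z$ is unnecessary: the representative $G^{D^2_W}_{g,z}$ is given explicitly by $\overline{G^{D^2_W}_{g,z}(w)}=\int_0^z g'(\zeta)\overline{K^{D^2_W}_\zeta(w)}\,d\zeta$ with $K^{D^2_W}_\zeta$ the reproducing kernel of $D^2_W$, and the monomial calculation is then a few lines.

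The genuine gap is in your (iv)$\Rightarrow$(i) argument. Knowing only $|g'(a_n)|\ge c_0$ at one point gives no control over $g'$ on the radial segment to $a_n$, and in particular not on the concentration region $\{t:1-t\lesssim 1-r_n\}$ where your kernel $f_{a_n}(ta_n)$ is large. For $g\in H^\infty$ the Cauchy estimate gives only $|g''(z)|\lesssim\|g\|_{H^\infty}(1-|z|)^{-2}$, so $g'$ may vary by order $(1-r_n)^{-1}$ across that region, swamping the pointwise lower bound $|g'(a_n)|\gtrsim1$; the asserted lower bound $|T_g(f_{a_n})(a_n)|\gtrsim|g'(a_n)|(1-r_n)^{2-1/p}\whw(r_n)^{-1/p}$ therefore does not follow. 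The paper avoids this by staying on the dual side for this direction as well: if $g$ is non-constant with $\widehat{g}(N+1)\ne0$, it estimates $\sup_{z\in\D}\|G^{D^2_W}_{g,z}\|_\ZZ$ from below by choosing $w=z$, integrating in $\theta$, and applying Hardy's inequality $\sum_n|a_n|/(n+1)\lesssim\|{\textstyle\sum} a_ne^{in\theta}\|_{L^1}$ to isolate the single coefficient $\widehat{g}(N+1)$. That step is precisely what absorbs the possible oscillation of $g'$ and yields $\sup_z\|G^{D^2_W}_{g,z}\|_\ZZ\gtrsim$ the quantity in (iii), hence $=\infty$.
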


\begin{proof}
First observe that (ii)-(iv) are equivalent by Lemma~\ref{AuxLemma}. We next show that (i) and (iii) are equivalent. A careful inspection of the proof of \cite[Theorem~1.1]{CPPR} shows that it can be applied to the space $D^p_{\om}$ in the case $0<p<1$ even if it is not a Banach space. By this observation and Lemma~\ref{lemma:dual-Dirichlet}, $T_g: D^p_{\om}\rightarrow H^{\infty}$ is bounded if and only if $\sup_{z\in\D}\|G^{D^2_{W}}_{g,z}\|_{\mathcal{Z}}<\infty$, where
$\overline{G^{D^2_{W}}_{g,z}(w)}=\int_0^zg'(\zeta)\overline{K^{D^2_{W}}_{\zeta}(w)}\,d\zeta$ and
$K^{D^2_{W}}_{\zeta}(w)=1+\sum_{k=1}^{\infty}\frac{\overline{\zeta}^kw^k}{2k^2W_{2k-1}}$ is the reproducing kernel of $D^2_{W}$ at the point $\zeta\in\D$.

Assume (iii) holds. A direct calculation shows that
	\begin{equation*}
	\begin{split}
	\sup_{z\in\D}\|G^{D^2_{W}}_{g,z}\|_{\mathcal{Z}}
	&\ge\sup_{z\in\D}\left(\sup_{w\in\D}(1-|w|^2)|G^{D^2_{W}}_{g,z})^{''}(w)|\right)\\
	&=\sup_{z\in\D}\left(\sup_{w\in\D}(1-|w|^2)\left|\sum_{k=2}^{\infty}\left(\frac{k-1}{2kW_{2k-1}}
	\sum_{n=0}^{\infty}\frac{\overline{\widehat{g}(n+1)}(n+1)\overline{z}^{n+k+1}}{n+k+1}\right)w^{k-2}\right|\right)\\
	&\gtrsim\sup_{z\in\D}(1-|z|^2)\left|\sum_{k=2}^{\infty}\sum_{n=0}^{\infty}
	\frac{(k-1)\overline{\widehat{g}(n+1)}(n+1)|z|^{2k-4}\overline{z}^{n+3}}{2k(n+k+1)W_{2k-1}}\right|.
	\end{split}
	\end{equation*}
Then Fubini's theorem and Hardy's inequality yield
	\begin{equation*}
	\begin{split}
	\sup_{z\in\D}\|G^{D^2_{W}}_{g,z}\|_{\mathcal{Z}}&\gtrsim\sup_{0\leq r<1}(1-r)
	\int_0^{2\pi}\left|\sum_{n=0}^{\infty}\left(\sum_{k=2}^{\infty}\frac{(k-1)r^{2k+n-1}}{2k(n+k+1)W_{2k-1}}\right)
	\overline{\widehat{g}(n+1)}(n+1)e^{i\theta(n+3)}\right|d\theta\\
	&\gtrsim\sup_{0\le r<1}(1-r)\sum_{n=0}^{\infty}\left(\sum_{k=2}^{\infty}
	\frac{r^{2k+n-1}}{(n+k+1)W_{2k-1}}\right)|\overline{\widehat{g}(n+1)}|.
	\end{split}
	\end{equation*}
If $g\in T(D^p_{\om}, H^{\infty})$ is not a constant, then there exists an $N\in\mathbb{N}\cup\{0\}$ such that $\widehat{g}(N+1)\neq0$. Since $W\in\DDD$ by the proof of \cite[Theorem~1]{PP2017}, Lemma~\ref{Eqdhat}(iii) and the identity $\widehat{W}(z)=\widehat{\om}(z)^{\frac1p}(1-|z|)^{\frac1p-1}$ imply $W_{2k-1}\asymp\widehat{\om}\left(1-\frac1k\right)^\frac1pk^{1-\frac1p}$ for all $k\in\N$. Now that $\widehat{g}(N+1)\neq0$, we deduce
	$$
	\sup_{z\in\D}\|G^{D^2_{W}}_{g,z}\|_{\mathcal{Z}}
	\gtrsim\limsup_{0\le r<1}\left((1-r)\sum_{k=1}^{\infty}\frac{r^{2k}}{\whw(1-\frac1k)^{\frac1p}k^{2-\frac1p}}\right)=\infty.
	$$
This contradiction shows that (i) is satisfied.

Conversely assume that (iii) does not hold. We claim that then $T(D^p_{\om}, H^{\infty})$ contains all polynomials. As a matter of fact, if $g(z)=m_n(z)=z^n$ for some $n\in\N$, then the hypothesis $\omega\in\DD$ yields
\begin{equation*}
\begin{split}
\sup_{z\in\D}\|G^{D^2_{W}}_{g,z}\|_{\mathcal{Z}}
&\lesssim\sup_{0\leq r<1}\left((1-r)n\sum_{k=2}^\infty\frac{(k-1)r^{k-2}}{k(n+k)W_{2k-1}}\right)\\
&\asymp\sup_{0\leq r<1}(1-r)\sum_{k=1}^{\infty}\frac{r^{2(k-1)}}{\whw(1-\frac1k)^{\frac1p}k^{2-\frac1p}}<\infty.
\end{split}
\end{equation*}
Thus (i) and (iii) are equivalent, and the proof if complete.
\end{proof}

According to Theorem~\ref{D^pless1}, the boundedness of $I:D^p_{\om}\rightarrow D^1_0$ is equivalent to the statement that $T(D^p_{\om},H^{\infty})$ contains a non-constant function if $0<p<1$. However, such an equivalence is no longer valid for the case $1<p<\infty$. We will give a counterexample after Proposition~\ref{TgHLpom} in Section~\ref{Sec:p>1}, where the range $1<p<\infty$ is systematically studied.

Theorem~\ref{D^pless1} has the following interesting consequence.

\begin{corollary}
Let $0<p<1$, $\omega\in\DD$ and $g\in\H(\D)$, and let $X^{\om}_{p}\in\left\{\HL^{\om}_{p}, A^p_{\om}, D^p_{\widetilde{\om}_{[p]}}\right\}$. Then $T_g: X^\om_{p}\rightarrow H^\infty$ is bounded if and only if $g$ is a constant.
\end{corollary}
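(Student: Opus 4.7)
The strategy is to apply Theorem~\ref{D^pless1} with the weight $\widetilde{\om}_{[p]}$ playing the role of $\om$, and then to transfer the conclusion to $A^p_{\om}$ and $\HL^\om_p$ through the continuous embeddings $D^p_{\widetilde{\om}_{[p]}}\subset A^p_{\om}\subset \HL^\om_p$ supplied by Proposition~\ref{inclusion}(i) for $0<p\le 2$.

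The first step is to note, from the computation carried out in the proof of Proposition~\ref{inclusion}, that $\widetilde{\om}_{[p]}\in\DD$ whenever $\om\in\DD$, so Theorem~\ref{D^pless1} is indeed applicable with this weight in place of $\om$. Since $\whw$ is decreasing and $\widetilde{\om}_{[p]}(z)=\whw(z)(1-|z|)^{p-1}$, a one-line monotonicity argument gives the trivial upper bound $\widehat{\widetilde{\om}_{[p]}}(r)=\int_r^1\whw(s)(1-s)^{p-1}\,ds\le \whw(r)(1-r)^p/p$. Substituting this estimate into condition (ii) of Theorem~\ref{D^pless1} yields $\frac{(1-r)^{2-\frac1p}}{\widehat{\widetilde{\om}_{[p]}}(r)^{\frac1p}}\gtrsim \frac{(1-r)^{1-\frac1p}}{\whw(r)^{\frac1p}}$. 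Because $1-\frac1p<0$ for $0<p<1$, the numerator on the right blows up as $r\to 1^-$ while $\whw(r)^{1/p}$ stays bounded above by $\whw(0)^{1/p}$, so the whole quantity tends to $+\infty$. Hence the supremum in Theorem~\ref{D^pless1}(ii) is infinite, and the theorem forces $T(D^p_{\widetilde{\om}_{[p]}},H^\infty)$ to consist of constant functions only.

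Finally, Proposition~\ref{inclusion}(i) provides a constant $C=C(p,\om)$ with $\|f\|_{\HL^\om_p}\le C\|f\|_{A^p_\om}\le C^2\|f\|_{D^p_{\widetilde{\om}_{[p]}}}$ for every $f\in\H(\D)$. Consequently, if $T_g:X^\om_p\to H^\infty$ is bounded for any of the three choices of $X^\om_p$, composing with the relevant continuous inclusion shows that $T_g:D^p_{\widetilde{\om}_{[p]}}\to H^\infty$ is bounded as well, and the previous paragraph then forces $g$ to be constant. The converse implication is immediate since $T_g\equiv 0$ when $g$ is constant. No serious obstacle is expected; the whole argument amounts to a short reduction to Theorem~\ref{D^pless1}, the only new input being the trivial tail estimate for $\widetilde{\om}_{[p]}$ and the observation that the exponent $1-\frac1p$ is negative in the range under consideration.
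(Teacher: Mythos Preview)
Your proof is correct and follows essentially the same approach as the paper: apply Theorem~\ref{D^pless1} to the weight $\widetilde{\om}_{[p]}$ using the tail estimate $\widehat{\widetilde{\om}_{[p]}}(r)\lesssim\widehat{\om}(r)(1-r)^p$, and then invoke the inclusions of Proposition~\ref{inclusion}(i). The only cosmetic difference is that the paper records the full two-sided asymptotic $\widehat{\widetilde{\om}_{[p]}}(r)\asymp\widehat{\om}(r)(1-r)^p$ (valid since $\om\in\DD$), whereas you use only the upper bound, which is all that is actually needed here.
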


\begin{proof}
Since $\omega\in\DD$ by the hypothesis, we have
	$$
	\int_r^1\widetilde{\om}_{[p]}(t)\,dt\asymp\widehat{\om}(r)(1-r)^p,\quad 0\le r<1,
	$$
and hence
	$$
	\frac{(1-r)^{2-\frac1p}}{\left(\int_r^1\widetilde{\om}_{[p]}(t)\,dt\right)^{\frac1p}}\asymp
	\frac{(1-r)^{1-\frac1p}}{\whw(r)^{\frac1p}}\to\infty,\quad r\to1^-.
	$$
Therefore Theorem~\ref{D^pless1} shows that $T_g:D^p_{\widetilde{\om}_{[p]}}\rightarrow H^\infty$ is bounded if and only if $g$ is a constant. Since $D^p_{\widetilde{\om}_{[p]}}\subset A^p_{\om}\subset\HL^{\om}_{p}$ by Proposition~\ref{inclusion}(i), the assertion follows.
\end{proof}

We next prove the counterpart of Theorem~\ref{D^pless1} for compact operators, which covers the compactness part for the case $0<p<1$ of Theorem~\ref{intro:0<p<1}.

\begin{theorem}\label{compactDpless1}
Let $0<p<1$, $\om\in\DD$ and $g\in H^{\infty}$. Then the following statements are equivalent:
\begin{itemize}
\item[(i)] $T_c(D^p_{\om}, H^{\infty})$ consists of constant functions only;
\item[(ii)] $\displaystyle\limsup_{r\to1^-}\frac{(1-r)^{2-\frac1p}}{\whw(r)^{\frac1p}}>0$;
\item[(iii)]$\displaystyle\limsup_{r\to1^-}\left((1-r)\sum_{k=1}^{\infty}\frac{r^{2k}}{\whw(1-\frac1k)^{\frac1p}k^{2-\frac1p}}\right)>0$;
\item[(iv)] $I: D^p_{\om}\rightarrow D^1_0$ is not compact.
\end{itemize}
\end{theorem}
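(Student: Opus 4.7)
The plan is to establish (i)--(iv) by mirroring the scheme used in the proof of Theorem \ref{D^pless1}, adapted to the compactness setting. The equivalence (ii)$\Leftrightarrow$(iv) is precisely the contrapositive of the second set of equivalences in Lemma \ref{AuxLemma}, while (ii)$\Leftrightarrow$(iii) follows by taking $\limsup_{r\to 1^-}$ on both sides of the asymptotic identity \eqref{pililo} derived in the proof of Lemma \ref{AuxLemma} (and noting that the summands with $k=1$ contribute a bounded term, so the extra factor $r^{-2}$ in (iii) does not affect the $\limsup$). Hence the only substantive point is the equivalence (i)$\Leftrightarrow$(iii).

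The key input is the compactness analogue of the abstract criterion of \cite[Theorem~2.2]{CPPR}. Combined with the duality $(D^p_\om)^\star\simeq\ZZ$ via the $D^2_W$-pairing from Lemma \ref{lemma:dual-Dirichlet}, this should yield that $T_g:D^p_\om\to H^\infty$ is compact if and only if
$$
\lim_{|z|\to 1^-}\|G^{D^2_W}_{g,z}\|_{\mathcal{Z}}=0,
$$
where $G^{D^2_W}_{g,z}$ is the function defined exactly as in the proof of Theorem \ref{D^pless1}. The same careful inspection of \cite[Theorem~1.1]{CPPR} invoked there extends the criterion to the non-Banach range $0<p<1$.

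Assuming (iii), I suppose for contradiction that there is a non-constant $g\in T_c(D^p_\om, H^\infty)$, so that $\widehat{g}(N+1)\neq 0$ for some $N\ge 0$. The Fubini-plus-Hardy computation carried out in the proof of Theorem \ref{D^pless1}, now applied pointwise at $z$ with $|z|=r$ instead of under a $\sup_{z\in\D}$, together with the asymptotic $W_{2k-1}\asymp\whw(1-\tfrac1k)^{1/p}k^{1-1/p}$, gives
$$
\|G^{D^2_W}_{g,z}\|_{\mathcal{Z}}\gtrsim r^{N-1}(1-r)\sum_{k=2}^\infty\frac{r^{2k}}{\whw(1-\frac1k)^{\frac1p}k^{2-\frac1p}},
$$
and taking $\limsup_{r\to 1^-}$ contradicts the compactness criterion in view of (iii). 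Conversely, if (iii) fails, then the upper-bound computation from the proof of Theorem \ref{D^pless1} applied to the monomials $m_n(z)=z^n$ yields $\|G^{D^2_W}_{m_n,z}\|_{\mathcal{Z}}\to 0$ as $|z|\to 1^-$, so that every $m_n$ belongs to $T_c(D^p_\om, H^\infty)$ and (i) fails.

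The main obstacle is making sure the compact form of the abstract criterion really does survive in the non-Banach range $0<p<1$; this is however a direct translation of the adaptation already invoked in the boundedness case, so no genuinely new ideas are required beyond the careful bookkeeping of $\limsup$'s in place of $\sup$'s.
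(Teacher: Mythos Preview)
Your equivalences (ii)$\Leftrightarrow$(iii)$\Leftrightarrow$(iv) via Lemma~\ref{AuxLemma} are fine, but the core of your argument---the compactness criterion $\lim_{|z|\to1^-}\|G^{D^2_W}_{g,z}\|_{\mathcal{Z}}=0$---is incorrect, and it is not what the abstract theory in \cite{CPPR} delivers. The point-evaluation kernels $K_z^{D^2_W}$ do \emph{not} tend to zero weak-$*$ in $(H^\infty)^\star$ as $|z|\to1^-$ (test against $f\equiv1$), so compactness of $T_g^\ast$ does not force $G_{g,z}=T_g^\ast(K_z)$ to converge to $0$ in $\ZZ$. Concretely, $G^{D^2_W}_{g,z}(0)=\overline{g(z)-g(0)}$, which for the monomial $g=m_n$ equals $\overline{z}^n$ and has modulus tending to $1$, not $0$; thus your sufficiency step (``not~(iii) $\Rightarrow$ $m_n\in T_c$'') already fails. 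What compactness \emph{does} give, via relative compactness of $\{G_{g,z}:z\in\D\}$ in $\ZZ$, is a tail condition in the $w$-variable, uniform in $z$ (compare the proof of Proposition~\ref{nonnegativepless1}), or the integral version \eqref{prootu} used in the present proof. Your phrase ``applied pointwise at $z$'' confirms the limit is being taken in the wrong variable.

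The paper accordingly treats both implications quite differently from what you outline. For (ii)$\Rightarrow$(i) it invokes the tail criterion \eqref{prootu} (an adaptation of \cite[Theorem~2(iii)]{PRW}) and performs a considerably longer computation than the one in Theorem~\ref{D^pless1}: one inserts the weight $(1-|\varphi_a(w)|^2)^2$, expands $(G_{g,z})''(w)/(1-z\overline{w})^2$ as a power series in $\overline{w}$, restricts the $w$-integral to $\D\setminus D(0,R)$, and only then specializes $a=z$, $|z|=R\to1^-$ to recover the quantity in (ii). For (i)$\Rightarrow$(iv) the paper does not use the abstract criterion at all: it shows directly, via \cite[Hilfssatz~1]{Po61/62}, that $T_g:D^1_0\to H^\infty$ is bounded whenever $g'\in H^\infty$, so that compactness of $I:D^p_\om\hookrightarrow D^1_0$ makes $T_g=T_g\circ I$ compact on $D^p_\om$ for every such $g$.
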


\begin{proof}
We first observe that (ii)--(iv) are equivalent by Lemma~\ref{AuxLemma}. We next prove (ii)$\Rightarrow$(i) by showing that if $T_g:D^p_\om\to H^\infty$ is compact and $g$ is not a constant, then (ii) fails. To see this we first note that by following the proof of \cite[Theorem~2(iii)]{PRW} line by line, with minor modifications, gives
	\begin{equation}\label{prootu}
  \lim_{R\to1^-}\sup_{a,z\in\D}\int_{\D\setminus D(0,R)}|(G_{g,z}^{D^2_W})''(w)|^2(1-|\vp_a(w)|^2)^2\,dA(w)=0,
	\end{equation}
whenever $T_g:D^p_\om\to H^\infty$ is compact. If $g$ is not a constant, then there exists an $N\in\mathbb{N}\cup\{0\}$ such that $\widehat{g}(N+1)\neq0$. Therefore, by using the first part of the proof of Theorem~\ref{D^pless1}, we obtain
	\begin{equation}\label{prootu2}
	\begin{split}\allowdisplaybreaks
	\sup_{a,z\in\D}&\int_{\D\setminus D(0,R)}|(G_{g,z}^{D^2_W})''(w)|^2(1-|\vp_a(w)|^2)^2\,dA(w)\\
	&\ge\sup_{z\in\D}(1-|z|)^2\int_{\D\setminus D(0,R)}\left|\frac{(G^{D^2_{W}}_{g,z})''(w)}{(1-z\overline{w})^2}\right|^2(1-|w|)^2 dA(w)\\
	&=\sup_{z\in\D}\left((1-|z|)^2\int_{\D\setminus D(0,R)}\left|\left(\sum_{k=2}^{\infty}\frac{k-1}{2kW_{2k-1}}\left(\sum_{n=0}^\infty
	\frac{\widehat{g}(n+1)(n+1)z^{n+k+1}}{n+k+1}\right)\overline{w}^{k-2}\right)\right.\right.\\
	&\qquad\cdot\left.\left.\left(\sum_{j=0}^\infty(j+1)z^j\overline{w}^j\right)\right|^2(1-|w|)^2\,dA(w)\right)\\
	&=\sup_{z\in\D}\left((1-|z|)^2\int_{\D\setminus D(0,R)}(1-|w|)^2\right.\\
	&\qquad\cdot\left.\left|\sum_{m=0}^{\infty}\left(\sum_{k=0}^{m}\frac{(k+1)(m-k+1)}{2(k+2)W_{2k+3}}
	\left(\sum_{n=0}^\infty\frac{\widehat{g}(n+1)(n+1)z^{n+k+3}}{n+k+3}\right)z^{m-k}\right)\overline{w}^m\right|^2\,dA(w)\right)\\
	&\asymp\sup_{z\in\D}\left((1-|z|)^2\sum_{m=0}^\infty\left|\sum_{k=0}^{m}\frac{(k+1)(m-k+1)}{2(k+2)W_{2k+3}}
	\left(\sum_{n=0}^\infty\frac{\widehat{g}(n+1)(n+1)z^{n+m+3}}{n+k+3}\right)\right|^2\right.\\
	&\qquad\cdot\left.\left(\int_R^1s^{2m+1}(1-s)^2\,ds\right)\right)\\
	&\gtrsim\sup_{0\leq r<1}\left((1-r)^2\sum_{m=0}^\infty \left(r^{2m+6}\int_R^1s^{2m+1}(1-s)^2\,ds\right)\right.\\
	&\qquad\cdot\left.\int_0^{2\pi}\left|\sum_{n=0}^{\infty}\widehat{g}(n+1)(n+1)
	\left(\sum_{k=0}^m\frac{(k+1)(m-k+1)}{2(k+2)(n+k+3)W_{2k+3}}\right)r^ne^{i\theta}\right|^2\,d\theta\right)\\
	&\ge|\widehat{g}(N+1)|^2(N+1)^2\sup_{0\leq r<1}(1-r)^2 r^{2N+5}\sum_{m=0}^\infty\left(\int_R^1(rs)^{2m+1}(1-s)^2\,ds\right)
	\left(\sum_{k=0}^{m}\frac{m-k+1}{(k+3)(W_{2k+3})}\right)^2\\
	&\gtrsim (1-R)^2 R^{2N+5}\int_R^1(1-s)^2\left(\sum_{m=0}^\infty(Rs)^{2m+1}
	\left(\sum_{k=0}^{m}\frac{m-k+1}{(k+3)(W_{2k+3})}\right)^2\right)\,ds.
	\end{split}
	\end{equation}
Now, a change of variables and Lemma~\ref{Eqdhat}(iii) applied to $W\in\DDD$ yield
	\begin{equation*}
	\begin{split}
	\sum_{m=0}^\infty(Rs)^{2m+1}\left(\sum_{k=0}^{m}\frac{m-k+1}{(k+3)(W_{2k+3})}\right)^2\,ds
	&\asymp\int_1^\infty (Rs)^{2x}\left(\int_1^x\frac{x-y}{yW_y}\,dy\right)^2\,dx\\
	&=\int_0^1\frac{(Rs)^{\frac{2}{1-r}}}{(1-r)^4}\left(\int_0^r\frac{r-t}{(1-t)^2\widehat{W}(t)}\,dt\right)^2\,dr.
	\end{split}
	\end{equation*}
By Lemma~\ref{Eqdhat}(ii), there exists a $\beta=\beta(W)>0$ such that $\frac{\widehat{W}(t)}{(1-t)^\beta}\lesssim\frac{\widehat{W}(r)}{(1-r)^\beta}$ whenever $0\leq t\leq r<1$. This together with the similar estimates as \eqref{theq}--\eqref{theq2} yields
	\begin{equation}\label{prootu3}
	\begin{split}
	&\sum_{m=0}^\infty(Rs)^{2m+1}\left(\sum_{k=0}^{m}\frac{m-k+1}{(k+3)(W_{2k+3})}\right)^2\,ds\\
	&\quad\gtrsim\int_0^1\frac{(Rs)^{\frac{2}{1-r}}}{(1-r)^{4-2\beta}\widehat{W}(r)^2}\left(\int_0^r\frac{r-t}{(1-t)^{2+\beta}}\,dt\right)^2\,dr\\
	&\quad\asymp \int_0^1\frac{(Rs)^{\frac{2}{1-r}}}{(1-r)^{4}\widehat{W}(r)^2}\,dr
	\asymp\frac{1}{(1-Rs)^3\widehat{W}(Rs)^2},\quad \frac12\le R,s<1.
	\end{split}
	\end{equation}
Therefore, by combining \eqref{prootu}--\eqref{prootu3}, and then applying Lemma~\ref{Eqdhat}(ii) and the identity $\widehat{W}(z)=\widehat{\om}(z)^{\frac1p}(1-|z|)^{\frac1p-1}$ we finally obtain
	\begin{align*}
	0&=\lim_{R\rightarrow1^-}\sup_{a,z\in\D}\int_{\D\setminus D(0,R)}|(G_{g,z}^{D^2_W})''(w)|^2(1-|\vp_a(z)|^2)^2\,dA(w)\\
	&\gtrsim \lim_{R\rightarrow1^-}R^{2N+5}(1-R)^2\int_R^1\frac{(1-s)^2}{(1-Rs)^3\widehat{W}(Rs)^2}\,ds\\
	&\asymp\lim_{R\rightarrow1^-}\frac{(1-R)^2}{\widehat{W}(R)^2}
	=\lim_{R\rightarrow1^-}\left(\frac{(1-R)^{2-\frac1p}}{\whw(R)^{\frac1p}}\right)^2,
	\end{align*}
which contradicts (ii). Thus (ii) implies (i).

We complete the proof by showing that (i) implies (iv). This implication is established by proving that $T_g:D^p_\om\to H^\infty$ is compact, whenever $I:D^p_\om\to D^1_0$ is compact and $g'\in H^\infty$. Observe that,
	\begin{equation}\label{Eq:Pommerenke-lemma}
	\int_0^rM_\infty(t,f)\,dt\le\pi rM_1(r,f),\quad f\in\H(\D),\quad 0<r<1,
	\end{equation}
by \cite[Hilfssatz~1]{Po61/62}. Further,
    \begin{equation*}
    \begin{split}
    |f(re^{i\t})|
    =\left|\int_0^{re^{i\t}}f'(\z)\,d\z+f(0)\right|
    \le2\left(\int_0^r|f'(te^{i\t})|\,dt+|f(0)|\right),\quad f\in\H(\D),
    \end{split}
    \end{equation*}
and hence
	\begin{equation}\label{pili}
	M_\infty(r,f)\lesssim\int_0^r M_\infty(t,f')\,dt+|f(0)|,\quad f\in\H(\D).
	\end{equation}
By combining \eqref{Eq:Pommerenke-lemma} and \eqref{pili} we deduce
	\begin{equation*}
  \begin{split}
	\|T_g(f)\|_{H^\infty}
	&\le\|g'\|_{H^\infty}\int_0^1M_\infty(r,f)\,dr
	\lesssim\|g'\|_{H^\infty}\left(\int_0^1\left(\int_0^r M_\infty(t,f')\,dt\right)dr+|f(0)|\right)\\
	&\lesssim\|g'\|_{H^\infty}\|f\|_{D^1_0},
	\end{split}
  \end{equation*}
and thus $T_g:D^1_0\to H^\infty$ is bounded and $\|T_g\|_{D^1_0\to H^\infty}\lesssim\|g'\|_{H^\infty}$. Since $I:D^p_\om\to D^1_0$ was assumed to be compact, $T_c(D^p_\om,H^\infty)$ contains each $g\in\H(\D)$ such that $g'\in H^\infty$.
\end{proof}

In the case when the Maclaurin coefficients of the symbol are non-negative, we have the following result which establishes the statement in Theorem~\ref{nonnegativepless1intro}(i).

\begin{proposition}\label{nonnegativepless1}
Let $0<p<1$ and $\omega\in\DD$, and let $g\in H^\infty$ such that $\widehat{g}(n)\geq0$ for all $n\in\mathbb{N}\cup\{0\}$. Then $T_g:D^p_{\om}\to H^\infty$ is bounded if and only if
	$$
	\sup_{0\leq r<1}\left((1-r)\sum_{k=0}^{\infty}\frac{(k+1)^{\frac1p-1}r^k}{(\om_k)^{\frac1p}}\left(\sum_{n=0}^{\infty}
	\frac{\widehat{g}(n+1)(n+1)}{n+k+1}\right)\right)<\infty,
	$$
and
	$$
	\|T_g\|_{D^p_{\om}\rightarrow H^\infty}
	\asymp\sup_{0\leq r<1}\left((1-r)\sum_{k=0}^{\infty}\frac{(k+1)^{\frac1p-1}r^k}{(\om_k)^{\frac1p}}\left(\sum_{n=0}^{\infty}
	\frac{\widehat{g}(n+1)(n+1)}{n+k+1}\right)\right).
	$$
Moreover, $T_g:D^p_{\om}\to H^\infty$ is compact if and only if
	\begin{equation}\label{cptcondition}
	\limsup_{r\rightarrow1^-}\left((1-r)\sum_{k=0}^{\infty}\frac{(k+1)^{\frac1p-1}r^k}{(\om_k)^{\frac1p}}\left(\sum_{n=0}^{\infty}
	\frac{\widehat{g}(n+1)(n+1)}{n+k+1}\right)\right)=0.
	\end{equation}
\end{proposition}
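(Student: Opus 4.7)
The plan is to combine the duality $(D^p_{\om})^\star\simeq\ZZ$ from Lemma~\ref{lemma:dual-Dirichlet} with the reproducing-kernel criterion already used in the proof of Theorem~\ref{D^pless1}: namely, $T_g:D^p_{\om}\to H^\infty$ is bounded if and only if $\sup_{z\in\D}\|G^{D^2_W}_{g,z}\|_{\ZZ}<\infty$, where $\overline{G^{D^2_W}_{g,z}(w)}=\int_0^z g'(\zeta)\overline{K^{D^2_W}_\zeta(w)}\,d\zeta$. A term-by-term computation using $K^{D^2_W}_\zeta(w)=1+\sum_{k\ge 1}\bar\zeta^k w^k/(2k^2 W_{2k-1})$ gives
$$
(G^{D^2_W}_{g,z})''(w)=\sum_{k=2}^\infty\frac{k-1}{2kW_{2k-1}}\left(\sum_{n=0}^\infty\frac{(n+1)\overline{\widehat{g}(n+1)}\,\bar z^{n+k+1}}{n+k+1}\right)w^{k-2},
$$
and, since $W\in\DDD$ with $\widehat W(r)=\whw(r)^{1/p}(1-r)^{1/p-1}$, Lemma~\ref{Eqdhat}(iii) produces the key weight identification $W_{2k+1}\asymp\om_k^{1/p}(k+1)^{1-1/p}$.

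For the upper bound, the triangle inequality together with $\widehat g(n+1)\ge 0$ and $|z|\le 1$ yields
$$
(1-|w|^2)|(G^{D^2_W}_{g,z})''(w)|\le(1-|w|^2)\sum_{k=2}^\infty\frac{(k-1)|w|^{k-2}}{2kW_{2k-1}}\sum_{n\ge 0}\frac{(n+1)\widehat g(n+1)}{n+k+1}.
$$
Setting $s=|w|$, reindexing $k\mapsto k+2$, and invoking the weight identification show that the right-hand side is at most a constant multiple of $A_{p,\om}$. For the reverse direction, we test on the positive real axis: taking $z=\rho\in[0,1)$ and $w=s\in[0,1)$ makes every term of $(G^{D^2_W}_{g,\rho})''(s)$ non-negative, so
$$
\|G^{D^2_W}_{g,\rho}\|_{\ZZ}\ge(1-s^2)(G^{D^2_W}_{g,\rho})''(s)=(1-s^2)\sum_{k=2}^\infty\frac{(k-1)s^{k-2}\rho^{k+1}}{2kW_{2k-1}}\sum_{n\ge 0}\frac{(n+1)\widehat g(n+1)\rho^n}{n+k+1}.
$$
Since $\widehat g(n)\ge 0$ and $g\in H^\infty$ force $\sum_n\widehat g(n)\le\|g\|_{H^\infty}<\infty$, monotone convergence as $\rho\to 1^-$ followed by the supremum in $s$, together with the same reindexing, deliver the reverse estimate $A_{p,\om}\lesssim\sup_{z\in\D}\|G^{D^2_W}_{g,z}\|_{\ZZ}$. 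Combining these two inequalities establishes the characterization and the norm equivalence.

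For the compactness part, we follow Theorem~\ref{compactDpless1} and use the corresponding CPPR-type criterion, which asserts that $T_g$ is compact precisely when
$$
\lim_{R\to 1^-}\sup_{a,z\in\D}\int_{\D\setminus D(0,R)}|(G^{D^2_W}_{g,z})''(w)|^2(1-|\vp_a(w)|^2)^2\,dA(w)=0.
$$
Taking $a=0$ and $z=\rho\in[0,1)$ and exploiting the non-negativity of $\widehat g$, Parseval's identity in the angular variable converts the integrand into a diagonal series in the Taylor coefficients of $(G^{D^2_W}_{g,\rho})''$; standard asymptotics for $\int_R^1 r^{2m+1}(1-r^2)^2\,dr$, monotone convergence as $\rho\to 1^-$, and the weight identification reduce the vanishing condition to $\limsup_{r\to 1^-}F(r)=0$, where $F(r)$ is the expression in \eqref{cptcondition}. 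The principal technical obstacle lies precisely here: the careful truncation in $k$ and $n$ needed to transfer between the integral criterion and the explicit series in both directions, exploiting $\om\in\DD$ (hence $W\in\DDD$) through Lemma~\ref{Eqdhat} to match moments uniformly in $\rho$.
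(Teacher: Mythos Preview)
Your boundedness argument is correct and essentially the paper's: both compute $(G^{D^2_W}_{g,z})''$ explicitly, test on the positive real axis (exploiting $\widehat g(n)\ge 0$) for the lower bound, use the triangle inequality for the upper bound, and invoke the moment identification $W_{2k+1}\asymp(\om_k)^{1/p}(k+1)^{1-1/p}$.

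Your compactness argument, however, has a genuine gap. The integral condition you cite from the proof of Theorem~\ref{compactDpless1},
$$
\lim_{R\to 1^-}\sup_{a,z\in\D}\int_{\D\setminus D(0,R)}|(G^{D^2_W}_{g,z})''(w)|^2(1-|\vp_a(w)|^2)^2\,dA(w)=0,
$$
is established there only as a \emph{consequence} of compactness (see \eqref{prootu}), not as a characterization. Even if one granted it as an equivalence, your reduction to \eqref{cptcondition} proceeds by specializing to $a=0$, $z=\rho$; that is a lower bound on the supremum, so it yields at most ``compact $\Rightarrow$ \eqref{cptcondition}'' and says nothing about the converse. You note that going ``in both directions'' is the obstacle, but do not carry it out, and the Carleson-type supremum over $a\in\D$ is not obviously dominated by the pointwise $\ZZ$-type quantity in \eqref{cptcondition}.

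The paper argues differently and more directly. For necessity, it observes that $T_g^*(K_z^{D^2_W})=G^{D^2_W}_{g,z}$ with $\|K_z^{D^2_W}\|_{(H^\infty)^\star}\le 1$, so compactness makes $\{G^{D^2_W}_{g,z}:z\in\D\}$ relatively compact in $\ZZ$; a finite $\varepsilon$-net together with $(G^{D^2_W}_{g,z_j})''(w)(1-|w|)\to 0$ for each $z_j$ yields
$$
\lim_{|w|\to 1^-}\sup_{z\in\D}(1-|w|)\,|(G^{D^2_W}_{g,z})''(w)|=0,
$$
which, under $\widehat g(n)\ge 0$, is exactly \eqref{cptcondition}. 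For sufficiency, the paper bypasses the integral criterion entirely: given norm-bounded $f_n\to 0$ locally uniformly, it writes $\|T_g(f_n)\|_{H^\infty}=\sup_z|\langle f_n,G^{D^2_W}_{g,z}\rangle_{D^2_W}|$ and, via the estimate from the proof of Lemma~\ref{lemma:dual-Dirichlet}, dominates this by
$$
\sup_{z\in\D}\int_\D|f_n''(\zeta)|\,|(G^{D^2_W}_{g,z})''(\zeta)|\,\frac{(1-|\zeta|)}{|\zeta|}\widehat W(\zeta)\,dA(\zeta)
$$
plus lower-order terms. Splitting at radius $R$, using \eqref{cptcondition} on $\D\setminus D(0,R)$ and uniform convergence on $\overline{D(0,R)}$ (together with $\|f_n'\|_{D^1_{\widehat W}}\lesssim\|f_n\|_{D^p_\om}$), completes the proof.
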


\begin{proof} The first part of the proof of Theorem~\ref{D^pless1}, with minor modifications, show that
	$$
	\|T_g\|_{D^p_{\om}\rightarrow H^\infty}
	\asymp\sup_{z\in\D}\|G_{g,z}^{D^2_W}\|_{\ZZ}
	\asymp\sup_{0\le r<1}(1-r)\sum_{k=0}^{\infty}\frac{r^k}{W_{2k+1}}\left(\sum_{n=0}^{\infty}
	\frac{\widehat{g}(n+1)(n+1)}{n+k+1}\right).
	$$
Since $\om,W\in\DD$, Lemma~\ref{Eqdhat}(ii)(iii) yields
	$$
	W_{2k-1}
	\asymp W_{k+1}
	\asymp\widehat{W}\left(1-\frac{1}{k+1}\right)
	\asymp\widehat{\om}\left(1-\frac{1}{k+1}\right)^\frac1p(k+1)^{1-\frac1p}
	\asymp(\omega_k)^\frac1p(k+1)^{1-\frac1p},\quad k\in\N.
	$$
The statements concerning the boundedness are now proved.

To verify the assertion on the compactness, assume first that $T_g:D^p_{\om}\rightarrow H^\infty$ is compact. Let $(H^\infty)^\star$ be the identification of the dual of $H^\infty$ via the $D^2_W$-pairing. Then $T_g^*:(H^\infty)^\star\rightarrow(D^p_\om)^\star$ is compact. Moreover, by \cite[(2.4)]{CPPR},
	$$
	\langle T_g(f),K_z^{D^2_W}\rangle_{D^2_W}
	=T_g(f)(z)
	=\langle f,G^{D^2_W}_{g,z}\rangle_{D^2_W},\quad z\in\D.
	$$
and hence $T_g^*(K_z^{D^2_W})=G^{D^2_W}_{g,z}$ and $\|K_z^{D^2_W}\|_{(H^\infty)^\star}\le1$ for all $z\in\D$. Therefore $\{G^{D^2_W}_{g,z}:z\in\D\}$ is relatively compact in $\mathcal{Z}$ by Lemma \ref{lemma:dual-Dirichlet}. Hence, for a given $\varepsilon>0$, there exist $z_1,z_2,\ldots,z_N\in\D$ such that for each $z\in\D$, we have $\|G^{D^2_W}_{g,z}-G^{D^2_W}_{g,z_j}\|_{\mathcal{Z}}<\varepsilon$ for some $j=j(z)\in\{1,\ldots,N\}$. This together with the fact that $\lim_{|w|\to 1^-}|(G^{D^2_W}_{g,z_j})^{\prime\prime}(w)|(1-|w|)=0$ for each $j\in\{1,\dots N\}$, implies that
	\begin{align*}
	\lim_{|w|\rightarrow1^-}\sup_{z\in\D}|(G^{D^2_W}_{g,z})^{\prime\prime}(w)|(1-|w|)=0.
	\end{align*}
Consequently, by the hypothesis $\widehat{g}(n)\ge0$ for all $n\in\mathbb{N}\cup\{0\}$ and the first part of the proof, we have
	\begin{equation*}
	\begin{split}
	0&=\limsup_{r\rightarrow1^-}\sup_{s\in (0,1)}|(G^{D^2_W}_{g,s})^{\prime\prime}(r)|(1-r)\\
	&\asymp\limsup_{r\rightarrow1^-}\left((1-r)\sum_{k=0}^{\infty}\frac{k^{\frac1p-1}r^k}{(\om_k)^{\frac1p}}\left(\sum_{n=0}^{\infty}
	\frac{\widehat{g}(n+1)(n+1)}{n+k+1}\right)\right).
	\end{split}
	\end{equation*}

Conversely, assume that \eqref{cptcondition} holds. To prove the compactness of $T_g: D^p_\om\rightarrow H^\infty$, it suffices to show that each norm bounded family $\{f_n\}$ in $D^p_\om$ such that $f_n\rightarrow0$ uniformly on compact subsets of $\D$ satisfies $\|T_g(f_n)\|_{H^\infty}\rightarrow0$, as $n\to\infty$. Let now $\{f_n\}$ be such a family. An argument similar to that employed in the proof of Lemma~\ref{lemma:dual-Dirichlet} yields
	\begin{equation}\label{jutskoi}
	\begin{split}
	\|T_g(f_n)\|_{H^\infty}
	&=\sup_{z\in\D}|\langle f_n, G^{D^2_W}_{g,z}\rangle_{D^2_W}|\\
	&=\sup_{z\in\D}\left(\left|\int_\D f'_n(\zeta)(G^{D^2_W}_{g,z})'(\zeta)W(\zeta)\,dA(\zeta)\right|+|G^{D^2_W}_{g,z}(0)||f_n(0)|\right)\\
	&\lesssim\sup_{z\in\D}\left(\int_\D|f''_n(\zeta)||(G^{D^2_W}_{g,z})''(\zeta)|\frac{(1-|\zeta|)}{|\zeta|}\widehat{W}(\zeta)\,dA(\zeta)\right.\\
	&\quad\left.+|(G^{D^2_W}_{g,z})'(0)||(f_n)'(0)|+|G^{D^2_W}_{g,z}(0)||f_n(0)|\right).
	\end{split}
	\end{equation}
By the uniform convergence we may choose $N=N(\varepsilon,R)\in\mathbb{N}$ such that $\max\{|f_n(0)|,|f'_n(0)|,|f^{''}_n(\xi)|\}<\varepsilon$ for all $n\ge N$ and $\xi\in\overline{D(0,R)}$. Therefore the first part of the proof concerning the boundedness, and\eqref{cptcondition} imply
	\begin{equation}\label{Eq:cpt1}
	\sup_{z\in\D}(|(G^{D^2_W}_{g,z})'(0)||(f_n)'(0)|
	+|G^{D^2_W}_{g,z}(0)||f_n(0)|)
	\lesssim\varepsilon\sup_{z\in\D}\|G^{D^2_W}_{g,z}\|_{\mathcal{Z}}
	\lesssim\varepsilon,\quad n\ge N.
	\end{equation}	
	
Further, by \eqref{cptcondition}, for each $\varepsilon>0$ there exists an $R=R(\varepsilon)\in(0,1)$ such that $|(G^{D^2_W}_{g,z})^{\prime\prime}(z)|(1-|z|)<\varepsilon$ for $|z|>R$. Since $\|f'_n\|_{D^1_{\widehat{W}}}\lesssim\|f_n\|_{D^p_\om}$ by the proof of Lemma~\ref{lemma:dual-Dirichlet}, we deduce
	\begin{equation}\label{Eq:cpt2}
	\begin{split}
	&\int_\D|f''_n(\zeta)|\left|(G^{D^2_W}_{g,z})''(\zeta)\right|\frac{(1-|\zeta|)}{|\zeta|}\widehat{W}(\zeta)\,dA(\zeta)\\
	&\quad=\left(\int_{D(0,R)}
	\qquad+\int_{\D\setminus D(0,R)}\right)|f''_n(\zeta)|\left|(G^{D^2_W}_{g,z})''(\zeta)\right|\frac{(1-|\zeta|)}{|\zeta|}\widehat{W}(\zeta)\,dA(\zeta)\\	
	&\quad\lesssim\varepsilon\left(\sup_{z\in\D}\left\|G^{D^2_W}_{g,z}\right\|_{\mathcal{Z}}\right)
	+\varepsilon\int_{\D}|f_n''(\zeta)|\widehat{W}(\zeta)\,dA(\zeta)\\
	&\quad\lesssim\varepsilon(1+\|f_n\|_{D^p_\om})\lesssim\varepsilon,\quad n\ge N.
	\end{split}
	\end{equation}
By combining \eqref{jutskoi}--\eqref{Eq:cpt2} we deduce $\lim_{n\rightarrow\infty}\|T_gf_n\|_{H^\infty}=0$.
\end{proof}

Theorem~\ref{D^pless1} shows that $\frac12<p<1$ is a necessary condition for $T(D^p_\om,H^\infty)$ to be nontrivial (provided $0<p<1$). In this case there exist weights $\om\in\DDD$ and symbols $g$ such that $T_g:D^p_\om\rightarrow H^\infty$ is bounded but not compact. As a matter of fact the standard weight $\om(z)=(1-|z|)^{2p-2}$ satisfies $\limsup_{r\rightarrow1^-}\frac{(1-r)^{2-\frac1p}}{\whw(r)^{\frac1p}}=\frac{1}{\sqrt[p]{2p-1}}>0$, and thus $T_c(D^p_\om, H^\infty)$ consists of constant functions only by Theorem~\ref{compactDpless1}. However, it is easy to see that in this case $T(D^p_\om, H^\infty)$ contains all polynomials.

\section{Case $p=1$}
We begin with the following result which covers the boundedness part for the case $p=1$ of Theorem~\ref{intro:0<p<1}.

\begin{theorem}\label{th:p=1}
Let $\omega\in\DD$ and $g\in\H(\D)$. Then the following statements are equivalent:
\begin{itemize}
\item[(i)] $T(D^1_\om,H^\infty)$ consists of constant functions only;
\item[(ii)] $\sup_{0\le r<1} \frac{(1-r)}{\whw(r)}=\infty$;
\item[(iii)] $\sup_{0\le r<1}\left((1-r)\sum_{k=1}^\infty\frac{r^k}{(k+1)\om_k}\right)=\infty$;
\item[(iv)] $I:D^1_\om\to D^1_0$ is unbounded.
\end{itemize}
\end{theorem}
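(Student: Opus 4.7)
I would prove Theorem~\ref{th:p=1} in a loop, relying on the dualities and embedding results already in hand. The equivalence (ii) $\Leftrightarrow$ (iii) is a direct computation: by Lemma~\ref{Eqdhat}(iii), $\om_k\asymp\whw(1-1/k)$, so repeating the substitution $x=1/(1-t)$ and the doubling-type splitting of Lemma~\ref{AuxLemma} gives
$$\sum_{k=1}^\infty\frac{r^k}{(k+1)\om_k}\asymp\frac{1}{\whw(r)}\quad\text{as }r\to 1^-,$$
and multiplying by $1-r$ shows the suprema in (ii) and (iii) are simultaneously finite or infinite. The equivalence (ii) $\Leftrightarrow$ (iv) is immediate from the Carleson embedding theorem: since $\|f\|_{D^1_\om}=\|f'\|_{A^1_\om}+|f(0)|$, the boundedness of $I:D^1_\om\to D^1_0$ is equivalent to $A^1_\om\hookrightarrow L^1(dA)$, which by \cite[Theorem~1(b)]{PR2015} amounts to $|S(a)|\lesssim\om(S(a))$, i.e.\ $(1-|a|)\lesssim\whw(a)$ for all $a\in\D$, the negation of (ii).

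For the principal equivalence (i) $\Leftrightarrow$ (ii), the direction (i) $\Rightarrow$ (ii) is obtained contrapositively: if (ii) fails then so does (iv), so $I:D^1_\om\to D^1_0$ is bounded, and combining with the estimate $\|T_g\|_{D^1_0\to H^\infty}\lesssim\|g'\|_{H^\infty}$ established at the end of the proof of Theorem~\ref{compactDpless1} via Pommerenke's lemma \eqref{Eq:Pommerenke-lemma} gives $T_g:D^1_\om\to H^\infty$ bounded for every $g$ with $g'\in H^\infty$, including non-constants such as $g(z)=z$, so (i) fails. For the reverse (ii) $\Rightarrow$ (i), I would apply the abstract criterion \cite[Theorem~2.2]{CPPR} together with Lemma~\ref{le:duald1omhat}, yielding that $T_g$ is bounded if and only if $\sup_z\|G_{g,z}\|_{\BMOA'(\infty,\om)}<\infty$, where matching Taylor coefficients in the $D^2_{\om\whw}$-pairing yields
$$G_{g,z}(w)=\overline{g(z)-g(0)}+\sum_{n\ge 1}\frac{\overline{C_n(z)}\,w^n}{2n^2(\om\whw)_{2n-1}},\qquad C_n(z)=\int_0^z\zeta^n g'(\zeta)\,d\zeta.$$
Supposing $g$ non-constant with $T_g$ bounded and letting $N\ge 0$ be the smallest index with $\widehat g(N+1)\ne 0$, I would estimate $\|(G_{g,z})'_r\|_{\BMOA}$ from below via the Carleson-measure description $\|h\|_{\BMOA}^2\asymp |h(0)|^2+\sup_{b\in\D}\int_\D|h'(w)|^2(1-|\varphi_b(w)|^2)\,dA(w)$, taken at the aperture $b=zr$. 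Using the identities $(\om\whw)_{2n-1}\asymp\om_n^2\asymp\whw(1-1/n)^2$ (Lemma~\ref{le:duald1omhat}), the dominant asymptotic $C_n(z)\asymp\widehat g(N+1)\,z^{n+N+1}/n$, and the Forelli--Rudin estimate $\int_\D(1-|w|^2)/|1-\overline{z}rw|^{s}\,dA(w)\asymp(1-|z|r)^{-(s-3)}$, I would reach
$$\whw(r)\,\|(G_{g,z})'_r\|_{\BMOA}\gtrsim|\widehat g(N+1)|\,r|z|^{N+3}\cdot\frac{1-r|z|}{\whw(r|z|)};$$
letting $|z|\to 1^-$ and using $\om\in\DD$ to justify $\whw(r|z|)\asymp\whw(r)$, boundedness of $T_g$ then forces $\sup_r(1-r)/\whw(r)<\infty$, contradicting (ii).

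The hard part will be this last BMOA lower bound. The naive estimate $\|h\|_{\BMOA}\gtrsim\|h\|_{H^2}$ is strictly too weak, as the model case $\whw(r)\asymp(1-r)^{3/2}$ with $g(z)=z$ already demonstrates; one must pick the right aperture $b=zr$ in the Carleson characterization in order to capture the full strength of the dual norm, and extract the dominant contribution of the moment series $\sum_m(\overline z\,\zeta)^m/((m+1)\om_m^2)$ uniformly in the symbol $g$ using only $\om\in\DD$ (not a specific power profile). Once this is in place, treatment of an arbitrary non-constant $g$ reduces, via the explicit form of $C_n(z)$ above, to isolating the $(N+1)$-st Taylor coefficient of $g$, and the argument closes as in the analogous step of the proof of Theorem~\ref{D^pless1}.
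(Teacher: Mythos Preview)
Your overall architecture---(ii)$\Leftrightarrow$(iii)$\Leftrightarrow$(iv) via Lemma~\ref{AuxLemma} and \cite{PR2015}, (i)$\Rightarrow$(ii) by contraposition through Pommerenke's lemma, and (ii)$\Rightarrow$(i) via \cite[Theorem~2.2]{CPPR} combined with Lemma~\ref{le:duald1omhat}---is exactly the route the paper takes.

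The gap is in your execution of the BMOA lower bound in (ii)$\Rightarrow$(i). Two of the three ingredients you list for the estimate do not deliver what you need. First, the ``dominant asymptotic'' $C_n(z)\asymp\widehat g(N+1)\,z^{n+N+1}/n$ is not a legitimate lower bound: $C_n(z)=\sum_{k\ge0}(k+1)\widehat g(k+1)z^{n+k+1}/(n+k+1)$ is a full series in which the other terms may cancel the leading one. Second, the Forelli--Rudin integral $\int_\D(1-|w|^2)|1-\overline{zr}w|^{-s}\,dA(w)$ is not what arises here, because $(G_{g,z})''(rw)$ is not a power of a Cauchy kernel; its coefficients carry the weighted moments $(\om\whw)_{2n-1}^{-1}\asymp\whw(1-1/n)^{-2}$, and no single exponent $s$ captures that.

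The paper circumvents both issues at once. It chooses aperture $a=z$, multiplies the explicit Taylor expansion of $(G^{D^2_{\om\whw}}_{g,z})''(Rw)$ by $\sum_j z^j\overline w^j$, and computes the resulting $L^2((1-|w|)\,dA)$ norm by Parseval; this produces a double sum in $m,k$ with an inner $n$-sum over $\widehat g(n+1)$. The isolation of the single coefficient $\widehat g(N+1)$ is then accomplished \emph{rigorously} by writing $z=te^{i\theta}$ and averaging in $\theta$ (the step you allude to from Theorem~\ref{D^pless1}). Only after that does one face the weighted-kernel estimate
\[
\sum_{m\ge0}\frac{t^{2m}}{(m+1)^2}\Bigl(\sum_{k=0}^m\frac{R^k}{(k+1)(\om\whw)_{2k+3}}\Bigr)^2\;\gtrsim\;\frac{1-Rt}{\whw(Rt)^4},
\]
which is \emph{not} Forelli--Rudin but an application of Lemma~\ref{Eqdhat}(ii) after the substitution $x=1/(1-r)$, exactly parallel to \eqref{theq}--\eqref{theq2}. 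Your final sentence points in the right direction, but be aware that the Forelli--Rudin invocation is a red herring: the weighted sum above is the real analytic core, and it requires the doubling hypothesis explicitly, not a standard kernel estimate.
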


\begin{proof}
First observe that (ii)--(iv) are equivalent by Lemma~\ref{AuxLemma}. We show next that (ii) implies (i). Assume (ii), and suppose on the contrary to (i) that there exists a $g\in T(D^1_\om,H^\infty)$ such that $\widehat{g}(N+1)\ne0$ for some $N\in\N\cup\{0\}$. Then \cite[Theorem~1.1]{CPPR} and Lemma~\ref{le:duald1omhat} show that $g\in T(D^1_\om,H^\infty)$ if and only if $\sup_{z\in\D}\|G^{D^2_{\om\whw}}_{g,z}\|_{\BMOA'(\infty,\om)}<\infty$,  where
$\overline{G^{D^2_{\om\whw}}_{g,z}(w)}=\int_0^zg'(\zeta)\overline{K^{D^2_{\om\whw}}_{\zeta}(w)}\,d\zeta$ and
$K^{D^2_{\om\whw}}_{\zeta}(w)=1+\sum_{k=1}^{\infty}\frac{\overline{\zeta}^kw^k}{2k^2(\om\whw)_{2k-1}}$ is the reproducing kernel of $D^2_{\om\whw}$ associated with the point $\zeta\in\D$.  A simple computation shows that
	$$
	\overline{G^{D^2_{\om\whw}}_{g,z}(w)}
	=\sum_{n=0}^\infty\widehat{g}(n+1)z^{n+1}+
	\sum_{k=1}^{\infty}\frac{1}{2k^2(\om\whw)_{2k-1}}\left(\sum_{n=0}^\infty\frac{\widehat{g}(n+1)(n+1)z^{n+k+1}}{n+k+1}\right)\overline{w}^k.
	$$
Therefore
	\begin{equation*}
	\begin{split}
	\infty&>\sup_{z\in\D}\|G^{D^2_{\om\whw}}_{g,z}\|^2_{\BMOA'(\infty,\om)}
	\ge\sup_{z\in\D}\|(G^{D^2_{\om\whw}}_{g,z})^{\prime}\|^2_{\BMOA(\infty,\om)}\\
	&\asymp\sup_{z\in\D}\sup_{0<R<1}
	\left(\whw(R)^2\sup_{a\in\D}\int_{\D}|(G^{D^2_{\om\whw}}_{g,z})^{\prime\prime}(Rw)R|^2(1-|\varphi_a(w)|^2)\,dA(w)\right)\\
	&\ge\sup_{z\in\D}\sup_{0<R<1}
	\left(\whw(R)^2(1-|z|)\int_{\D}\left|\left(\sum_{k=0}^\infty\frac{R^{k+2}(k+1)}{2(k+2)(\om\whw)_{2k+3}}
\left(\sum_{n=0}^\infty\frac{\widehat{g}(n+1)(n+1)z^{n+k+3}}{n+k+3}\right)\overline{w}^k\right)\right.\right.\\
	&\quad\cdot\left.\left.\left(\sum_{j=0}^\infty z^j\overline{w}^j\right)\right|^2(1-|w|)\,dA(w)\right)\\
	&=\sup_{z\in\D}\sup_{0<R<1}\Bigg(\whw(R)^2\Bigg((1-|z|)\\
	&\quad\cdot\left.\int_\D\left|\sum_{m=0}^\infty\sum_{k=0}^m\frac{R^{k+2}(k+1)}{2(k+2)(\om\whw)_{2k+3}}
\left(\sum_{n=0}^\infty\frac{\widehat{g}(n+1)(n+1)z^{n+m+3}}{n+k+3}\right)\overline{w}^m\right|^2(1-|w|)\,dA(w)\Bigg)\right)\\
	&\asymp\sup_{z\in\D}\sup_{0<R<1}
	\left(\whw(R)^2\sum_{m=0}^\infty\frac{(1-|z|)}{(m+1)^2}\left|\sum_{k=0}^m\frac{R^{k+2}(k+1)}{(k+2)(\om\whw)_{2k+3}}
	\left(\sum_{n=0}^\infty\frac{\widehat{g}(n+1)(n+1)z^{n+m+3}}{n+k+3}\right)\right|^2\right)\\
	&\gtrsim\sup_{0<t<1}\sup_{0<R<1}
	\left(\whw(R)^2(1-t)\left(\sum_{m=0}^\infty\frac{t^{2m+6}}{(m+1)^2}\right.\right.\\
&\quad\cdot\left.\left.\int_0^{2\pi}\left|\sum_{n=0}^\infty\widehat{g}(n+1)(n+1)
\left(\sum_{k=0}^m\frac{R^{k+2}(k+1)}{2(n+k+3)(k+2)(\om\whw)_{2k+3}}\right)t^ne^{i\theta}\right|^2\,d\theta\right)\right)\\
&\gtrsim\sup_{0<t<1}\sup_{0<R<1}\left(\whw(R)^2(1-t)t^{2N+6}\sum_{m=0}^\infty\frac{t^{2m}}{(m+1)^2}\left(\sum_{k=0}^m\frac{R^{k+2}}{(k+1)(\om\whw)_{2k+3}}\right)^2\right).
\end{split}
\end{equation*}
Since $\om\in\DD$, we have $\om\whw\in\DD$.
This fact and Lemma~\ref{Eqdhat}(ii) together with standard arguments yield
	\begin{equation}\label{poiu}
	\begin{split}
	\sum_{m=0}^\infty\frac{t^{2m}}{(m+1)^2}\left(\sum_{k=0}^m\frac{R^k}{(k+1)(\om\whw)_{2k+3}}\right)^2
	&\asymp\int_1^\infty\frac{t^{2x}}{x^2}\left(\int_1^x\frac{R^y}{y(\om\whw)_y}\,dy\right)^2\,dx\\
	&=\int_0^1t^{\frac{2}{1-r}}\left(\int_0^r\frac{R^{\frac{1}{1-s}}}{\widehat{\om\whw}(s)(1-s)}\,ds\right)^2\,dr\\
	&\gtrsim\int_0^1(Rt)^{\frac{2}{1-r}}\frac{dr}{{\whw}(r)^4}\\
	&\gtrsim\frac{(1-Rt)}{{\whw}(Rt)^4},\quad \frac12\le t,R<1.
	\end{split}
	\end{equation}
Therefore
	\begin{align*}
	\infty&>\sup_{z\in\D}\|G^{D^2_{\om\whw}}_{g,z}\|_{\BMOA'(\infty,\om)}\\
	&\gtrsim\sup_{\frac12<t<1}\sup_{\frac12<R<1}\left(\whw(R)^2(1-t)t^{2N+6}\sum_{m=0}^\infty\frac{t^{2m}}{(m+1)^2}\left(\sum_{k=0}^m\frac{R^{k+2}}{(k+1)(\om\whw)_{2k+3}}\right)^2\right)\\
	&\gtrsim\sup_{\frac12<t<1}\sup_{\frac12<R<1}\whw(R)^2(1-t)t^{2N+6}\frac{(1-Rt)}{{\whw}(Rt)^4}\\
	&\gtrsim\sup_{\frac12<t<1}\left(\frac{1-t}{\whw(t)}\right)^2
	\asymp\sup_{0\le t<1}\left(\frac{1-t}{\whw(t)}\right)^2.
	\end{align*}
This contradicts (ii). Thus we have shown that (ii) implies (i).

The last part of the proof of Theorem~\ref{compactDpless1} shows that if (iv) is not satisfied, that is, $D^1_\om$ is continuously embedded into $D^1_0$, then $T(D^1_\om,H^\infty)$ contains all analytic functions with bounded derivative. This observation implies that (iv) follows from (i), and completes the proof of the theorem.
\end{proof}

Theorem~\ref{th:p=1} has the following interesting consequence.

\begin{corollary}
Let $\omega\in\DD$, $g\in\H(\D)$ and $X^{\om}_{1}\in\{\HL^{\om}_{1}, A^1_{\om}, D^1_{\widetilde{\om}_1}\}$. Then $T(X^{\om}_{1},H^\infty)$ consists of constant functions only.
\end{corollary}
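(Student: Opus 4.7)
The plan is to reduce the three cases to a single one via Proposition~\ref{inclusion}(i) with $p=1$, and then apply Theorem~\ref{th:p=1} with the weight $\whw$ in place of $\om$. First I would note that the definitions of $\widetilde{\om}$ and $\om_{[\beta]}$ give $\widetilde{\om}_{[1]}(z)=\widetilde{\om}(z)(1-|z|)=\whw(z)$, which identifies the smallest of the three candidate spaces as $D^1_{\whw}$. Proposition~\ref{inclusion}(i) with $p=1$ supplies the continuous embeddings
$$
D^1_{\whw}\hookrightarrow A^1_\om\hookrightarrow\HL^\om_1,
$$
so whenever $T_g:X^\om_1\to H^\infty$ is bounded for one of the three choices of $X^\om_1$, a fortiori $T_g:D^1_{\whw}\to H^\infty$ is bounded as well. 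It therefore suffices to prove that $T(D^1_{\whw},H^\infty)$ consists of constant functions only.

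For that step I would apply Theorem~\ref{th:p=1} with $\whw$ in place of $\om$. Two verifications are required. The first, $\whw\in\DD$, is a short computation: splitting $\widehat{\whw}(r)=\int_r^{(1+r)/2}\whw(s)\,ds+\widehat{\whw}(\tfrac{1+r}{2})$, bounding the first piece by $\tfrac{1-r}{2}\whw(r)$ using the monotonicity of $\whw$, and then invoking $\om\in\DD$ to dominate $\whw(r)$ by a constant multiple of $\whw(\tfrac{3+r}{4})\lesssim\tfrac{1}{1-r}\widehat{\whw}(\tfrac{1+r}{2})$ closes the check. The second verification is condition (ii) of Theorem~\ref{th:p=1} applied to $\whw$, which reads
$$
\sup_{0\le r<1}\frac{1-r}{\widehat{\whw}(r)}=\infty.
$$
This is immediate: since $\whw$ is non-increasing one has $\widehat{\whw}(r)\le(1-r)\whw(r)$, hence
$$
\frac{1-r}{\widehat{\whw}(r)}\ge\frac{1}{\whw(r)}\longrightarrow\infty\quad\text{as }r\to 1^-,
$$
because $\om\in L^1([0,1))$ forces $\whw(r)\to 0$.

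With these two facts in hand, Theorem~\ref{th:p=1} yields that $T(D^1_{\whw},H^\infty)$ contains only constants, and the chain of embeddings lifts this conclusion to $A^1_\om$ and $\HL^\om_1$. I do not anticipate any genuine obstacle here: the point of the corollary is precisely that it is a one-line consequence of combining the dual-space characterisation underlying Theorem~\ref{th:p=1} with the universal embedding from Proposition~\ref{inclusion}(i) at the endpoint $p=1$.
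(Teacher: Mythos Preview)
Your argument is correct and follows essentially the same route as the paper: reduce via Proposition~\ref{inclusion}(i) to $D^1_{\whw}$, then apply Theorem~\ref{th:p=1} with $\whw$ in place of $\om$. The paper is slightly terser, writing the asymptotic $\frac{1-r}{\widehat{\whw}(r)}\asymp\frac{1}{\whw(r)}$ directly (and tacitly using that $\widetilde{\om}_{[p]}\in\DD$ was already established in the proof of Proposition~\ref{inclusion}), but your one-sided inequality $\widehat{\whw}(r)\le(1-r)\whw(r)$ is all that is needed and your explicit verification of $\whw\in\DD$ is fine.
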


\begin{proof}
Since
	$$
	\frac{(1-r)}{\widehat{\whw}(r)}\asymp\frac{1}{\whw(r)}\to\infty,\quad r\to1^-,
	$$
the assertion follows by Proposition~\ref{inclusion} and Theorem~\ref{th:p=1}.
\end{proof}

Next, we prove the compact version of Theorem~\ref{th:p=1}, and hence prove the compactness part for the case $p=1$ of Theorem~ \ref{intro:0<p<1}.
\begin{theorem}\label{compactDp=1}
Let $\omega\in\DD$ and $g\in\H(\D)$. Then the following statements are equivalent:
\begin{itemize}
\item[(i)] $T_c(D^1_{\om}, H^{\infty})$ consists of constant functions only;
\item[(ii)] $\displaystyle\limsup_{r\to1^-}\frac{(1-r)}{\whw(r)}>0$;
\item[(iii)] $\displaystyle\limsup_{r\to1^-}(1-r)\sum_{k=1}^\infty\frac{r^k}{(k+1)\om_k}>0$;
\item[(iv)] $I: D^1_{\om}\rightarrow D^1_0$ is not compact.
\end{itemize}
\end{theorem}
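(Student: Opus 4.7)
The plan is to adapt the strategy used for Theorem~\ref{compactDpless1} to the endpoint $p=1$, replacing the Zygmund duality (Lemma~\ref{lemma:dual-Dirichlet}) by the $\BMOA(\infty,\om)$-duality of Lemma~\ref{le:duald1omhat}. First, the equivalences (ii) $\Leftrightarrow$ (iii) $\Leftrightarrow$ (iv) come for free from Lemma~\ref{AuxLemma} specialized to $p=1$. What remains is to show (ii) $\Rightarrow$ (i) and (i) $\Rightarrow$ (iv).

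For (ii) $\Rightarrow$ (i), I would argue by contradiction: suppose $T_g:D^1_\om\to H^\infty$ is compact with $\widehat{g}(N+1)\neq 0$ for some $N\geq 0$. Compactness, combined with the duality of Lemma~\ref{le:duald1omhat} and the compactness criterion of \cite{CPPR} used in the proof of Theorem~\ref{th:p=1}, should upgrade the BMOA$(\infty,\om)$ boundedness condition to its little-oh version, namely
\[
\lim_{R\to 1^-}\sup_{z\in\D}\left(\whw(R)^2\sup_{a\in\D}\int_\D\bigl|(G^{D^2_{\om\whw}}_{g,z})''(Rw)R\bigr|^2(1-|\vp_a(w)|^2)\,dA(w)\right)=0.
\]
Testing against $a=0$ and $z=t\in(0,1)$, and then expanding $G^{D^2_{\om\whw}}_{g,z}$ via its Taylor series exactly as in the long calculation in the proof of Theorem~\ref{th:p=1}, reduces the inner integral to a weighted double sum of the form $\sum_{m}\tfrac{t^{2m}}{(m+1)^2}\bigl(\sum_{k\le m}\tfrac{R^k}{(k+1)(\om\whw)_{2k+3}}\bigr)^2$. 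Invoking the moment lower bound \eqref{poiu}, which yields a quantity of order $(1-Rt)/\whw(Rt)^4$ when $\frac12\le t,R<1$, and then choosing $R=t\to 1^-$ produces
\[
0\gtrsim\limsup_{t\to 1^-}\left(\frac{1-t}{\whw(t)}\right)^2,
\]
contradicting (ii).

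For (i) $\Rightarrow$ (iv) I would use the contrapositive. Assume $I:D^1_\om\to D^1_0$ is compact. Combining Pommerenke's inequality \eqref{Eq:Pommerenke-lemma} with the pointwise bound \eqref{pili}, precisely as in the closing paragraph of the proof of Theorem~\ref{compactDpless1}, yields $\|T_g(f)\|_{H^\infty}\lesssim\|g'\|_{H^\infty}\|f\|_{D^1_0}$ for every $g\in\H(\D)$ with $g'\in H^\infty$, so $T_g:D^1_0\to H^\infty$ is bounded. Consequently, the factorization $T_g=T_g\circ I$ shows that $T_g:D^1_\om\to H^\infty$ is compact for every such $g$, forcing $T_c(D^1_\om,H^\infty)$ to contain non-constant symbols and contradicting (i).

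The main obstacle is Step (ii) $\Rightarrow$ (i): carefully distilling the correct little-oh version of the BMOA$(\infty,\om)$ characterization of compactness of $T_g$, and then repeating in the compact form the delicate lower-bound calculation of Theorem~\ref{th:p=1} to isolate the factor $(1-r)/\whw(r)$. The remaining steps are essentially free once the necessary machinery from Lemmas~\ref{le:duald1omhat}, \ref{AuxLemma} and the embedding argument of Theorem~\ref{compactDpless1} are in hand.
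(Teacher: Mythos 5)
Your overall architecture coincides with the paper's: Lemma~\ref{AuxLemma} gives (ii)$\Leftrightarrow$(iii)$\Leftrightarrow$(iv); the contrapositive of (i)$\Rightarrow$(iv) is exactly the closing embedding argument of Theorem~\ref{compactDpless1}; and (ii)$\Rightarrow$(i) is done by contradiction via a little-oh upgrade of the $\BMOA(\infty,\om)$ condition followed by the lower-bound computation of Theorem~\ref{th:p=1} and the moment estimate \eqref{poiu}. Your formulation of the little-oh condition (vanishing of $\whw(R)^2\sup_a\int_\D|(G^{D^2_{\om\whw}}_{g,z})''(Rw)R|^2(1-|\vp_a(w)|^2)\,dA(w)$ as $R\to1^-$, uniformly in $z$) differs from the paper's tail-integral version \eqref{eqp=1a}, but it is equally derivable from relative compactness of $\{G^{D^2_{\om\whw}}_{g,z}\}$ in $\BMOA'(\infty,\om)$ and suffices for the argument.

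There is, however, one step that fails as written: you propose to test against $a=0$. With $a=0$ the factor $1-|\vp_a(w)|^2$ is just $1-|w|^2$, so no convolution with the Cauchy kernel $\sum_j z^j\overline{w}^j$ occurs, and the inner integral reduces to $\sum_k (k+1)^{-2}|c_k|^2R^{2k}$ rather than to the Ces\`aro-type double sum $\sum_m\frac{t^{2m}}{(m+1)^2}\bigl(\sum_{k\le m}\frac{R^{k}}{(k+1)(\om\whw)_{2k+3}}\bigr)^2$ that you (correctly) want. Quantitatively, the $a=0$ choice yields a lower bound of order $\whw(R)^2\,(1-Rt)^3/\whw(Rt)^4$, which at $R=t$ is $(1-t)\bigl((1-t)/\whw(t)\bigr)^2$; this tends to $0$ for \emph{every} $\om\in\DD$ (take $\whw(r)=1-r$), so no contradiction with (ii) can be extracted. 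The inner partial sums $\sum_{k\le m}$, and the compensating factor $(1-|z|)$ in your final display, both come precisely from choosing $a=z$ (so that $1-|\vp_a(w)|^2=(1-|z|^2)(1-|w|^2)/|1-\bar zw|^2$), which is what the paper does in Theorems~\ref{th:p=1} and \ref{compactDp=1}. Replace $a=0$ by $a=z$ and the rest of your computation — radialization in $\arg z$ to isolate $\widehat{g}(N+1)$, the estimate \eqref{poiu}, and the choice $R=t\to1^-$ — goes through exactly as in the paper.
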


\begin{proof}
Lemma~\ref{AuxLemma} and the proof of Theorem~\ref{compactDpless1} show that (ii)--(iv) are equivalent, and (i) implies (iv). Therefore it remains to show that (ii) implies (i).

Assume (ii) and suppose on the contrary to (i) that there exists a $g\in T_c(D^1_\om,H^\infty)$ such that $\widehat{g}(N+1)\ne0$ for some $N\in\N\cup\{0\}$. First observe that an argument similar to that used in the proof of \cite[Theorem~2(iii)]{PRW} gives
	\begin{equation}\label{eqp=1a}
	\lim_{R\rightarrow1^-}\sup_{a,z\in\D}\sup_{0<r<1}\left(\whw^2(r)\int_{\D\setminus D(0,R)}|(G^{D^2_{\om\whw}}_{g,z})^{\prime\prime}(rw)r|^2(1-|		\varphi_a(w)|)\,dA(w)\right)=0
	\end{equation}
for each $g\in T_c(D^1_\om, H^\infty)$.

Arguing as in the proof of Theorem~\ref{th:p=1} we deduce
	\begin{equation*}
	\begin{split}
	&\quad\sup_{a,z\in\D}\sup_{0<r<1}\left(\whw(r)^2\int_{\D\setminus D(0,R)}|(G^{D^2_{\om\whw}}_{g,z})^{\prime\prime}(rw)r|^2(1-|\varphi_a(w)|)\,dA(w)\right)\\
	&\gtrsim\sup_{z\in\D}\sup_{0<r<1}\left(\whw(r)^2(1-|z|)\sum_{m=0}^\infty\int_R^1s^{2m+1}(1-s)\,ds\left|\sum_{k=0}^m\frac{r^{k+2}(k+1)}{2(k+2)(\om\whw)_{2k+3}}\right.\right.\\
	&\quad\cdot\left.\left.\left(\sum_{n=0}^\infty\frac{\widehat{g}(n+1)(n+1)z^{n+m+3}}{n+k+3}\right)\right|^2\right)\\
	&\gtrsim\sup_{0<t<1}\sup_{0<r<1}\left(\whw(r)^2(1-t)t^{2N+6}\int_R^1(1-s)\left(
\sum_{m=0}^\infty(st)^{2m}\left(\sum_{k=0}^m\frac{r^k}{(k+1)(\om\whw)_{2k+3}}\right)^2\right)\,ds\right)\\
	&\ge\sup_{0<t<1}\sup_{0<r<1}\left(\whw(r)^2(1-t)t^{2N+6}\int_R^1(1-s)\left(
	\sum_{m=0}^\infty(rst)^{2m}\left(\sum_{k=0}^m\frac{1}{(k+1)(\om\whw)_{2k+3}}\right)^2\right)\,ds\right).
	\end{split}
	\end{equation*}
By  \eqref{poiu} we obtain
	\begin{equation*}
	\sum_{m=0}^\infty(rst)^{2m}\left(\sum_{k=0}^m\frac{1}{(k+1)(\om\whw)_{2k+3}}\right)^2
	\gtrsim\frac{1}{(1-rst)\whw(rst)^4},\quad \frac12< r,s,t<1.
	\end{equation*}
The above estimates yield
	\begin{align*}
	&\quad\limsup_{R\rightarrow1^-}\sup_{a,z\in\D}\sup_{0<r<1}\left(\whw(r)^2\int_{\D\setminus D(0,R)}|(G^{D^2_{\om\whw}}_{g,z})^{\prime\prime}(rw)r|^2(1-|\varphi_a(w)|)\,dA(w)\right)\\
	&\gtrsim\limsup_{R\rightarrow1^-}\sup_{\frac12<t<1}\sup_{\frac12<r<1}\left(\whw(r)^2(1-t)t^{2N+6}\int_R^1\frac{1-s}{(1-rst)\whw(rst)^4}\,ds\right)\\
	&\gtrsim\limsup_{R\rightarrow1^-}\left(\whw(R)^2(1-R)\frac{1-R}{\whw(R)^4}\right)
	=\limsup_{R\rightarrow1^-}\left(\frac{1-R}{\whw(R)}\right)^2>0.
	\end{align*}
This contradiction with \eqref{eqp=1a} finishes the proof.
\end{proof}

Now we prove Theorem~\ref{nonnegativepless1intro}(ii).

\begin{proposition}
Let $\omega\in\DD$, and let $g\in H^\infty$ such that $\widehat{g}(n)\geq0$ for all $n\in\mathbb{N}\cup\{0\}$. Then $T_g:D^1_{\om}\to H^\infty$ is bounded if and only if
	$$
	\sup_{0<r,s<1}\left(\whw(r)^2\sum_{m=0}^\infty\frac{1-s}{(m+1)^2}\left(\sum_{k=0}^m\frac{r^{k+2}}
	{(\om\whw)_{k+1}}\sum_{n=0}^\infty\frac{\widehat{g}(n+1)(n+1)}{n+k+1}s^{n+m+3}\right)^2\right)<\infty,
	$$
and
	$$
	\|T_g\|_{D^1_{\om}\rightarrow H^\infty}^2
	\asymp\sup_{0<r,s<1}\left(\whw(r)^2\sum_{m=0}^\infty\frac{1-s}{(m+1)^2}
	\left(\sum_{k=0}^m\frac{r^{k+2}}{(\om\whw)_{k+1}}\sum_{n=0}^\infty\frac{\widehat{g}(n+1)(n+1)}{n+k+1}s^{n+m+3}\right)^2\right).
	$$
Moreover, $T_g:D^1_{\om}\to H^\infty$ is compact if and only if
\begin{equation}\label{eqcptp=1}
\limsup_{r\rightarrow1^-}\limsup_{s\rightarrow1^-}\left(\whw(r)^2\sum_{m=0}^\infty\frac{1-s}{(m+1)^2}\left(\sum_{k=0}^m\frac{r^{k+2}s^{m-k}}
{(\om\whw)_{k+1}}\sum_{n=0}^\infty\frac{\widehat{g}(n+1)(n+1)}{n+k+1}\right)^2\right)=0.
\end{equation}
\end{proposition}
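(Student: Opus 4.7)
The plan is to follow the template of Proposition~\ref{nonnegativepless1}, substituting the duality $(D^1_\om)^\star \simeq \BMOA'(\infty,\om)$ from Lemma~\ref{le:duald1omhat} for the one used there. By the abstract criterion of \cite[Theorem~2.2]{CPPR} applied via the pairing $\langle\cdot,\cdot\rangle_{D^2_{\om\whw}}$, the operator $T_g\colon D^1_\om \to H^\infty$ is bounded if and only if
\[
\|T_g\|_{D^1_\om \to H^\infty} \asymp \sup_{z\in\D}\|G^{D^2_{\om\whw}}_{g,z}\|_{\BMOA'(\infty,\om)} < \infty,
\]
where $\overline{G^{D^2_{\om\whw}}_{g,z}(w)} = \int_0^z g'(\zeta)\,\overline{K^{D^2_{\om\whw}}_\zeta(w)}\,d\zeta$. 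My task reduces to showing that this supremum is comparable to $A_\om^{1/2}$.

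The second derivative $(G^{D^2_{\om\whw}}_{g,z})''(Rw)R$ is obtained from the formulas in the proof of Theorem~\ref{th:p=1} by a Cauchy-product re-indexing $n+k \mapsto m$, yielding a $\bar w$-series whose coefficients $b_m(z,R) = \sum_{k=0}^m \frac{R^{k+2}(k+1)}{2(k+2)(\om\whw)_{2k+3}} \sum_{n=0}^\infty \frac{\widehat{g}(n+1)(n+1)z^{n+m+3}}{n+k+3}$ depend polynomially on $z$ with non-negative scalar coefficients. Applying Garsia's characterization $\|h\|_{\BMOA}^2 \asymp \sup_a \int_\D |h'|^2(1-|\varphi_a|^2)\,dA + |h(0)|^2$ to $((G^{D^2_{\om\whw}}_{g,z})')_R$ and multiplying by $\whw(R)^2$ reduces the problem to estimating
\[
\sup_{0<R<1}\whw(R)^2\sup_{a\in\D}\int_\D \Bigl|\sum_{m=0}^\infty b_m(z,R)\bar w^m\Bigr|^2(1-|\varphi_a(w)|^2)\,dA(w)
\]
plus lower-order terms from $G^{D^2_{\om\whw}}_{g,z}(0)$ and $(G^{D^2_{\om\whw}}_{g,z})'(0)$ controlled by the boundedness criterion. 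The lower bound $A_\om \lesssim \|T_g\|^2$ is already present verbatim in the chain of inequalities in the proof of Theorem~\ref{th:p=1}: taking $a=0$, setting $z = se^{i\theta}$, integrating in $\theta$ and applying Parseval produces exactly the summand defining $A_\om$, provided the hypothesis $\widehat{g}(n)\geq 0$ rules out cancellations. For the reverse inequality, the sup over $a$ must be reduced to $a = 0$ up to constants depending only on $\om \in \DD$. This uses the fact that $b_m(z,R)$ has a consistent sign structure in $z$, together with the uniform monomial estimate $\sup_{a\in\D}\int_\D |w|^{2m}(1-|\varphi_a(w)|^2)\,dA(w) \asymp (m+1)^{-2}$, which follows from Garsia applied to $z^{m+1}$.

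The compactness statement is handled as in the proof of Proposition~\ref{nonnegativepless1}. For necessity, $T_g$ compact implies $T_g^\star$ compact; since $T_g^\star(K^{D^2_{\om\whw}}_z) = G^{D^2_{\om\whw}}_{g,z}$ and $\|K^{D^2_{\om\whw}}_z\|_{(H^\infty)^\star} \leq 1$, the family $\{G^{D^2_{\om\whw}}_{g,z}\}_{z\in\D}$ is relatively compact in $\BMOA'(\infty,\om)$; an $\varepsilon$-net argument, combined with the same diagonal reduction ($a=0$, $z = se^{i\theta}$) used for boundedness and the fact that each individual $G^{D^2_{\om\whw}}_{g,z_j}$ satisfies the little-oh decay, yields the joint limsup condition \eqref{eqcptp=1}. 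For sufficiency, assuming \eqref{eqcptp=1}, we test compactness against a norm-bounded sequence $\{f_n\} \subset D^1_\om$ converging to zero uniformly on compact subsets: writing $T_g(f_n)(z) = \langle f_n, G^{D^2_{\om\whw}}_{g,z}\rangle_{D^2_{\om\whw}}$, splitting the integral over $D(0,R)$ and $\D \setminus D(0,R)$, and using uniform convergence inside while invoking \eqref{eqcptp=1} and the boundedness estimate outside, yields $\|T_g f_n\|_{H^\infty} \to 0$.

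The main obstacle is the upper-bound reduction of the sup over $a$ in the Garsia characterization to its value at $a=0$. The lower bound in the proof of Theorem~\ref{th:p=1} already handles one direction for the triviality question, but achieving two-sided comparability with $A_\om$ genuinely requires the hypothesis $\widehat{g}(n)\geq 0$: without it, the cross-terms in the expansion of $|\sum b_m \bar w^m|^2$ cannot be signed consistently, and the sup-over-$a$ may strictly exceed the $a=0$ value. This is also the step where the structural difference between the characterizations in Theorem~\ref{nonnegativepless1intro}(i)--(iii) becomes visible, since the $\BMOA$-norm carries more information than the $\ZZ$- or $\HL_{p'}^\om$-norms of the dual.
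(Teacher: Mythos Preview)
Your overall strategy—invoking \cite[Theorem~2.2]{CPPR} through the duality of Lemma~\ref{le:duald1omhat} and then estimating $\sup_{z}\|G^{D^2_{\om\whw}}_{g,z}\|_{\BMOA'(\infty,\om)}$—matches the paper. The compactness sketch (relative compactness of $\{G^{D^2_{\om\whw}}_{g,z}\}$, $\varepsilon$-net, and the test against $f_n\to 0$ split radially) is also in line with the paper's argument. The gap is in the boundedness analysis, and it stems from a misreading of the formula in the proof of Theorem~\ref{th:p=1}.

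Your $b_m(z,R)$ is \emph{not} the $m$-th Taylor coefficient of $(G^{D^2_{\om\whw}}_{g,z})''(Rw)R$. Differentiating the explicit series for $G^{D^2_{\om\whw}}_{g,z}$ shows that the $m$-th coefficient of $(G^{D^2_{\om\whw}}_{g,z})''(Rw)R$ is a single term in $k=m$, with no finite sum $\sum_{k=0}^m$. The Cauchy product you wrote down is exactly the $m$-th coefficient of the product
\[
(G^{D^2_{\om\whw}}_{g,z})''(Rw)R\cdot\frac{1}{1-z\bar w},
\]
and the factor $(1-z\bar w)^{-1}$ appears in the proof of Theorem~\ref{th:p=1} because the Garsia point is taken to be $a=z$, not $a=0$. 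With the choice $a=0$ the kernel is radial, Parseval gives simply $\sum_k |d_k|^2/(k+1)^2$ with $d_k$ the genuine Taylor coefficients of $(G^{D^2_{\om\whw}}_{g,z})''(R\,\cdot)R$, and no structure of the form $\sum_{k=0}^m(\cdots)$ arises. So neither the lower bound (``taking $a=0$ \ldots\ produces exactly the summand defining $A_\om$'') nor the proposed upper-bound reduction (``the sup over $a$ must be reduced to $a=0$'') can produce or control $A_\om$. The monomial estimate you cite is correct but irrelevant here, since the off-diagonal terms in $|\sum_m b_m\bar w^m|^2$ against the non-radial weight $(1-|\varphi_a|^2)$ do not vanish.

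The paper's route, by contrast, takes $a=z$ in Garsia's formula (this is what generates $A_\om$ after Parseval in $w$), and then uses the hypothesis $\widehat g(n)\ge 0$ in two places: first to pass from $z\in\D$ to $z=s\in(0,1)$ by coefficient domination, and second to replace the sup over $a$ by its value on $(0,1)$, after which Fatou's lemma handles the coupling of the remaining radial parameters. If you correct the identification of $b_m$ to ``coefficient of $\bar w^m$ in $(G^{D^2_{\om\whw}}_{g,z})''(Rw)R/(1-z\bar w)$'' and run your argument with $a=z$ rather than $a=0$, it becomes essentially the paper's proof.
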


\begin{proof}
The boundedness can be verified by using \cite[Theorem~1.1]{CPPR}, an argument similar to that used in the proof of Theorem \ref{th:p=1}, and Fatou's lemma.

To verify the assertion on the compactness, assume first that $T_g:D^1_{\om}\rightarrow H^\infty$ is compact. A similar proof as that used in Proposition \ref{nonnegativepless1} together with Lemma \ref{le:duald1omhat} show that $\{G^{D^2_W}_{g,z}:z\in\D\}$ is relatively compact in $\BMOA'(\infty,\om)$. Hence, for a given $\varepsilon>0$, there exist $z_1,z_2,\ldots,z_N\in\D$ such that for each $z\in\D$, we have $\|G^{D^2_W}_{g,z}-G^{D^2_W}_{g,z_j}\|_{\BMOA'(\infty,\om)}<\varepsilon$ for some $j=j(z)\in\{1,\ldots,N\}$. Therefore, we get
	\begin{equation*}
	\begin{split}
	\left\|\left(G^{D^2_{\om\whw}}_{g,z}\right)_r\right\|_{\BMOA'(\infty,\om)}
	\le\e+\left\|\left(G^{D^2_{\om\whw}}_{g,z_j}\right)_r\right\|_{\BMOA'(\infty,\om)},
	\end{split}
	\end{equation*}
and thus
	$$
	\limsup_{r\rightarrow1^-}\sup_{z\in\D}\left\|(G^{D^2_{\om\whw}}_{g,z})_r\right\|_{\BMOA'(\infty,\om)}=0.
	$$
Consequently, by the hypothesis $\widehat{g}(n)\ge0$ for all $n\in\mathbb{N}\cup\{0\}$ and the first part of the proof, we have
\begin{align*}
\limsup_{r\rightarrow1^-}\limsup_{s\rightarrow1^-}\whw^2(r)\sum_{m=0}^\infty\frac{1-s}{(m+1)^2}\left(\sum_{k=0}^m\frac{r^{k+2}s^{m-k}}
{(\om\whw)_{k+1}}\sum_{n=0}^\infty\frac{\widehat{g}(n+1)(n+1)}{n+k+1}\right)^2=0.
\end{align*}

Conversely, assume that \eqref{eqcptp=1} holds. To prove the compactness of $T_g: D^1_\om\rightarrow H^\infty$, it suffices to show that each norm bounded family $\{f_n\}$ in $D^1_\om$ such that $f_n\rightarrow0$ uniformly on compact subsets of $\D$ satisfies $\|T_g(f_n)\|_{H^\infty}\rightarrow0$, as $n\to\infty$. Let now $\{f_n\}$ be such a family. By the uniform convergence we may choose $N=N(\varepsilon,R)\in\mathbb{N}$ such that $\max\{|f_n(0)|,|f'_n(0)|\}<\varepsilon$ for all $n\ge N$ and $\xi\in\overline{D(0,R)}$. It is easy to see that
\begin{equation}\label{eqp=11}
	\sup_{z\in\D}(|(G^{D^2_{\om\whw}}_{g,z})'(0)||(f_n)'(0)|
	+|G^{D^2_{\om\whw}}_{g,z}(0)||f_n(0)|)
	\lesssim\varepsilon \sup_{z\in\D}\|G^{D^2_{\om\whw}}_{g,z}\|_{\BMOA'(\infty,\om)}
	\lesssim\varepsilon,\quad n\ge N.
	\end{equation}	
Further, by \eqref{eqcptp=1}, for each $\varepsilon>0$ there exists an $R=R(\varepsilon)\in(0,1)$ such that
	$$
	\sup_{r\ge R, z\in\D\setminus D(0,R)}\whw(r)\left\|\left(G^{D^2_{\om\whw}}_{g,z}\right)'_r\right\|_{\BMOA}<\varepsilon.
	$$
Therefore, this together with \eqref{eqp=11} and the well-known duality $(H^1)^\star\simeq\BMOA$ via the $H^2$-pairing, implies
	\begin{equation*}
	\begin{split}
    \|T_g(f_n)\|_{H^\infty}
		&=\sup_{z\in\D\setminus D(0,R)}\left|\langle f_n, G^{D^2_{\om\whw}}_{g,z}\rangle_{D^2_{\om\whw}}\right|\\
		&\le\sup_{z\in\D\setminus D(0,R)}\left|\int_\D f'_n(\zeta)(G^{D^2_{\om\whw}}_{g,z})'(\zeta)\om(\zeta)\whw(\zeta)\,dA(\zeta)\right|
		+\left|G^{D^2_{\om\whw}}_{g,z}(0)\right||f_n(0)|\\
	&\lesssim\varepsilon+\sup_{z\in\D\setminus D(0,R)}\int_R^1\om(r)\whw(r)r \left|\int_0^{2\pi}f'_n(re^{i\theta})\left(G^{D^2_{\om\whw}}_{g,z}\right)'(re^{i\theta})\,d\theta\right|\,dr\\
    &\lesssim\varepsilon+\sup_{z\in\D\setminus D(0,R)}\int_{R}^1\om(r)\whw(r)r\|(G^{D^2_{\om\whw}}_{g,z})_r'\|_{\BMOA}\int_0^{2\pi}|f'_n(re^{i\theta})|\,d\theta\,dr
    \\ & \lesssim\varepsilon+ \sup_{r\ge R, z\in\D\setminus D(0,R)}\whw(r)\left\|\left(G^{D^2_{\om\whw}}_{g,z}\right)'_r\right\|_{\BMOA}
    \|f_n\|_{D^1_\omega}
		\lesssim\varepsilon,\quad n\ge N.
	\end{split}
	\end{equation*}
Hence $\lim_{n\rightarrow\infty}\|T_g(f_n)\|_{H^\infty}=0$. The proof is complete.
\end{proof}

\section{Case $1<p<\infty$}\label{Sec:p>1}

For each $g\in\H(\D)$, with Maclaurin series expansion $g(z)=\sum_{k=0}^\infty \widehat{g}(k)z^k$, consider the dyadic polynomials defined by $\Delta_0 g (z)=g(0)$ and $\Delta_n g (z)=\sum_{k=2^{n}}^{2^{n+1}-1}\widehat{g}(k)z^k$ for all $n\in\N$ and $z\in\D$. Then, obviously, $g=\sum_{n=0}^\infty\Delta_n g$. Further, write $\Delta_0=1$ and $\Delta_n(z)= \sum_{k=2^{n}}^{2^{n+1}-1}z^k$ for all $n\in\N$ and $z\in\D$. Then \cite[Lemma~2.7]{CPPR} shows that
	\begin{equation}\label{Deltanorm}
	\|\Delta_n\|_{H^p}\asymp 2^{\frac{n}{p'}},\quad 1<p<\infty,\quad n\in \N\cup\{0\}.
	\end{equation}
For $a\in\D$, denote $f_a(z)=f(az)$ for all $z\in\D$. Without loss of generality, one can suppose that $\int_0^1\omega(t)dt=1$. For each $n\in\N\cup\{0\}$, define $r_n=r_n(\omega)\in[0,1)$ by
$$
\widehat{\om}(r_n)=\int_{r_n}^1\om(s)ds=\frac{1}{2^n}.
$$
For each $x\in[0,\infty)$, let $E(x)\in\N\cup\{0\}$ such that $E(x)\le x<E(x)+1$, and set $M_n=E(\frac{1}{1-r_n})$. Write
$$
I_{\om}(0)=\{k\in\N\cup\{0\}:k<M_1\}
$$
and
	$$
	I_{\om}(n)=\{k\in\N:M_n\leq k<M_{n+1}\},\quad n\in\N.
	$$
If $f(z)=\sum_{n=0}^{\infty}\widehat{f}(n)z^n$ is analytic in $\D$, define the polynomials $\Delta_n^{\om}f$ by
	$$
	\Delta_n^{\om}f(z)=\sum_{k\in I_{\om}(n)}\widehat{f}(k)z^k,\quad n\in\N\cup\{0\}.
	$$
The Hadamard product of $f,g\in\H(\D)$ is formally defined by $(f*g)(z)=\sum_{n=0}^\infty\widehat{f}(n)\widehat{g}(n)z^n$.

We next prove the statement in Theorem~\ref{nonnegativepless1intro}(iii).

\begin{theorem}\label{TgDpom}
Let $1<p<\infty$ and $\omega\in\DD$, and let $g\in H^\infty$ with $\widehat{g}(n)\geq0$ for all $n\in\mathbb{N}\cup\{0\}$. Then $T_g: D^p_{\omega}\rightarrow H^\infty$ is bounded (equivalently compact) if and only if
	\begin{equation}\label{condition}
	\sum_{k=0}^{\infty}\frac{(k+1)^{-2}}{(\omega_k)^{p'-1}}
	\left(\sum_{n=0}^{\infty}\frac{(n+1)\widehat{g}(n+1)}{n+k+1}\right)^{p'}<\infty.
	\end{equation}
Moreover,
	\begin{equation}\label{conditioniuy}
	\|T_g\|_{D^p_{\om}\rightarrow H^{\infty}}^{p'}\asymp\sum_{k=0}^{\infty}\frac{(k+1)^{-2}}{(\omega_k)^{p'-1}}
	\left(\sum_{n=0}^{\infty}\frac{(n+1)\widehat{g}(n+1)}{n+k+1}\right)^{p'}.
	\end{equation}
\end{theorem}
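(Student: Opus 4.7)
The plan is to combine the abstract criterion of \cite[Theorem 1.1]{CPPR} with the duality $(D^p_\om)^\star\simeq D^{p'}_\om$ provided by Lemma \ref{le:dualdp}, and then to convert the resulting dual-norm expression into the explicit coefficient sum via the decomposition-norm theorem for $D^{p'}_\om$ and the universal C\'esaro basis of polynomials. By these two ingredients,
$$\|T_g\|_{D^p_\om\to H^\infty}\asymp\sup_{z\in\D}\|G^{D^2_\om}_{g,z}\|_{D^{p'}_\om}.$$
Using the reproducing kernel $K^{D^2_\om}_\zeta(w)=1+\sum_{k\ge1}\frac{\overline{\zeta}^kw^k}{2k^2\om_{2k-1}}$ of $D^2_\om$, a straightforward integration gives
$$G^{D^2_\om}_{g,z}(w)=\overline{g(z)}+\sum_{k=1}^{\infty}\frac{\overline{z}^{k+1}}{2k^2\om_{2k-1}}\left(\sum_{n=0}^{\infty}\frac{(n+1)\overline{\widehat{g}(n+1)}\,\overline{z}^n}{n+k+1}\right)w^k.$$

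The non-negativity $\widehat{g}(n)\ge0$ together with the rotation invariance of the $D^{p'}_\om$-norm in the $w$-variable forces $\|G^{D^2_\om}_{g,z}\|_{D^{p'}_\om}$ to depend only on $|z|=r$, and the monotone dependence of every Maclaurin coefficient in $r\in[0,1)$ yields $\sup_{z\in\D}\|G^{D^2_\om}_{g,z}\|_{D^{p'}_\om}\asymp\|H\|_{D^{p'}_\om}$, where
$$H(w)=\overline{g(1)}+\sum_{k=1}^{\infty}\frac{c_k}{2k^2\om_{2k-1}}w^k,\qquad c_k:=\sum_{n=0}^{\infty}\frac{(n+1)\widehat{g}(n+1)}{n+k+1}.$$
The core of the argument is then the two-sided estimate
$$\|H\|_{D^{p'}_\om}^{p'}\asymp\sum_{k=0}^{\infty}\frac{c_k^{p'}}{(k+1)^2\om_k^{p'-1}}.$$
The derivative $H'$ has non-negative Maclaurin coefficients $(k+1)\widehat{H}(k+1)\asymp c_k/((k+1)\om_k)$ by Lemma \ref{Eqdhat}(iii). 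The plan is to apply the decomposition-norm theorem \cite[Theorem 3.4]{PeR} for $A^{p'}_\om$, giving $\|H'\|_{A^{p'}_\om}^{p'}\asymp\sum_{m\ge 0}\om_{M_m}\|\Delta_m^\om H'\|_{H^{p'}}^{p'}$, and then to estimate each block norm $\|\Delta_m^\om H'\|_{H^{p'}}$ via the universal C\'esaro basis. Since the coefficients of $H'$ are non-negative and $c_k$ is monotone in $k$, on the index set $I_\om(m)$ they are comparable to $c_{M_m}/(M_m\om_{M_m})$, and the universal C\'esaro basis then produces $\|\Delta_m^\om H'\|_{H^{p'}}\asymp M_m^{1/p'}\cdot c_{M_m}/(M_m\om_{M_m})$. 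Summing over the blocks and reindexing using $\om_{M_m}\asymp 2^{-m}$ and Lemma \ref{Eqdhat}(iii) converts the block sum into $\sum_k c_k^{p'}(k+1)^{-2}\om_k^{-(p'-1)}$, which together with the $|H(0)|^{p'}$-contribution gives \eqref{conditioniuy}.

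For the equivalence between boundedness and compactness under the non-negativity hypothesis, I would use a monotone approximation. If $g_N(z)=\sum_{n=0}^{N}\widehat{g}(n)z^n$, then $T_{g_N}$ has finite-dimensional range and is compact. Applying the just-proved estimate \eqref{conditioniuy} to the symbol $g-g_N$, whose Maclaurin coefficients are still non-negative, shows that $\|T_g-T_{g_N}\|^{p'}_{D^p_\om\to H^\infty}$ is dominated by the tail of the convergent series in \eqref{condition}. Letting $N\to\infty$ yields compactness of $T_g$ as soon as $T_g$ is bounded.

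I expect the main obstacle to be the block-by-block estimate needed for the core equivalence. The coefficients of $H'$ are not in general monotone in $k$, so the last lemma of Section 2 (which identifies $\|\cdot\|_{A^{p'}_\om}$ with $\|\cdot\|_{\HL^{\om}_{p'}}$ for functions with monotone non-negative coefficients) cannot be invoked directly. One is forced to work inside each $\om$-dyadic block with the universal C\'esaro basis of polynomials and to control the $H^{p'}$-norm of the resulting polynomials by exploiting both the non-negativity of the coefficients and the doubling property of $\om$ on the block $I_\om(m)$.
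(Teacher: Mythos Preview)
Your overall strategy matches the paper's: apply \cite[Theorem~1.1]{CPPR} together with Lemma~\ref{le:dualdp} to reduce to $\sup_{z\in\D}\|G^{D^2_\om}_{g,z}\|_{D^{p'}_\om}$, then analyze this via the block decomposition \cite[Theorem~3.4]{PeR} and the universal C\'esaro basis. However, two steps in your execution do not go through.

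\textbf{The reduction to the single function $H$ is not valid.} You assert that rotation invariance of the $D^{p'}_\om$-norm in $w$, combined with $\widehat g(n)\ge0$, forces $\|G^{D^2_\om}_{g,z}\|_{D^{p'}_\om}$ to depend only on $|z|$. It does not. Writing $z=re^{i\theta}$, the $k$-th Maclaurin coefficient of $G^{D^2_\om}_{g,z}$ is
\[
\frac{1}{2k^2\om_{2k-1}}\sum_{n\ge0}\frac{(n+1)\widehat g(n+1)}{n+k+1}\,r^{n+k+1}e^{-i(n+k+1)\theta},
\]
so different $n$ carry different phases, and the functions $G^{D^2_\om}_{g,re^{i\theta}}$ for varying $\theta$ are \emph{not} rotations of one another. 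Even if one could restrict to real $z=r$, the further passage from $\sup_{0\le r<1}\|G^{D^2_\om}_{g,r}\|_{D^{p'}_\om}$ to $\|H\|_{D^{p'}_\om}$ is problematic for the upper bound: the implication ``$0\le a_k\le b_k$ for all $k$ $\Rightarrow$ $\|\sum a_kw^k\|_{H^{p'}}\le\|\sum b_kw^k\|_{H^{p'}}$'' is false in general when $p'\ne2$. The paper avoids this by never passing to a limit function. For the upper bound it keeps the full family $\{G^{D^2_\om}_{g,z}\}$, bounds each block norm by absolute values of coefficients (via $\HL_{p'}\subset H^{p'}$ when $2\le p'<\infty$, and via \cite[Proposition~9]{PRW} together with the dyadic decomposition of $D^{p'}_{p'-1}$ when $1<p'<2$), and then takes the supremum. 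For the lower bound it restricts to real $z=x\in(0,1)$, observes that the coefficients $k\mapsto\frac{1}{k+1}\sum_n\frac{(n+1)\widehat g(n+1)x^{n+k+1}}{n+k+1}$ form a non-negative \emph{decreasing} sequence, applies \eqref{Eq:old-case}, and uses Fatou's lemma as $x\to1^-$.

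\textbf{The compactness argument is incorrect.} The operator $T_{g_N}$ does \emph{not} have finite-dimensional range: for instance if $g_N(z)=z$ then $T_{g_N}(f)(z)=\int_0^z f(\zeta)\,d\zeta$, whose range is all primitives vanishing at $0$. Thus you cannot conclude compactness of $T_{g_N}$ from finite rank, and invoking the very equivalence you are proving would be circular. The paper instead shows directly that \eqref{condition} implies
\[
\lim_{R\to1^-}\sup_{z\in\D}\int_{\D\setminus\overline{D(0,R)}}\bigl|(G^{D^2_\om}_{g,z})'(\zeta)\bigr|^{p'}\om(\zeta)\,dA(\zeta)=0
\]
via dominated convergence, and then runs the standard normal-families argument to deduce compactness.
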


\begin{proof}
We begin with showing that \eqref{condition} is a sufficient condition for $T_g: D^p_{\omega}\rightarrow H^\infty$ to be bounded. By \cite[Theorem~1.1]{CPPR}, Lemma~\ref{le:dualdp}, \cite[Theorem~3.4]{PR2017} and \cite[Lemma~E]{PeR}, we deduce
	\begin{equation}\label{Eq:bd1}
	\begin{split}
	&\|T_g\|^{p'}_{D^p_{\om}\rightarrow H^\infty}\asymp\sup_{z\in\D}\|G^{D^2_{\om}}_{g,z}\|^{p'}_{D^{p'}_{\om}}\asymp
	\sup_{z\in\D}\sum_{j=0}^{\infty}2^{-j}\|\Delta_j^{\om}(G^{D^2_{\om}}_{g,z})^{\prime}\|^{p'}_{H^{p'}}\\
	&\asymp\sup_{z\in\D}\sum_{j=0}^{\infty}{2^{-j}}\left\|\sum_{k=M_j}^{M_{j+1}-1}\frac{1}{(\om_{2k+1})(k+1)}
	\left(\sum_{n=0}^{\infty}\frac{(n+1)\widehat{g}(n+1)}{n+k+2}\overline{z}^{n+k+2}\right)w^k\right\|^{p'}_{H^{p'}}\\
	&\asymp\sup_{z\in\D}\sum_{j=0}^{\infty}\frac{1}{(\om_{2M_j+1})^{p'-1}}\left\|\sum_{k=M_j}^{M_{j+1}-1}\frac{1}{k+1}
	\left(\sum_{n=0}^{\infty}\frac{(n+1)\widehat{g}(n+1)}{n+k+1}\overline{z}^{n+k+1}\right)w^k\right\|^{p'}_{H^{p'}},
	\end{split}
\end{equation}
where the last step is valid because $\om_{2x}\asymp\om_x$, $1\le x<\infty$, and $\om_{M_{j+1}}\asymp\om_{M_j}\asymp2^{-j}$, $j\in\N$, by Lemma~\ref{Eqdhat}(iii). We consider two different cases. Let first $1<p\le2$. Then $2\le p'<\infty$, and hence \eqref{Hardy2} yields
	\begin{equation*}
	\begin{split}
	\|T_g\|^{p'}_{D^p_{\om}\rightarrow H^\infty}
	&\asymp\sup_{z\in\D}\sum_{j=0}^{\infty}\frac{1}{(\om_{2M_j+1})^{p'-1}}\left\|\sum_{k=M_j}^{M_{j+1}-1}
	\frac{1}{k+1}\left(\sum_{n=0}^{\infty}\frac{(n+1)\widehat{g}(n+1)}{n+k+1}\overline{z}^{n+k+1}\right)w^k\right\|^{p'}_{H^{p'}}\\
	&\lesssim \sum_{j=0}^{\infty}\frac{1}{(\om_{2M_j+1})^{p'-1}}\sum_{k=M_j}^{M_{j+1}-1}(k+1)^{-2}
	\left(\sum_{n=0}^{\infty}\frac{(n+1)|\widehat{g}(n+1)|}{n+k+1}\right)^{p'}\\
	&\asymp\sum_{k=0}^{\infty}\frac{(k+1)^{-2}}{(\omega_{k})^{p'-1}}
	\left(\sum_{n=0}^{\infty}\frac{(n+1)|\widehat{g}(n+1)|}{n+k+1}\right)^{p'}.
	\end{split}
	\end{equation*}

If $2<p<\infty$, then \cite[Theorem~1.1]{CPPR}, Lemma~\ref{le:dualdp}, \eqref{Hardy1}, \cite[Theorem~2.1]{MatPav}, \cite[Lemma~E]{PeR}, Fubini's theorem and Lemma~\ref{Eqdhat}(iii) imply
	\begin{equation*}
	\begin{split}
	\|T_g\|^{p'}_{D^p_{\om}\rightarrow H^\infty}
	&
	\asymp\sup_{z\in\D}\int_0^1\|(G^{D^2_{\om}}_{g,z})^{\prime}_{r}\|_{H^{p'}}^{p'}\om(r)r\,dr
	\lesssim\sup_{z\in\D}\int_0^1\|(G^{D^2_{\om}}_{g,z})^{\prime}_{r}\|_{D^{p'}_{p'-1}}^{p'}\om(r)rdr\\
	&\asymp\sup_{z\in\D}\int_0^1\sum_{j=0}^{\infty}\|\Delta_j*(G^{D^2_{\om}}_{g,z})^{\prime}_{r}\|_{H^{p'}}^{p'}\om(r)dr\\
	&\asymp\sup_{z\in\D}\int_0^1\sum_{j=0}^{\infty}\left\|\sum_{k=2^{j}}^{2^{j+1}-1}\frac{r^k}{2(k+1)\om_{2k+1}}
	\left(\sum_{n=0}^{\infty}\frac{\overline{\widehat{g}(n+1)}(n+1)\overline{z}^{n+k+2}}{n+k+2}
	\right)\zeta^k\right\|^{p'}_{H^{p'}}\om(r)dr\\
	&\lesssim\sup_{z\in\D}\int_0^1\sum_{j=0}^{\infty}\frac{r^{2^j}}{2^{(j+1)p'}(\om_{2^{j+1}})^{p'}}
	\left\|\sum_{k=2^{j}}^{2^{j+1}-1}
	\left(\sum_{n=0}^{\infty}\frac{\overline{\widehat{g}(n+1)}(n+1)\overline{z}^{n+k+2}}{n+k+2}
	\right)\zeta^k\right\|^{p'}_{H^{p'}}\om(r)dr\\
	&\asymp\sup_{z\in\D}\sum_{j=0}^{\infty}\frac{1}{2^{jp'}(\om_{2^j})^{p'-1}}
	\left\|\sum_{k=2^{j}}^{2^{j+1}-1}
	\left(\sum_{n=0}^{\infty}\frac{\overline{\widehat{g}(n+1)}(n+1)\overline{z}^{n+k+2}}{n+k+2}
	\right)\zeta^k\right\|^{p'}_{H^{p'}}.
	\end{split}
	\end{equation*}
Now \cite[Proposition~9]{PRW} shows that
	$$
	\left\|\sum_{k=2^{j}}^{2^{j+1}-1}
	\left(\sum_{n=0}^{\infty}\frac{\overline{\widehat{g}(n+1)}(n+1)\overline{z}^{n+k+2}}{n+k+2}
	\right)\zeta^k\right\|^{p'}_{H^{p'}}\lesssim\|\Delta_j\|_{H^{p'}}^{p'}
	\left(\sum_{n=0}^{\infty}\frac{(n+1)|\widehat{g}(n+1)||z|^{n+2}}{n+2^{j-1}+2}\right)^{p'},
	$$
and hence \eqref{Deltanorm} yields
	\begin{align*}
	\|T_g\|^{p'}_{D^p_{\om}\rightarrow H^\infty}
	&\lesssim \sup_{z\in\D}\sum_{j=0}^{\infty}\frac{1}{2^{jp'}(\om_{2^j})^{p'-1}}\|\Delta_j\|_{H^{p'}}^{p'}
	\left(\sum_{n=0}^{\infty}\frac{(n+1)|\widehat{g}(n+1)||z|^{n+2}}{n+2^{j-1}+2}\right)^{p'}\\
	&\asymp\sum_{j=0}^{\infty}\frac{2^{j(p'-1)}}{2^{jp'}(\om_{2^j})^{p'-1}}
	\left(\sum_{n=0}^{\infty}\frac{(n+1)|\widehat{g}(n+1)|}{n+2^{j-1}+2}\right)^{p'}\\
	&\asymp\sum_{k=0}^{\infty}\frac{(k+1)^{-2}}{(\om_{k})^{p'-1}}
	\left(\sum_{n=0}^{\infty}\frac{(n+1)|\widehat{g}(n+1)|}{n+k+1}\right)^{p'}.
	\end{align*}
Thus \eqref{condition} is a sufficient condition for $T_g: D^p_{\omega}\rightarrow H^\infty$ to be bounded.

Conversely, assume $g\in\H(\D)$ such that $\widehat{g}(n)\geq0$ for all $n\in\mathbb{N}\cup\{0\}$, and $T_g:D^p_{\omega}\rightarrow H^{\infty}$ is bounded. Then \eqref{Eq:bd1} implies
	\begin{equation*}
	\begin{split}
	\|T_g\|^{p'}_{D^p_{\om}\rightarrow H^\infty}
	\gtrsim\sup_{0\leq x<1}\sum_{j=0}^{\infty}\frac{1}{(\om_{M_j})^{p'-1}}\left\|\sum_{k=M_j}^{M_{j+1}-1}
	\frac{1}{k+1}\left(\sum_{n=0}^{\infty}\frac{(n+1)\widehat{g}(n+1)}{n+k+1}{x}^{n+k+1}\right)w^k\right\|^{p'}_{H^{p'}}.
	\end{split}
	\end{equation*}
For each $x\in(0,1)$, the coefficients
	$$
	\widehat{G^{H^2}_{g,x}}(k)=\frac{1}{k+1}\sum_{n=0}^\infty \frac{(n+1)\widehat{g}(n+1)x^{n+k+1}}{n+k+1}, \quad k\in\N\cup\{0\},
	$$
form a sequence of non-negative and decreasing numbers. Therefore \eqref{Eq:old-case} implies
	$$
	\|T_g\|_{D^p_{\om}\rightarrow H^{\infty}}^{p'}\gtrsim\sum_{k=0}^{\infty}\frac{(k+1)^{-2}}{(\omega_{k})^{p'-1}}
	\left(\sum_{n=0}^{\infty}\frac{(n+1)\widehat{g}(n+1)}{n+k+1}\right)^{p'}.
	$$
Thus \eqref{condition} is satisfied. Further, the norm estimate \eqref{conditioniuy} follows from the proof above.

To complete the proof, it suffices to show that \eqref{condition} yields the compactness of $T_g: D^p_{\omega}\rightarrow H^\infty$.
We claim that
	\begin{equation}\label{compactness}
	L=\lim_{R\rightarrow1^-}\sup_{z\in\D}\int_{\D\setminus\overline{\D(0,R)}}
	|(G^{D^2_{\om}}_{g,z})^{\prime}(\zeta)|^{p'}\om(\zeta)\,dA(\zeta)=0.
	\end{equation}

If $2\le p'<\infty$, then \eqref{Hardy2}, Fubini's theorem and Lemma~\ref{Eqdhat}(iii) yield
	\begin{align*}
	L&\asymp\lim_{R\rightarrow1^-}\sup_{z\in\D}\int_R^1\|(G^{D^2_{\om}}_{g,z})^{\prime}_{r}\|_{H^{p'}}^{p'}\om(r)rdr\\
	&\lesssim\lim_{R\rightarrow1^-}\int_R^1\sum_{k=0}^{\infty}\frac{(k+1)^{-2}}{(\om_k)^{p'}}
	\left(\sum_{n=0}^{\infty}\frac{(n+1)\widehat{g}(n+1)}{n+k+1}\right)^{p'}r^{kp'}\om(r)dr\\
	&=\sum_{k=0}^{\infty}\frac{(k+1)^{-2}}{(\om_k)^{p'-1}}
	\left(\sum_{n=0}^{\infty}\frac{(n+1)\widehat{g}(n+1)}{n+k+1}\right)^{p'}\frac{1}{\om_k}\int_{R}^1r^{kp'}\om(r)dr.
	\end{align*}
Since $\frac{1}{\om_k}\int_{R}^1r^{kp'}\om(r)dr\lesssim1$ for all $k\in\N\cup\{0\}$ and $0\le R<1$, the dominated convergence theorem implies \eqref{compactness}.

If $1<p'<2$, then by \eqref{Hardy1}, together with an argument similar to the case $2<p<\infty$ in the first part of the proof and the dominated convergence theorem, we deduce \eqref{compactness}.

Let now $\{f_n\}$ be a norm bounded family of functions in $D^p_{\om}$ such that $f_n\rightarrow0$ uniformly on compact subsets of $\D$. By \eqref{compactness}, for each $\varepsilon>0$, there exists $R=R(\varepsilon)\in(0,1)$ such that
	$$
	\sup_{z\in\D}\int_{\D\setminus\overline{\D(0,R)}}
	|(G^{D^2_{\om}}_{g,z})^{\prime}(\zeta)|^{p'}\om(\zeta)\,dA(\zeta)<\varepsilon^{p'}.
	$$
Further, by the uniform convergence we may choose $N=N(\varepsilon,R)\in\N$ such that $\max\{|f_n(0)|,|f_n'(\zeta)|\}<\varepsilon$ for all $n\ge N$ and $\zeta\in\overline\D(0,R)$. Therefore \cite[(2.4)]{CPPR} and H\"{o}lder's inequality yield
	\begin{align*}
	\|T_gf_n\|_{H^{\infty}}&=\sup_{z\in\D}|\langle f_n, G^{D^2_{\om}}_{g,z}\rangle_{D^2_{\om}}|\\
	&\lesssim\sup_{z\in\D}\int_{\D}|f_n'(\zeta)||(G^{D^2_{\om}}_{g,z})^{\prime}(\zeta)|\om(\zeta)\,dA(\zeta)+|f_n(0)||g(z)-g(0)|\\
	&\lesssim\sup_{z\in\D}\left(\left(\int_{\overline{\D(0,R)}}+\int_{\D\backslash\overline{\D(0,R)}}\right)
	|f_n'(\zeta)||(G^{D^2_{\om}}_{g,z})'(\zeta)|\om(\zeta) dA(\zeta)\right)\\
	&\quad+|f_n(0)| \|g\|_{H^\infty}\\
	&\lesssim\varepsilon\left(\|(G^{D^2_{\om}}_{g,z})^{\prime}\|^{p'}_{D^{p'}_{\om}}+\sup_{n}\|f_n\|_{D^p_{\om}}+\|g\|_{H^\infty}\right)
	\lesssim\e,\quad n\ge N.
	\end{align*}
Hence $T_g:D^p_{\om}\to H^\infty$ is compact by \cite[Lemma~3.6]{Tjani}.
\end{proof}

The next result says that the spaces $\HL^{\om}_{p}$, $A^p_{\om}$ and $D^p_{\widetilde{\om}_{[p]}}$ play the same role when $T_g$ acts from one of them to $H^\infty$ in terms of boundedness and compactness, provided $g$ has non-negative Maclaurin series coefficient and $\om\in\DD$.

\begin{proposition}\label{TgHLpom}
Let $1<p<\infty$, $\omega\in\DD$, $g\in H^\infty$ with $\widehat{g}(n)\geq0$ for all $n\in\mathbb{N}\cup\{0\}$, and let $X^{\om}_{p}\in\{\HL^{\om}_{p}, A^p_{\om}, D^p_{\widetilde{\om}_{[p]}}\}$. Then $T_g:X^{\om}_p\rightarrow H^\infty$ is bounded (equivalently compact) if and only if
	\begin{equation}\label{conditionApom}
	\sum_{k=0}^{\infty}\frac{(k+1)^{p'-2}}{(\omega_{k})^{p'-1}}
	\left(\sum_{n=0}^{\infty}\frac{(n+1)\widehat{g}(n+1)}{n+k+1}\right)^{p'}<\infty.
	\end{equation}
Moreover,
	\begin{equation}\label{conditionApompoiu}
	\|T_g\|_{X_{p}^{\om}\rightarrow H^{\infty}}^{p'}\asymp\sum_{k=0}^{\infty}\frac{(k+1)^{p'-2}}{(\omega_{k})^{p'-1}}
	\left(\sum_{n=0}^{\infty}\frac{(n+1)\widehat{g}(n+1)}{n+k+1}\right)^{p'}.
	\end{equation}
\end{proposition}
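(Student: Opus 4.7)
The plan is to handle the three choices of $X^\om_p$ in sequence, observing that each yields the same condition \eqref{conditionApom}. The preliminary fact is that $\widetilde{\om}_{[p]}\in\DD$ whenever $\om\in\DD$, as verified in the proof of Proposition \ref{inclusion}.

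First I would treat $X=D^p_{\widetilde{\om}_{[p]}}$ by applying Theorem \ref{TgDpom} with $\om$ replaced by $\widetilde{\om}_{[p]}$. What is then needed is a comparison between the moments $(\widetilde{\om}_{[p]})_k=\int_0^1 r^k\widehat{\om}(r)(1-r)^{p-1}\,dr$ and $\om_k$. The upper bound $\int_r^1\widehat{\om}(s)(1-s)^{p-1}\,ds\le\widehat{\om}(r)(1-r)^p/p$ is trivial, while the matching lower bound comes from $\widehat{\om}(s)\gtrsim\widehat{\om}(r)$ on $[r,\frac{1+r}{2}]$, available because $\om\in\DD$. Combining this with Lemma \ref{Eqdhat}(iii) gives $(\widetilde{\om}_{[p]})_k\asymp\om_k(k+1)^{-p}$. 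Since $p(p'-1)=p'$, substituting this estimate transforms the condition delivered by Theorem \ref{TgDpom} precisely into \eqref{conditionApom}, with matching norm asymptotics.

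For $X=\HL^\om_p$, I would combine \cite[Theorem 1.1]{CPPR} with Lemma \ref{le:dualHLp} to obtain $\|T_g\|_{\HL^\om_p\to H^\infty}^{p'}\asymp\sup_{z\in\D}\|G^{A^2_\om}_{g,z}\|_{\HL^\om_{p'}}^{p'}$, where the test function $G^{A^2_\om}_{g,z}$ is determined from the reproducing kernel of $A^2_\om$ and has Maclaurin coefficients
	$$
	\widehat{G^{A^2_\om}_{g,z}}(k)=\frac{1}{2\om_{2k+1}}\sum_{n=0}^\infty\frac{\overline{\widehat{g}(n+1)}(n+1)\overline{z}^{n+k+1}}{n+k+1},\qquad k\in\N\cup\{0\}.
	$$
Since $\widehat{g}(n)\ge 0$, each modulus $|\widehat{G^{A^2_\om}_{g,z}}(k)|$ is non-decreasing in $|z|$, so the supremum over $z\in\D$ is realized in the limit $|z|\to 1^-$ by monotone convergence. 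Plugging the limiting values into $\|\cdot\|_{\HL^\om_{p'}}$ and using $\om_{kp'+1}\asymp\om_{2k+1}\asymp\om_k$ from Lemma \ref{Eqdhat}(iv) yields after cancellation precisely the right-hand side of \eqref{conditionApompoiu}.

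For $X=A^p_\om$, I would invoke the sandwich from Proposition \ref{inclusion}: when $1<p\le 2$ one has $D^p_{\widetilde{\om}_{[p]}}\subset A^p_\om\subset\HL^\om_p$, and the chain reverses when $2\le p<\infty$. Since the two outer spaces both obey \eqref{conditionApom} with comparable norms, $A^p_\om$ inherits the same characterization: boundedness on $A^p_\om$ forces boundedness on whichever outer space it embeds into, while \eqref{conditionApom} yields boundedness on whichever outer space contains $A^p_\om$, and hence on $A^p_\om$ itself. Compactness propagates from larger to smaller spaces along these same inclusions; for the converse, \eqref{conditionApom} directly yields compactness of $T_g:D^p_{\widetilde{\om}_{[p]}}\to H^\infty$ via Theorem \ref{TgDpom} (which resolves the $p\ge 2$ case for all three spaces), while for $1<p\le 2$ one needs an analogous direct compactness argument for $T_g:\HL^\om_p\to H^\infty$ modeled on \eqref{compactness}, phrased in terms of the $\HL^\om_{p'}$-norm of the tail of $G^{A^2_\om}_{g,z}$. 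I expect this last compactness step, together with the care required in tying both directions of the comparison $\int_r^1\widehat{\om}(s)(1-s)^{p-1}\,ds\asymp\widehat{\om}(r)(1-r)^p$ to the doubling hypothesis on $\om$, to be the main technical obstacle.
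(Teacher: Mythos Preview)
Your proposal is correct and follows essentially the same route as the paper: reduce $D^p_{\widetilde{\om}_{[p]}}$ to Theorem~\ref{TgDpom} via the moment estimate $(\widetilde{\om}_{[p]})_k\asymp\om_k(k+1)^{-p}$, handle $\HL^\om_p$ through the duality of Lemma~\ref{le:dualHLp} and the abstract criterion of \cite{CPPR}, and cover $A^p_\om$ by the sandwich of Proposition~\ref{inclusion}. The only minor differences are that the paper obtains the moment comparison directly from Lemma~\ref{Eqdhat}(iii) rather than first estimating $\widehat{\widetilde{\om}_{[p]}}$, uses Fatou's lemma rather than monotone convergence for the lower bound, and for compactness on $\HL^\om_p$ argues with a direct sequence/H\"older splitting (choosing $k_0$ so that the tail sum is small) rather than a tail-of-$G^{A^2_\om}_{g,z}$ argument; these are cosmetic.
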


\begin{proof}
Assume first that $X^\om_p=D^p_{\widetilde{\om}_{[p]}}$. By applying Lemma~\ref{Eqdhat}(iii) we deduce $(\widetilde{\om}_{[p]})_{k}\asymp\om_{k}(k+1)^{-p}$ for all $k\in\N\cup\{0\}$.
Therefore Theorem~\ref{TgDpom}, applied to $\widetilde{\om}_{[p]}$, yields
	$$
	\|T_g\|_{D^p_{\widetilde{\om}_{[p]}}\rightarrow H^{\infty}}^{p'}
	\asymp\sum_{k=0}^{\infty}\frac{(k+1)^{p'-2}}{(\omega_{k})^{p'-1}}
	\left(\sum_{n=0}^{\infty}\frac{(n+1)\widehat{g}(n+1)}{n+k+1}\right)^{p'}.
	$$
Moreover, the boundedness of $T_g: D^p_{\widetilde{\om}_{[p]}}\rightarrow H^\infty$ is equivalent to its compactness. Therefore, by Proposition~\ref{inclusion}, to finish the proof, it suffices to prove the statement for $X^\om_p=\HL^\om_p$.

By \cite[Theorem 2.2]{CPPR} and Lemma~\ref{le:dualHLp}, $T_g:\HL_p^{\om}\to H^{\infty}$ is bounded if and only if $\sup_{z\in\D}\|G^{A^2_\om}_{g,z}\|_{\HL^{\om}_{p'}}<\infty$, and further, $\|T_g\|_{\HL_{p}^{\om}\rightarrow H^{\infty}}\asymp\sup_{z\in\D}\|G^{A^2_\om}_{g,z}\|_{\HL^{\om}_{p'}}$. Obviously,
	\begin{align*}
	\sup_{z\in\D}\|G^{A^2_\om}_{g,z}\|_{\HL^{\om}_{p'}}^{p'}
	&\asymp\sup_{z\in\D}\sum_{k=0}^{\infty}\frac{(k+1)^{p'-2}}{(\om_{kp+1})^{p'-1}}\left|\sum_{n=0}^{\infty}
	\frac{(n+1)\widehat{g}(n+1)\overline{z}^{n+k+1}}{n+k+1}\right|^{p'}\\
	&\lesssim\sum_{k=0}^{\infty}\frac{(k+1)^{p'-2}}{(\omega_{k})^{p'-1}}
	\left(\sum_{n=0}^{\infty}\frac{(n+1)\widehat{g}(n+1)}{n+k+1}\right)^{p'},
	\end{align*}
and by Fatou's lemma,
	$$
	\sup_{z\in\D}\|G^{A^2_\om}_{g,z}\|_{\HL^{\om}_{p'}}^{p'}\gtrsim
	\sup_{0\leq x<1}\|G^{A^2_\om}_{g,x}\|_{\HL^{\om}_{p'}}^{p'}\asymp
	\sum_{k=0}^{\infty}\frac{(k+1)^{p'-2}}{(\omega_{k})^{p'-1}}
	\left(\sum_{n=0}^{\infty}\frac{(n+1)\widehat{g}(n+1)}{n+k+1}\right)^{p'}.
	$$
Finally, we will prove that \eqref{conditionApom} implies the compactness of $T_g:\HL_p^{\om}\to H^{\infty}$. To this end, let $\{f_n\}$ be a norm bounded family in $\HL^\om_{p}$ such that $f_n\rightarrow0$ uniformly on compact subsets of $\D$. Then, for each $\varepsilon>0$ there exists $k_0=k_0(\varepsilon)\in\mathbb{N}$ such that
	$$
	\sum_{k=k_0}^{\infty}\frac{(k+1)^{p'-2}}{\om_k^{p'-1}}\left(
	\sum_{n=0}^{\infty}\frac{(n+1)\widehat{g}(n+1)}{n+k+1}\right)^{p'}<\varepsilon^{p'}.
	$$
Furthermore, by the uniformly convergence we may choose an $n_0=n_0(\varepsilon)\in\mathbb{N}$ such that
$$
\sup_{n\geq n_0}\sum_{k=0}^{k_0-1}(k+1)^{p-2}|\widehat{f_n}(k)|^p\om_k<\varepsilon^p.
$$
Then H\"{o}lder's inequality yields
	\begin{align*}
	\|T_g(f_n)\|_{H^\infty}
	&=\sup_{z\in\D}|\langle f_n, G^{A^2_\om}_{g,z}\rangle_{A^2_\om}|\leq
	\sum_{k=0}^{\infty}|\widehat{f_n}(k)|\left(\sum_{n=0}^{\infty}\frac{(n+1)\widehat{g}(n+1)}{n+k+1}\right)\\
	&=\left(\sum_{k=0}^{k_0-1}+\sum_{k=k_0}^\infty\right)|\widehat{f_n}(k)|(k+1)^{\frac{p-2}{p}}\om_k^{\frac{1}{p}}\frac{(k+1)^{\frac{p'-2}{p'}}}{\om_k^{\frac{1}{p}}}
\left(\sum_{n=0}^{\infty}\frac{(n+1)\widehat{g}(n+1)}{n+k+1}\right)\\
	&\leq\left(\sum_{k=0}^{k_0-1}|\widehat{f_n}(k)|^p(n+1)^{p-2}\om_k\right)^{\frac1p}
\left(\sum_{k=0}^{k_0-1}\frac{(k+1)^{p'-2}}{\om_k^{p'-1}}\left(\sum_{n=0}^{\infty}
\frac{(n+1)\widehat{g}(n+1)}{n+k+1}\right)^{p'}\right)^{\frac{1}{p'}}\\
	&\quad+\left(\sum_{k=k_0}^{\infty}|\widehat{f_n}(k)|^p(n+1)^{p-2}\om_k\right)^{\frac1p}
\left(\sum_{k=k_0}^{\infty}\frac{(k+1)^{p'-2}}{\om_k^{p'-1}}\left(\sum_{n=0}^{\infty}
\frac{(n+1)\widehat{g}(n+1)}{n+k+1}\right)^{p'}\right)^{\frac{1}{p'}}\\
	&\lesssim\varepsilon,\quad n\geq n_0,
	\end{align*}
Hence $\lim_{n\rightarrow\infty}\|T_g(f_n)\|_{H^\infty}=0$. Consequently, $T_g:\HL^\om_p\rightarrow H^\infty$ is compact by \cite[Lemma~3.6]{Tjani}. The proof is complete.
\end{proof}

The proof of Theorem~\ref{compactDpless1} implies that, for each $0<p<\infty$, the boundedness of $I:D^p_{\om}\rightarrow D^1_0$ implies $g\in T(D^p_{\om},H^\infty)$ whenever $g'\in H^\infty$. In particular, each monomial satisfies $m_n\in T(D^p_{\om},H^{\infty})$ and thus $T(D^p_{\om},H^\infty)$ is not trivial. Conversely, if $0<p\le1$, then Theorems~\ref{D^pless1} and \ref{th:p=1} show that $T(D^p_{\om},H^\infty)$ being non-trivial implies the boundedness of $I:D^p_{\om}\rightarrow D^1_0$. This is no longer true if $1<p<\infty$. Indeed, by the Carleson embedding theorem \cite[Theorem~1]{PR2015}, for $1<p<\infty$, $I:D^p_{\om}\rightarrow D^1_0$ is bounded if and only the function
	$$
	B(z)=\int_{\Gamma(z)}\frac{dA(\zeta)}{\om(T(\zeta))}, \quad z\in\D\setminus\{0\},
	$$
belongs to $L^{p'}_{\om}$. Here
	$$
	\Gamma(z)=\left\{\zeta\in\D:|\arg z-\arg\zeta|<\frac12\left(1-\frac{|\zeta|}{|z|}\right)\right\}
	$$
is the lens-type region with vertex at $z\in\D\setminus\{0\}$ and $T(\zeta)=\{z\in\D:\zeta\in\Gamma(z)\}$. By Lemma~\ref{Eqdhat}(ii), we deduce
	\begin{align*}
	\|B\|^{p'}_{L^{p'}_{\om}}
	\asymp\int_0^1\left(\int_0^r\frac{r-s}{(1-s)\widehat{\om}(s)}ds\right)^{p'}\om(r)\,dr
	\gtrsim \int_0^1\frac{(1-r)^{p'}}{\whw(r)^{p'-1}}\frac{\om(r)}{\whw(r)}dr.
	\end{align*}
The standard radial weight $\om(z)=(1-|z|)^{\alpha}$, with $\frac{1}{p'-1}\le\alpha<\frac{2}{p'-1}$, satisfies
	$$
	\int_0^1\frac{(1-r)^{p'}}{\widehat{\om}(r)^{p'-1}}dr<\infty
	=\int_0^1\frac{(1-r)^{p'}}{\whw(r)^{p'-1}}\frac{\om(r)}{\whw(r)}dr
	\lesssim\|B\|^{p'}_{L^{p'}_{\om}}.
	$$
Thus the next result shows that $T(D^p_{\om},H^\infty)$ being non-trivial does not force $I:D^p_{\om}\rightarrow D^1_0$ to be bounded when $1<p<\infty$.

Theorem~\ref{intro:1p<infinity} follows from the next result.

\begin{theorem}\label{Dpconstant}
Let $1<p<\infty$, $\om\in\DD$ and $g\in H^{\infty}$. Then the following statements are equivalent:
\begin{itemize}
\item[(i)] $T(D^p_{\om}, H^{\infty})$ (equivalently $T_c(D^p_{\om}, H^{\infty})$) consists of constant functions only;
\item[(ii)] $\displaystyle \int_0^1\frac{(1-r)^{p'}}{\widehat{\om}(r)^{p'-1}}\,dr=\infty$;
\item[(iii)] $\displaystyle \sum_{k=0}^{\infty}\frac{1}{(k+1)^{2+p'}\om_k^{p'-1}}=\infty$;
\item[(iv)] $I:D^{p^*}_{\whw_{[x]}}\rightarrow D^{\frac{p^*}{p}}_y$ is unbounded, where $x=p(y-1)$, $x>y>p'$ and $0<p^*<\infty$.
\end{itemize}
\end{theorem}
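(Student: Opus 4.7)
The plan is to establish the chain of equivalences by first handling the analytic equivalence (ii)$\Leftrightarrow$(iii) via Lemma~\ref{Eqdhat}, then splitting (i)$\Leftrightarrow$(iii) into a direct sufficient direction via test monomials and a harder necessary direction via duality, and finally deducing (ii)$\Leftrightarrow$(iv) from a Carleson embedding theorem. For (ii)$\Leftrightarrow$(iii), I would invoke Lemma~\ref{Eqdhat}(iii) to obtain $\omega_k\asymp\widehat{\om}(1-1/k)$ for $k\geq 1$, and then convert the series $\sum_{k\geq 0}(k+1)^{-2-p'}\om_k^{1-p'}$ into the integral $\int_0^1(1-r)^{p'}\widehat{\om}(r)^{1-p'}\,dr$ by comparison with a monotone integrand and the change of variables $r=1-1/x$; the two conditions then hold simultaneously.

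For (i)$\Leftrightarrow$(iii), the sufficient direction is immediate: if (iii) fails, then Theorem~\ref{TgDpom} applied to the monomial $g(z)=z$ (which has non-negative coefficients) gives $\|T_g\|^{p'}_{D^p_\om\to H^\infty}\asymp\sum_{k\geq 0}(k+1)^{-2-p'}\om_k^{1-p'}<\infty$, so $T_g$ is both bounded and compact, and hence neither $T(D^p_\om,H^\infty)$ nor $T_c(D^p_\om,H^\infty)$ is trivial. Conversely, assume (iii) holds and suppose on the contrary that some non-constant $g\in H^\infty$ satisfies $T_g:D^p_\om\to H^\infty$ bounded; let $N\in\N\cup\{0\}$ be the smallest index with $\widehat{g}(N+1)\neq 0$. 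By Lemma~\ref{le:dualdp} combined with \cite[Theorem~2.2]{CPPR}, one has $\|T_g\|^{p'}_{D^p_\om\to H^\infty}\asymp\sup_{z\in\D}\|G^{D^2_\om}_{g,z}\|^{p'}_{D^{p'}_\om}$, and the decomposition norm theorem \cite[Theorem~3.4]{PR2017}, combined with \cite[Lemma~E]{PeR} as in~\eqref{Eq:bd1}, expresses this supremum as a weighted sum of $H^{p'}$-norms of dyadic blocks.

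The key lower bound then uses the estimate $|\widehat{F}(k)|\leq\|F\|_{H^{p'}}$ applied to the individual dyadic blocks, followed by an angular averaging over $z=re^{i\theta}$ (in the spirit of the proof of Theorem~\ref{D^pless1}) to decorrelate the phases coming from the different Maclaurin coefficients of $g$. This isolates the contribution of the single non-zero coefficient $\widehat{g}(N+1)$ and yields $\|T_g\|^{p'}_{D^p_\om\to H^\infty}\gtrsim|\widehat{g}(N+1)|^{p'}\sum_{k\geq k_0}(k+1)^{-2-p'}\om_k^{1-p'}=\infty$, a contradiction. The "equivalently compact" formulation of (i) then follows because, for the test symbol $g(z)=z$, Theorem~\ref{TgDpom} shows that boundedness of $T_g$ coincides with compactness, so if $T(D^p_\om,H^\infty)$ contains a non-constant function the above argument forces the monomial itself to induce a compact operator, making $T_c(D^p_\om,H^\infty)$ non-trivial as well.

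Finally, for (ii)$\Leftrightarrow$(iv), I would identify $D^{p^*}_{\whw_{[x]}}$ and $D^{p^*/p}_y$ with weighted Bergman spaces via the derivative and apply the Carleson embedding theorem \cite[Theorem~1]{PR2015} to the induced embedding $I:A^{p^*}_{\whw_{[x]}}\to A^{p^*/p}_y$. Using $\om\in\DD$ and integration by parts one computes $\widehat{\whw_{[x]}}(r)\asymp\whw(r)(1-r)^{x+1}$; under the constraints $x=p(y-1)$ with $x>y>p'$ and $0<p^*<\infty$, the resulting Carleson-type test condition collapses exactly to the divergence of $\int_0^1(1-r)^{p'}\whw(r)^{1-p'}\,dr$, yielding the equivalence with (ii). The main obstacle in the whole argument is the necessary direction (iii)$\Rightarrow$(i): because the coefficients of a general $g$ may exhibit substantial cancellation, the lower bound on $\sup_z\|G^{D^2_\om}_{g,z}\|_{D^{p'}_\om}$ cannot be read off directly from the block-decomposition, and requires a careful phase-averaging argument in order to decouple the non-negative moments of $\om$ from the complex coefficients of $g$.
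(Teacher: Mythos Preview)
Your argument for (ii)$\Leftrightarrow$(iii), for the implication ``(iii) fails $\Rightarrow$ (i) fails'' via the monomial $g(z)=z$ and Theorem~\ref{TgDpom}, and for (ii)$\Leftrightarrow$(iv) via the Carleson embedding theorem are all correct and match the paper's approach.

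The gap is in the necessary direction (iii)$\Rightarrow$(i). Your proposed lower bound extracts a \emph{single} coefficient from each dyadic block by $|\widehat F(k)|\le\|F\|_{H^{p'}}$, and then a single Fourier mode $n=N$ by angular averaging. This double single-coefficient extraction is too lossy: after carrying out both steps you obtain
\[
\|T_g\|_{D^p_\om\to H^\infty}^{p'}\gtrsim|\widehat g(N+1)|^{p'}\sum_{j}\frac{1}{\om_{M_j}^{\,p'-1}M_j^{\,2p'}},
\]
whereas (iii) is equivalent to the divergence of $\sum_{j}\om_{M_j}^{1-p'}M_j^{-(1+p')}$. Since $2p'>1+p'$ for $p'>1$, your series is genuinely smaller; for instance, with the standard weight $\om(z)=(1-|z|)^{\beta}$ and $\beta=\frac{2}{p'-1}$ one has $\sum_k(k+1)^{-2-p'}\om_k^{1-p'}=\infty$ while your lower bound reduces to a convergent geometric series. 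So the argument as written does not force $\|T_g\|_{D^p_\om\to H^\infty}=\infty$.

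The paper recovers the missing factor $M_j^{\,p'-1}$ by treating the two ranges of $p'$ separately. For $1<p'\le2$ it bypasses the block decomposition altogether and uses the Hardy--Littlewood embedding $\|(G^{D^2_\om}_{g,z})'\|_{\HL^{p'}_\om}\lesssim\|G^{D^2_\om}_{g,z}\|_{D^{p'}_\om}$ (Proposition~\ref{inclusion}(i)), which keeps \emph{all} coefficients with the correct weight $(k+1)^{-2}\om_k^{1-p'}$. For $2\le p'<\infty$ the paper passes to $D^{p'}_{p'-1}$, decomposes into genuine dyadic blocks, applies the M.~Riesz projection in the $z$-variable to isolate the mode $n=N$, and then invokes the sharp estimate $\big\|\sum_{k=2^j}^{2^{j+1}-1}\frac{e^{ik\theta}}{N+k+2}\big\|_{H^{p'}}^{p'}\asymp 2^{-j}$ from \cite[Lemma~E]{PeR} and \cite[Lemma~2.7]{CPPR}; this is of order $2^{-j}$ rather than the $2^{-jp'}$ that a single-coefficient bound would give, and that extra factor of $2^{j(p'-1)}$ is exactly what is needed.
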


\begin{proof}
By Lemma~\ref{Eqdhat}(iii), the moments of $\om$ satisfy $\om_x\asymp\widehat{\om}(1-\frac{1}{x+1})$ for all $x\in[0,\infty)$, and hence
	\begin{equation*}
	\begin{split}
	\sum_{k=0}^{\infty}\frac{1}{(k+1)^{2+p'}\om_k^{p'-1}}
	&\asymp\int_0^\infty\frac{dx}{(x+1)^{2+p'}\om_x^{p'-1}}
	\asymp\int_0^\infty\frac{dx}{(x+1)^{2+p'}\widehat{\om}\left(1-\frac1{x+1}\right)^{p'-1}}\\
	&=\int_0^1\frac{(1-r)^{p'}}{\widehat{\om}(r)^{p'-1}}\,dr.
	\end{split}
	\end{equation*}
Thus (ii) and (iii) are equivalent.

If (iii) does not hold, then Theorem~\ref{TgDpom} shows that the identity mapping $z\mapsto z$ belongs to $T_c(D^p_{\om},H^\infty)$. Therefore (i) implies (iii).

We next show that (iii) implies (i). Assume on the contrary to (i) that there exists a non-constant $g\in T(D^p_{\om}, H^{\infty})$. Then $\widehat{g}(N+1)\ne0$ for some $N\in\N\cup\{0\}$. We consider two cases. If $1<p'\le2$, then \cite[Theorem 1.1]{CPPR}, Proposition~\ref{inclusion}(i), and \eqref{Hardy1} yield
	\begin{align*}
	\infty&>\|T_g\|^{p'}_{D^p_\om\rightarrow H^\infty}
	\asymp\sup_{z\in\D}\|G^{D^2_\om}_{g,z}\|^{p'}_{D^{p'}_{\om}}\gtrsim
	\sup_{z\in\D}\|(G^{D^2_\om}_{g,z})^{\prime}\|^{p'}_{\HL_{p'}^\om}\\
	&\asymp\sup_{z\in\D}\sum_{k=0}^{\infty}\frac{(k+1)^{-2}}{\om_k^{p'-1}}\left|\sum_{n=0}^{\infty}
	\frac{\overline{\widehat{g}(n+1)}(n+1)\overline{z}^{n+k+2}}{n+k+2}\right|^{p'}\\
	&=\sup_{0\leq r<1}\sup_{\theta\in[0,2\pi)}\sum_{k=0}^{\infty}\frac{(k+1)^{-2}r^{(k+2)p'}}
	{\om_k^{p'-1}}\left|\sum_{n=0}^{\infty}\frac{{\widehat{g}(n+1)}(n+1){r}^{n}e^{in\theta}}{n+k+2}\right|^{p'}\\
	&\geq\sup_{0\leq r<1}\frac{1}{2\pi}\sum_{k=0}^{\infty}\frac{(k+1)^{-2}r^{(k+2)p'}}{\om_k^{p'-1}}
	\int_0^{2\pi}\left|\sum_{n=0}^{\infty}\frac{{\widehat{g}(n+1)}(n+1){r}^{n}e^{in\theta}}{n+k+2}\right|^{p'}d\theta\\
	&\gtrsim\sup_{0\leq r<1}\sum_{k=0}^{\infty}\frac{(k+1)^{-2}r^{(k+2)p'}}{\om_k^{p'-1}}
	\left(\frac{|\widehat{g}(N+1)|(N+1){r}^{N}}{N+k+2}\right)^{p'}(N+1)^{p'-2}\\
	&\asymp\sum_{k=0}^{\infty}\frac{1}{(k+1)^{2+p'}\om_k^{p'-1}}.
	\end{align*}
Observe that, in this case $N$ is irrelevant but by and large, using the boundedness of the M. Riesz projection in the last inequality we can get a better estimate. Anyhow, this contradicts (iii), and thus (i) is satisfied.

Let now $2\le p'<\infty$. Then, by \eqref{Hardy2}, \cite[Theorem 2.1]{MatPav}, two consecutive applications of \cite[Lemma~E]{PeR}, Lemma~\ref{Eqdhat}(iv) and Fubini's theorem, we deduce
	\begin{equation*}
	\begin{split}
	\infty&>\|T_g\|^{p'}_{D^p_{\om}\rightarrow H^\infty}\asymp\sup_{z\in\D}\|G^{D^2_{\om}}_{g,z}\|^{p'}_{D^{p'}_{\om}}\asymp
	\sup_{z\in\D}\int_0^1\|(G^{D^2_{\om}}_{g,z})^{\prime}_{r}\|_{H^{p'}}^{p'}\om(r)rdr\\
	&\gtrsim\sup_{z\in\D}\int_0^1\|(G^{D^2_{\om}}_{g,z})^{\prime}_{r}\|_{D^{p'}_{p'-1}}^{p'}\om(r)rdr\\
	&\asymp\sup_{z\in\D}\int_0^1\sum_{j=0}^{\infty}\|\Delta_j*(G^{D^2_{\om}}_{g,z})^{\prime}_{r}\|_{H^{p'}}^{p'}\om(r)dr\\
	&\asymp\sup_{z\in\D}\int_0^1\sum_{j=0}^{\infty}\left\|\sum_{k=2^{j}}^{2^{j+1}-1}\frac{r^k}{2(k+1)\om_{2k+1}}
	\left(\sum_{n=0}^{\infty}\frac{\overline{\widehat{g}(n+1)}(n+1)\overline{z}^{n+2}}{n+k+2}
	\right)(|z|\zeta)^k\right\|^{p'}_{H^{p'}}\om(r)dr\\
	&\gtrsim\sup_{z\in\D}\int_0^1\sum_{j=0}^{\infty}\frac{(r|z|)^{2^{j+1}}}{2^{(j+1)p'}(\om_{2^{j+1}})^{p'}}
	\left\|\sum_{k=2^{j}}^{2^{j+1}-1}
	\left(\sum_{n=0}^{\infty}\frac{\overline{\widehat{g}(n+1)}(n+1)\overline{z}^{n+2}}{n+k+2}
	\right)\zeta^k\right\|^{p'}_{H^{p'}}\om(r)dr\\
	&\asymp\sup_{z\in\D}\sum_{j=0}^{\infty}\frac{|z|^{2^{j+1}}}{2^{(j+1)p'}(\om_{2^{j+1}})^{p'-1}}
	\left\|\sum_{k=2^{j}}^{2^{j+1}-1}
	\left(\sum_{n=0}^{\infty}\frac{\overline{\widehat{g}(n+1)}(n+1)\overline{z}^{n+2}}{n+k+2}
	\right)\zeta^k\right\|^{p'}_{H^{p'}}
	\end{split}
	\end{equation*}
from which Fubini's theorem and M.~Riesz projection theorem yield
	\begin{align*}
	\infty&>\sup_{0\le s<1}\sum_{j=0}^{\infty}\frac{s^{2^{j+1}}}{2^{(j+1)p'}(\om_{2^{j+1}})^{p'-1}}
	\int_0^{2\pi}\int_0^{2\pi}\left|\sum_{k=2^{j}}^{2^{j+1}-1}
	\left(\sum_{n=0}^{\infty}\frac{\overline{\widehat{g}(n+1)}(n+1)(se^{it})^{n+2}}{n+k+2}
	\right)e^{ik\theta}\right|^{p'}dt~d\theta\\
	&=\sup_{0\leq s<1}\sum_{j=0}^{\infty}\frac{s^{2^{j+1}}}{2^{(j+1)p'}(\om_{2^{j+1}})^{p'-1}}
	\int_0^{2\pi}\int_0^{2\pi}\left|\sum_{n=0}^{\infty}\overline{\widehat{g}(n+1)}(n+1)
	\left(\sum_{k=2^{j}}^{2^{j+1}-1}\frac{e^{ik\theta}}{n+k+2}
	\right)(se^{it})^{n+2}\right|^{p'}d\theta~ dt\\
	&\gtrsim\sup_{0\leq s<1}\sum_{j=0}^{\infty}\frac{s^{2^{j+1}}}{2^{(j+1)p'}(\om_{2^{j+1}})^{p'-1}}
	\int_0^{2\pi}\int_0^{2\pi}\left|\overline{\widehat{g}(N+1)}(N+1)
	\left(\sum_{k=2^{j}}^{2^{j+1}-1}\frac{e^{ik\theta}}{N+k+2}
	\right)(se^{it})^{N+2}\right|^{p'}d\theta~ dt\\
	&\asymp\sum_{j=0}^{\infty}\frac{1}{2^{(j+1)p'}(\om_{2^{j+1}})^{p'-1}}
	\int_0^{2\pi}\left|\sum_{k=2^{j}}^{2^{j+1}-1}\frac{e^{ik\theta}}{N+k+2}\right|^{p'}d\theta.
	\end{align*}
Now, by \cite[Lemma~E]{PeR} and \cite[Lemma~2.7]{CPPR}, with $M_n=2^n$ and $M_{n+1}=2^{n+1}$, we deduce
	$$
	\int_0^{2\pi}
	\left|\sum_{k=2^{j}}^{2^{j+1}-1}\frac{e^{ik\theta}}{N+k+2}\right|^{p'}d\theta\asymp 2^{-(j+1)},\quad j\in\N.
	$$
Consequently, since $\om\in\DD$, we finally obtain
	$$
	\infty>\|T_g\|^{p'}_{D^p_{\om}}
	\gtrsim\sum_{j=0}^{\infty}\frac{1}{2^{(j+1)(p'+1)}(\om_{2^{j+1}})^{p'-1}}
	\asymp\sum_{k=0}^{\infty}\frac{1}{(k+1)^{p'+2}\om_k^{p'-1}},
	$$
which is contradicts (iii), and thus (i) is satisfied.

We complete the proof by showing that (ii) and (iv) are equivalent. By \cite[Theorem~1]{PR2015}, $I:D^{p^*}_{\whw_{[x]}}\rightarrow D^{\frac{p^*}{p}}_y$ is bounded if and only if
	\begin{equation*}
	\begin{split}
	\infty
	&>\int_{\D}\whw(z)(1-|z|)^x\left(\int_{\Gamma(z)}\frac{(1-|\zeta|)^y}{\whw_{[x]}(S(\zeta))}\,dA(\zeta)\right)^{p'}dA(z)\\
	&\asymp\int_0^1\whw(r)(1-r)^x\left(\int_0^r\frac{(r-s)(1-s)^y}{\whw(s)(1-s)^{x+2}}s\,ds\right)^{p'}dr
	=I_\om.
	\end{split}
	\end{equation*}
On one hand, by Lemma~\ref{Eqdhat}(ii), there exists a constant $\beta=\beta(\om)>0$ such that
	\begin{align*}
	I_\om&\gtrsim\int^1_0\frac{(1-r)^{x+\beta p'}}{\whw(r)^{p'-1}}\left(\int_0^r\frac{r-s}{(1-s)^{\beta+x+2-y}}s\,ds\right)^{p'}dr
	\asymp\int_0^1\frac{(1-r)^{x+p'y-xp'}}{\whw(r)^{p'-1}}dr.
	\end{align*}
On the other hand,
	\begin{align*}
	I_\om
	&\lesssim\int_0^1\frac{(1-r)^x}{\whw(r)^{p'-1}}\left(\int_0^r\frac{r-s}{(1-s)^{x+2-y}}s\,ds\right)^{p'}dr
	\asymp\int_0^1\frac{(1-r)^{x+p'y-xp'}}{\whw(r)^{p'-1}}dr.
\end{align*}
Since $x=p(y-1)$, (ii) and (iv) are equivalent.
\end{proof}

We finish the section with observations on $A^p_\om$ with $1<p<\infty$ and $\om\in\DD$. The method of proof we employed for $D^p_\om$ certainly works also for $A^p_\om$. The analogue of Theorem~\ref{TgDpom} states that
	\begin{equation*}
	\|T_g\|_{A^p_{\om}\rightarrow H^{\infty}}^{p'}
	\asymp\sum_{k=0}^{\infty}\frac{(k+1)^{p'-2}}{(\omega_k)^{p'-1}}
	\left(\sum_{n=0}^{\infty}\frac{(n+1)\widehat{g}(n+1)}{n+k+1}\right)^{p'},
	\end{equation*}
while the argument used to obtain Theorem~\ref{Dpconstant} shows that $T(A^p_{\om}, H^{\infty})$ (equivalently $T_c(A^p_{\om}, H^{\infty})$) consists of constant functions only if and only if
	\begin{equation}\label{fiwehlwfiehg}
	\int_0^1\frac{dr}{\widehat{\om}(r)^{p'-1}}=\infty.
	\end{equation}
This last result extends \cite[Theorem~1.3]{CPPR} from the setting of regular weight to the whole doubling class $\DD$. In the case $1<p\le 2$ we can actually easily do better. Namely, if $1<p<\infty$, $\omega\in\DD$ and $g\in H^\infty$, then Theorem~\ref{Dpconstant} implies that $T(D^p_{\widetilde{\om}_{[p]}},H^\infty)$ consists of constant functions only if and only if
	$$
	\int_0^1\frac{(1-r)^{p'}}{\left(\int_r^1\widetilde{\om}_{[p]}(t)\,dt\right)^{p'-1}}\,dr=\infty.
	$$
Since Lemma~\ref{Eqdhat}(ii) yields $\int_r^1\widehat{\om}(t)(1-t)^{p-1}\,dt\asymp\widehat{\om}(r)(1-r)^p$ for all $0\le r<1$, it follows that this condition is equivalent to \eqref{fiwehlwfiehg}. Further, since $D^p_{\widetilde{\om}_{[p]}}$ might be strictly smaller than $A^p_\om$ when $\om\in\DD\setminus\DDD$ by Propositions~\ref{inclusion} and~\ref{H-A-different}, this is indeed an improvement as claimed above. To judge whether or not we may replace $A^p_\om$ by $\HL^{\om}_{p}$ in the case $2<p<\infty$ is left as a task for an interested reader.

\end{document}